\documentclass[11pt]{article}

\usepackage[hidelinks]{hyperref}
\hypersetup{
  colorlinks   = true, 
  urlcolor     = blue, 
  linkcolor    = blue, 
  citecolor   = red 
}
\usepackage{amsmath,amsthm,amssymb}
\usepackage{graphicx}
\usepackage{bbm}

\usepackage{xcolor}
\usepackage[margin=1in]{geometry} 
\usepackage{enumerate,mathtools,mathrsfs,bm,graphicx}
\usepackage[numbers, super]{natbib}
\usepackage[hidelinks]{hyperref}
\newcommand{\N}{\mathbb{N}}									
\newcommand{\Z}{\mathbb{Z}}

\newcommand{\R}{\mathbb{R}}

\newcommand{\E}{\mathbb E}
\newcommand{\PP}{\mathcal P}
\newcommand{\GG}{\mathcal G}
\newcommand{\Gt}{\widetilde{\mathcal G}}
\newcommand{\BB}{\mathcal B}
\newcommand{\OO}{\mathcal O}
\newcommand{\T}{\mathbb T}
\DeclareMathOperator{\curl}{curl}
\DeclareMathOperator{\divh}{div_h}
\newcommand{\vertiii}[1]{{\left\vert\kern-0.25ex\left\vert\kern-0.25ex\left\vert #1 
    \right\vert\kern-0.25ex\right\vert\kern-0.25ex\right\vert}}
    
\newcommand{\inner}[2]{\left\langle #1, #2 \right\rangle}
\newcommand{\norm}[1]{\left\Vert #1 \right\Vert}

\newcommand{\Gi}{\mathcal{G}_{i}}

\newtheorem{theorem}{Theorem}[section]

\newtheorem{lemma}[theorem]{Lemma}
\newtheorem{proposition}[theorem]{Proposition}

\newtheorem{definition}[theorem]{Definition}

\begin{document}
	\title{Ekman Boundary Layers Under Transport Noise}
	\author{Daniel Goodair\footnote{daniel.goodair@epfl.ch} \qquad Szymon Sobczak\footnote{szymon.sobczak@epfl.ch}}
	\date{\today} 
	\maketitle
\setcitestyle{numbers}	
\thispagestyle{empty}
\begin{abstract}
We consider a 3D rotating Navier-Stokes equation with transport-stretching noise, posed between two horizontal plates, and study the joint limit of vanishing vertical viscosity and rapid rotation. Our driving noise depends only on the horizontal coordinates and its vertical component vanishes with the viscosity. Provided that the initial data is purely horizontal, we construct martingale weak solutions which converge in $L^2_{\omega}L^\infty_tL^2_x$ to the layered strong solution of a 2D stochastic Navier-Stokes equation with damping. The damping coefficient is dependent on the limit of the ratio between vertical viscosity and inverse rotation rate.

\end{abstract}
	
\tableofcontents
\thispagestyle{empty}
\newpage

\setcounter{page}{1}

\section{Introduction} \label{section introduction}

This work concerns the joint limit of vanishing vertical viscosity and rapid rotation in a 3D stochastic Navier-Stokes equation. We consider a divergence-free solution $u$ of the equation 
\begin{equation}  \label{Strat equation u}
    du_t = - B(u_t, u_t) dt - A_h u_t\, dt - \nu A_{z} u_t\, dt  - \mathcal{P} \frac{e_3 \wedge u_t}{\varepsilon}dt- \mathcal{P} \mathcal{G}^h u_t \circ d\mathcal{W}_t -  \nu^{\alpha} \mathcal{P} \mathcal{G}^z u_t \circ d\mathcal{W}_t,
\end{equation}
which represents the velocity of a fluid, posed on a domain $\mathcal{O} = \T^2 \times (0,1)$ and supplemented with the no-slip boundary condition $u = 0$ on $\partial \mathcal{O}$. The projector $\mathcal{P}$ is not exactly the classical Leray Projector, but to accommodate the natural conditions on $\mathcal{O}$, it is the orthogonal projection onto divergence-free vector fields with zero normal component on the physical boundary $\partial\mathcal{O}$ and null horizontal mean. Here and throughout, for $(x,y,z) \in \mathcal{O}$, we refer to the $x$ and $y$ directions as horizontal and the $z$ direction as vertical. Where numerical designation is convenient, the coordinates are ordered $(x_1, x_2, x_3)$. In (\ref{Strat equation u}),  $B(u_t,u_t) = \mathcal{P}\left(\left(u_t \cdot \nabla\right)u_t\right)$ is the nonlinear convective term, $A_h = -\mathcal{P}\sum_{j=1}^2 \partial_j^2$ is the horizontal Stokes Operator and $A_z = -\mathcal{P}\partial_3^2$ is the vertical Stokes Operator with turbulent vertical viscosity $\nu$. Using the superscript $j$ to denote the $j^{\textnormal{th}}$ component mapping of the vector field, the rotational term modelling Coriolis force is explicitly $e_3 \wedge u_t = (-u_t^2, u_t^1, 0)$ with frequency modulated by the constant $\frac{1}{\varepsilon}$.\\ 

In the Stratonovich stochastic integrals, $\mathcal{W}$ denotes a Cylindrical Brownian Motion acted upon by operators $\mathcal{G}^h$, $\mathcal{G}^z$ in the sense that
$$\mathcal{G}^h u_t \circ d\mathcal{W}_t = \sum_{i=1}^\infty \mathcal{G}^h_i u_t \circ dW^i_t, \qquad \mathcal{G}^z u_t \circ d\mathcal{W}_t = \sum_{i=1}^\infty \mathcal{G}^z_i u_t \circ dW^i_t $$
where $(W^i)$ is a collection of independent standard Brownian Motions comprising $\mathcal{W}$, along with pre-assigned spatial correlation functions $(\xi_i)$ with respect to which $(\mathcal{G}^h_i)$, $(\mathcal{G}^z_i)$ are defined. Each $\xi_i: \T^2 \rightarrow \R^3$, referred to as a 2D-3C vector field due to its two dimensional dependence with three components. These vector fields are smooth and divergence-free, and although $\xi_i^3$ is independent of $z$ it is not assumed trivial; in particular, the typical compatibility condition $\xi_i \cdot \underline{n} = 0$ on $\partial \mathcal{O}$ is lost, where $\underline{n}$ is the outward unit normal vector. The operators $\mathcal{G}^h_i$, $\mathcal{G}^z_i$ are defined by
\begin{equation} \label{lead to}\mathcal{G}_i^hu_t = \sum_{j=1}^2\left(\xi_i^j\partial_ju_t + u_t^j\nabla \xi_i^j\right), \qquad \mathcal{G}_i^zu_t = \xi_i^3\partial_3u_t + u_t^3\nabla \xi_i^3. \end{equation}
The power $\alpha$ in the viscous scaling of the noise satisfies $\frac{1}{2} < \alpha$, although we accommodate the case $\alpha = \frac{1}{2}$ if the ratio $\frac{\nu}{\varepsilon}$ approaches zero. We consider the convergence of solutions of (\ref{Strat equation u}) to the two dimensional equation
\begin{equation} \label{strat w}
    dw_t = - B_h(w_t, w_t) dt - A_h w_t\, dt  - \sqrt{2\beta}w_t \, dt  - \mathcal{P}_h \mathcal{G}^h w_t \circ d\mathcal{W}_t
\end{equation}
where $w:\T^2 \rightarrow \R^2$ is divergence-free and of zero-mean, $\mathcal{P}_h$ is the orthogonal projection in this 2D space onto divergence-free and zero-mean vector fields, and $B_h$ is the corresponding 2D nonlinear convective term $B_h(w_t,w_t) = \mathcal{P}_h\left(\left(w_t \cdot \nabla\right)w_t\right)$. An additional damping term is present, with intensity $0 \leq \beta < \infty$. We identify $w$ with a 3D vector field by its trivial extension. Our main result is the construction of martingale weak solutions of (\ref{Strat equation u}) which converge, as $\nu, \varepsilon \longrightarrow 0$ and $\frac{\nu}{\varepsilon} \longrightarrow \beta$, to the strong solution of (\ref{strat w}) in $L^2_{\omega}L^\infty_tL^2_x$. The precise statement can be found in Theorem \ref{main result}.

\subsection{Deterministic Theory}

In the absence of noise, equation (\ref{Strat equation u}) is a classical model in geophysical fluid mechanics often referred to as the Navier-Stokes-Coriolis equation. The model has two distinctions from the classical Navier-Stokes equation, namely the anisotropic viscosity and presence of rotation. These features work in tandem. Here, we speak of the viscosity not as the molecular kinematic viscosity but rather a turbulent viscosity, measured for example by the speed of diffusion of tracers. The rotational term models the Coriolis force, which becomes significant in the large scale problems of geophysical fluid mechanics. The Coriolis force penalises vertical diffusion, so that the vertical viscosity is much smaller than the horizontal viscosity. Therefore, the horizontal viscosity is kept constant compared to the smaller vertical viscosity $\nu$. In physical situations, $\nu$ is of the same order as $\varepsilon$ and the arising boundary layer scales like $\sqrt{\nu\varepsilon}$; it is referred to as the Ekman boundary layer, in this case of rotating fluids. Due to the same order smallness of the physical parameters $\nu$ and $\varepsilon$, the limit $\nu \rightarrow 0$, $\varepsilon \rightarrow 0$ with $\frac{\nu}{\varepsilon}$ convergent to some non-trivial $\beta$ is thus the natural asymptotic regime for deriving reduced models for applications. For a more complete discussion on the motivation of the problem, we refer the reader to [\cite{chemin2006mathematical}, \cite{greenspan1969theory}, \cite{pedlosky2013geophysical}].\\

Mathematically, the problem is particularly interesting due to the open question of the inviscid limit. Whether or not weak solutions of the Navier-Stokes equations with no-slip boundary conditions converge, as the viscosity is taken to zero, to the strong solution of the Euler equation remains one of the outstanding problems of mathematical fluid mechanics. Positive results have been limited to very specific cases regarding analyticity of initial data or structure of the flow [\cite{lopes2008vanishing1}, \cite{lopes2008vanishing}, \cite{sammartino998zero}, \cite{sammartino1998zero}], whilst conditional results such as [\cite{kato1984remarks}, \cite{kelliher2007kato}, \cite{wang2001kato}] characterise the convergence by energy dissipation in a boundary layer which is not known to hold in general. Therefore, variants of the problem which have a solution hold particular interest.\\

Grenier and Masmoudi in [\cite{grenier1997ekman}] first proved the deterministic version of our result. There is a strong intuition that as the rotation in the horizontal plane dominates the dynamics, the solution will converge towards something two dimensional. For a fixed viscosity and on the full space this is exactly what occurs, as shown in [\cite{chemin2002anisotropy}]. The mechanism of proof is dispersion; this is in contrast to our domain $\mathcal{O}$, where the fluid is constrained by the horizontal plates, and the solution simply approaches zero as also shown in [\cite{grenier1997ekman}]. This phenomenon is predicted by the Taylor-Proudman theorem, which asserts that a fluid under rapid rotation will move in `Taylor-Proudman Columns', such that the velocity is independent of $z$. Due to the no-slip boundary condition, $u(x,y,0) = u(x,y,1) = 0$, so $z$ independence of $u$ implies that it must be trivial. To recover a non-trivial limit, $\nu$ must also tend to zero with $\varepsilon$ such that $\frac{\nu}{\varepsilon}$ converges to a finite $\beta$. The singular behaviour of the fluid near the boundary for small viscosity disrupts the Taylor-Proudman Columns, and the Ekman boundary layers describe the balance between these competing phenomena. This balance produces `Ekman pumping and suction', leading to a `spin-down' effect which is modelled by the damping term in the limit equation. We again refer to [\cite{chemin2006mathematical}, \cite{greenspan1969theory}, \cite{pedlosky2013geophysical}] for a more detailed description of these phenomena.\\

The analysis of [\cite{grenier1997ekman}] relies on the construction of a boundary corrector $\mathcal{B}$ such that $w + \mathcal{B}$ is zero on the boundary, whilst $\mathcal{B}$ has $L^2_x$ norm vanishing with $\nu$ and $\varepsilon$. An integration by parts is now facilitated in energy estimates on $u - w - \mathcal{B}$, and the result is achieved through a careful analysis of the many terms involved. The damping term explicitly appears out of the construction of $\mathcal{B}$. For the limit equation to be 2D then the initial data is taken to be purely horizontal, however Masmoudi obtained a further convergence result in the case of three dimensional initial data in [\cite{masmoudi2000ekman}]. The limit equation is much more involved, and we only consider the case of horizontal initial data in this paper. Let us just mention that on $\R^3$, for fixed $\nu$ and vanishing $\varepsilon$, with three dimensional initial data then solutions converge to a 2D-3C Navier-Stokes equation as predicted by the Taylor-Proudman theorem (see [\cite{chemin2002anisotropy}]).

\subsection{Structure of the Noise} \label{subs structure}

At its core, the transport-stretching noise appearing in our equations follows the principle of \textit{Stochastic Advection by Lie Transport} introduced by Holm in [\cite{holm2015variational}]. Typically, this yields a Stratonovich integral in the velocity equation of fluid flow, given by 
$$\sum_{i=1}^\infty \mathcal{G}_i u_t \circ dW^i_t, \qquad  \mathcal{G}_iu_t = \sum_{j=1}^3\left(\xi_i^j\partial_ju_t + u_t^j\nabla \xi_i^j\right).$$
We refrain from an attempt to review the now substantial literature motivating transport noise in fluid dynamics, owing to developments in turbulence modelling, geometric mechanics, model reduction and regularisation by noise; one can see [\cite{chapron2023stochastic}, \cite{flandoli2023stochastic}] for a survey of the topic.\\

Whilst the Stochastic Advection by Lie Transport noise gives the model its foundation, alterations must be made to be suitable for our problem. As motivated in the previous subsection, we are investigating the convergence of solutions towards a two dimensional flow. To maintain this phenomenon, the spatial correlation functions appearing in the limit equation should be two dimensional. This motivates our choice for each $\xi_i$ to be independent of $z$; whilst we could consider approximations to $\xi_i$ which depend on $z$, we feel that the technical complications obfuscate the main ideas without adding any insight. A far more significant consideration is the value of $\xi_i^3$. Whilst this must be scaled to approach zero for consistency of the 2D limit equation, the way in which this is done has meaningful consequences on the dynamics and Ekman boundary layer.\\

To illustrate the idea, let us consider the It\^{o}-Stratonovich corrector appearing out of this noise; following the rigorous results of [\cite{goodair2025stratonovich}], then at least formally, the Stratonovich integral has the expression
$$-\mathcal{G}_i u_t \circ dW^i_t = -\mathcal{G}_i u_t  dW^i_t + \frac{1}{2}\mathcal{G}_i^2u_t dt.$$
At this level we are ignoring the influence of the projector or pressure and simply argue heuristically. In the It\^{o}-Stratonovich corrector, one can isolate the top order term in the vertical direction as
$$(\xi_i^3)^2\partial_3^2u_tdt.$$
Of course this is of the same order as the vertical part of the Laplacian, whose vanishing limit causes the formation of a boundary layer and damping in the limit equation. If we were to scale $\xi_i^3$ by $\nu^{\frac{1}{2}}$, the top order term would then be
$$\nu(\xi_i^3)^2\partial_3^2u_tdt$$
which matches the rate of the vertical Laplacian, therefore influencing the Ekman boundary layer and limit equation in a non-negligible way. One might anticipate that the interaction between this term and the Coriolis force produces additional damping in the limit equation, totalling
$$\sqrt{2\beta\left(1 + \frac{1}{2}\sum_{i=1}^\infty (\xi_i^3)^2\right)}w_tdt.$$
However, even an ansatz of the martingale's contribution to the limit equation is difficult. One would need a suitable theory of boundary layer expansions under transport noise, where at least to us and for the time being, it seems unclear how to match the martingale term. This direction remains of interest for the future.\\

In the present paper, we only treat this scaling in the case where $\beta = 0$. There, the vanishing viscosity overpowers the rapid rate of rotation and no damping is observed. In some sense this is the most challenging case, as the vanishing viscosity limit at the boundary could produce blow-up, which is balanced out by the regularising effect of rapid rotation; dominance of the vanishing viscosity thus appears as the scariest case. However, it kills the precise dynamics contributing to damping which are difficult to understand in the critical scaling stochastic case. Instead, when $0 < \beta < \infty$, we scale $\xi_i^3$ by $\nu^{\alpha}$ for some $\frac{1}{2} < \alpha$. This sends the corresponding noise term to zero \textit{just} quickly enough to leave no footprint in the limit equation.\\

Therefore, our noise is introduced into the 3D equation following Stochastic Advection by Lie Transport but for the scaled, effective spatial correlation functions 
\begin{equation} \label{effective spatial correlation} \tilde{\xi}_i  = \left(\xi_i^1, \xi_i^2, \nu^{\alpha}\xi_i^3\right)\end{equation}
which can alternatively be expressed as
$$\xi_i^h + \nu^{\alpha}\xi_i^z$$
where $\xi_i^h = \left(\xi_i^1, \xi_i^2, 0\right)$ and $\xi_i^z = \left(0, 0, \xi_i^3\right)$. Generalising the noise operator for vector fields $\phi$, $f$ by
\begin{equation} \label{definition of G} \mathcal{G}_{\phi}f \coloneqq \sum_{j=1}^3\left(\phi^j\partial_jf + f^j\nabla \phi^j\right),\end{equation}
then our desired stochastic term is $\sum_{i=1}^\infty \mathcal{G}_{\tilde{\xi_i}}u_t \circ dW^i_t$ where
\begin{align*}
  \mathcal{G}_{\tilde{\xi_i}}u_t &= \sum_{j=1}^3\left(\tilde{\xi}_i^j\partial_ju_t + u_t^j\nabla \tilde{\xi}_i^j\right)
  \\&= \sum_{j=1}^2\left(\xi_i^j\partial_ju_t + u_t^j\nabla \xi_i^j\right) + \nu^{\alpha}\left(\xi_i^3\partial_3u_t + u_t^3\nabla \xi_i^3\right)\\
  &= \mathcal{G}_i^hu_t + \nu^{\alpha}\mathcal{G}_i^zu_t.
\end{align*}
Thus we arrive at exactly the stochastic term of (\ref{Strat equation u}). We remark that $\mathcal{G}_i^h = \mathcal{G}_{\xi_i^h}$ and $\mathcal{G}_i^z = \mathcal{G}_{\xi_i^z}$, hence the horizontal and vertical superscripting on $\mathcal{G}_i^h$ and $\mathcal{G}_i^z$ refers to transport and stretching along the horizontal and vertical components, respectively, of the spatial correlation functions. Indeed, this decomposition of $\xi_i$ precisely isolates the directional derivatives of $u$; scaling of the vertical derivative is not an artificial choice for the analysis, but rather a consequence of scaling the underlying spatial correlations. We remark that a similar splitting was used to study an anisotropic inviscid limit in [\cite{goodair2026anisotropic}], where $\xi_i^h$ was also scaled to zero but at a different rate. Another directional decomposition of transport-stretching noise appeared in [\cite{flandoli2024boussinesq}], where the Taylor-Proudman model considered was itself 2D-3C. In this work the directions are not differently scaled, and the authors explore the Boussinesq Hypothesis through the limiting regime initiated in [\cite{galeati2020convergence}].

\subsection{Aspects of the Proof} \label{subs aspects}

To motivate a discussion on the main elements of the proof, let us first mention some of the most related results from the literature. Whilst there have been several works on stochastic Ekman layers from the numerical perspective, see for example [\cite{farrell1993stochastic}, \cite{kazemi2016dynamic}, \cite{kim2026statistical}, \cite{klein2022exploring}, \cite{klein2023capturing}, \cite{li2024stochastic}, \cite{li2025generalized}], from the analysis perspective the literature is far less developed. The work [\cite{hieber2013stochastic}] proves stability of the Ekman spiral, that is the explicit stationary solution of the deterministic Navier-Stokes-Coriolis equation, under stochastic perturbations. Closest to our result is [\cite{wang2024zero}], where the authors consider the same problem as us but for an additive noise acting only in the horizontal directions. Whilst we describe our proof in more detail below, the main behaviour that we must control owes to the vertical derivative from transport noise at the boundary; the balance between this singular behaviour and the regularisation of rotation is new to this work. An inviscid limit problem on the same domain and with transport noise was recently studied by the first author in [\cite{goodair2026anisotropic}], without rotation and where the horizontal viscosity is also sent to zero. In that work the limit was a strong solution of the deterministic Euler equation, and differences of the approach will be discussed below. For further reading on the inviscid limit of stochastic Navier-Stokes equations, we point to  [\cite{goodair2025zero}, \cite{luongo2024inviscid}, \cite{wang2024kato}] for conditional results, [\cite{cipriano2015inviscid}, \cite{goodair2025navier}] for the case of Navier boundary conditions and [\cite{bessaih2013inviscid}, \cite{brzezniak2026inviscid}, \cite{glatt2015inviscid}] for ergodic results in the absence of a boundary.\\

The first step in treating (\ref{Strat equation u}) is to convert the Stratonovich equation to It\^{o} form, where the analysis is much more favourable. For a more compact expression we will use the notation
$$\tilde{\mathcal{G}}_i \coloneqq \mathcal{G}_{\tilde{\xi}_i}$$
following (\ref{effective spatial correlation}) and (\ref{definition of G}), so that (\ref{Strat equation u}) can initially be rewritten as
\begin{equation}  \nonumber
    du_t = - B(u_t, u_t) dt - A_h u_t\, dt - \nu A_{z} u_t\, dt  - \mathcal{P} \frac{e_3 \wedge u_t}{\varepsilon}dt-\mathcal{P} \tilde{\mathcal{G}} u_t \circ d\mathcal{W}_t
\end{equation}
which corresponds, as indicated in [\cite{goodair2025stratonovich}], to the It\^{o} form
\begin{align}
    du_t = - B(u_t, u_t) dt - A_h u_t\, dt - \nu A_{z} u_t\, dt  - \mathcal{P} \frac{e_3 \wedge u_t}{\varepsilon}dt -  \mathcal{P} \tilde{\mathcal{G}} u_t \, d\mathcal{W}_t + \frac{1}{2}\sum_{i=1}^\infty \mathcal{P}\tilde{\mathcal{G}}_i\mathcal{P}\tilde{\mathcal{G}}_iu_t dt. \label{some main equation 2 Ito}
\end{align}
Whilst we will often refer to equation (\ref{some main equation 2 Ito}), it should be appreciated that the It\^{o}-Stratonovich corrector contains cross-derivatives and the equation has the full, explicit expression
\begin{align}
    \nonumber du_t = &- B(u_t, u_t) dt - A_h u_t\, dt - \nu A_{z} u_t\, dt  - \mathcal{P} \frac{e_3 \wedge u_t}{\varepsilon}dt -  \mathcal{P} \mathcal{G}^h u_t \, d\mathcal{W}_t -  \nu^{\alpha} \mathcal{P} \mathcal{G}^z u_t \, d\mathcal{W}_t\\
    &+ \frac{1}{2}\sum_{i=1}^\infty \mathcal{P}\mathcal{G}^h_i\mathcal{P}\mathcal{G}^h_iu_t dt + \frac{\nu^{\alpha}}{2}\sum_{i=1}^\infty \mathcal{P}\left( \mathcal{G}^h_i\mathcal{P}\mathcal{G}^z_iu_t + \mathcal{G}^z_i\mathcal{P}\mathcal{G}^h_iu_t \right) dt + \frac{\nu^{2\alpha}}{2}\sum_{i=1}^\infty \mathcal{P}\mathcal{G}^z_i\mathcal{P}\mathcal{G}^z_iu_t dt. \label{expanded ito form}
\end{align}
Where $\mathcal{P}$ would just be the usual Leray Projector, one typically simplifies the It\^{o}-Stratonovich corrector by
$$\mathcal{P}\tilde{\mathcal{G}}_i\mathcal{P}\tilde{\mathcal{G}}_iu_t = \mathcal{P}\tilde{\mathcal{G}}_i\tilde{\mathcal{G}}_iu_t,$$
see for example [\cite{goodair20223d}] Lemma 2.7. This property relies on the Helmholtz decomposition and the fact that the operator $\mathcal{G}_{\phi}$ preserves gradients, specifically $\mathcal{G}_{\phi}(\nabla f) = \nabla \left( (\phi \cdot \nabla) f \right)$. The same property is true when we refine the projector to also zero-mean vector fields, as $\mathcal{G}_{\phi}$ maps constant vector fields to gradients. In our case, $\mathcal{P}$ is a refinement of the Leray Projector in 3D which maps onto vector fields with null horizontal mean. One can no longer appeal to such a preservation of the orthogonal complement. This is not a trivial issue, which we highlight with the heuristic of an energy computation. To this end, let us isolate the transport and stretching parts of $\mathcal{G}$:
$$\mathcal{T}_{\phi}f \coloneqq \sum_{j=1}^3\phi^j\partial_jf, \qquad \mathcal{S}_{\phi}f \coloneqq \sum_{j=1}^3f^j\nabla \phi^j$$
with $\mathcal{T}_i \coloneqq \mathcal{T}_{\xi_i}$ and $\mathcal{S}_i \coloneqq \mathcal{S}_{\xi_i}$. In the simplest case of an $L^2$ based energy estimate, we meet the term
\begin{equation}\label{ismet}\inner{\mathcal{P}\tilde{\mathcal{G}}_i\mathcal{P}\tilde{\mathcal{G}}_iu}{u} + \norm{\mathcal{P}\tilde{\mathcal{G}}_iu}^2\end{equation}
owing to the It\^{o}-Stratonovich corrector and quadratic variation of the martingale. In the case of a pure transport noise, this is treated by
\begin{align*}
\inner{\mathcal{P}\mathcal{T}_i\mathcal{P}\mathcal{T}_iu}{u} + \norm{\mathcal{P}\mathcal{T}_iu}^2 = \inner{\mathcal{T}_i\mathcal{P}\mathcal{T}_iu}{u} + \inner{\mathcal{P}\mathcal{T}_iu}{\mathcal{T}_iu} = -\inner{\mathcal{P}\mathcal{T}_iu}{\mathcal{T}_iu} + \inner{\mathcal{P}\mathcal{T}_iu}{\mathcal{T}_iu} = 0
\end{align*}
whilst in the case of transport-stretching noise with typical Leray Projector $\mathcal{P}$, the treatment begins with
\begin{align*}
\inner{\mathcal{P}\mathcal{G}_i\mathcal{P}\mathcal{G}_iu}{u} + \norm{\mathcal{P}\mathcal{G}_iu}^2 = \inner{\mathcal{G}_i\mathcal{G}_iu}{u} + \norm{\mathcal{P}\mathcal{G}_iu}^2 \leq \inner{\mathcal{G}_i\mathcal{G}_iu}{u} + \norm{\mathcal{G}_iu}^2 = \inner{\mathcal{G}_iu}{(\mathcal{S}_i + \mathcal{S}_i^*)u}
\end{align*}
and then continues with a precise analysis, using that the remaining transport term only arises through a commutator $[\mathcal{T}_i, \mathcal{S}_i]$ which is bounded on $L^2$. Therefore, one obtains a desired bound by $\norm{u}^2$. Attempting the same argument in our case without the commutativity $\mathcal{P}\mathcal{G}_i\mathcal{P} = \mathcal{P}\mathcal{G}_i$, instead of $\mathcal{T}_i\mathcal{S}_i - \mathcal{S}_i\mathcal{T}_i$ we meet $\mathcal{T}_i\mathcal{P}\mathcal{S}_i - \mathcal{S}_i\mathcal{P}\mathcal{T}_i$ which is unbounded on $L^2$. Whilst we cannot enjoy this estimate, instead we prove that $$\inner{\mathcal{P}\tilde{\mathcal{G}}_i\mathcal{P}\tilde{\mathcal{G}}_iu}{u} +  \norm{\mathcal{P}\tilde{\mathcal{G}}_iu}^2  \leq 
\norm{\xi_i}_{W^{1,\infty}}^2\left(c_{\delta}\norm{u}^2 + \delta\norm{\nabla^hu}^2 +  \delta\nu^{2\alpha}\norm{\partial_3u}^2 \right)$$
where the derivative dependencies can be absorbed into the viscous terms of the energy estimate. We emphasise that $c_{\delta}$ is independent of $\nu$, and that $\nu$ only enters the right hand side through the scaling of $\xi_i^3$. The It\^{o} form for the 2D limit equation is \begin{equation} 
    dw_t = - B_h(w_t, w_t) dt - A_h w_t\, dt  - \sqrt{2\beta}w_t \, dt  - \mathcal{P}_h \mathcal{G}^h w_t \, d\mathcal{W}_t + \frac{1}{2}\sum_{i=1}^\infty \mathcal{P}_h\mathcal{G}^h_i\mathcal{G}^h_iw_tdt \label{Ito form for w}
\end{equation}
where we can enjoy the `weak commutativity' $\mathcal{P}_h\mathcal{G}^h_i\mathcal{P} = \mathcal{P}_h\mathcal{G}^h_i$ as discussed. A similar estimate was used in [\cite{goodair2026anisotropic}], but for a completely different reason. There, although one worked on the same domain, the projection did not include the null horizontal mean constraint; it is only required here to obtain a cancellation of the Coriolis term. Thus, the weak commutativity could be exploited in [\cite{goodair2026anisotropic}]. Rather, the difficulty in [\cite{goodair2026anisotropic}] was due to the fact that the effective spatial correlation functions $\tilde{\xi}_i$ were not divergence-free. Another issue that we addressed is that $\xi_i^3$ is constant in $z$ and non-trivial, so in particular $\xi_i \cdot \underline{n} = \pm\xi_i^3 \neq 0$ at the boundary. Typically, it is the fact that this quantity is null which facilitates the integration by parts in showing antisymmetry of $\mathcal{T}_i$, as required in the above estimates. We are careful to only need this antisymmetry property when tested against zero-trace vector fields, which carries the integration by parts in place of the impermeability of $\xi_i$.\\

Care must be taken in constructing the martingale weak solutions. An expected consequence of the lack of uniqueness for weak solutions of the 3D Navier-Stokes equations is that our solutions will be probabilistically weak. A priori, the constructed probability space and Cylindrical Brownian Motion supporting a solution will depend on $\nu$ and $\varepsilon$; such a concern can be alleviated due to an idea given in [\cite{breit-hofmanova2016}], where the application of Skorokhod's Theorem is done at the level of the family of Galerkin approximations indexed by $\nu$ and $\varepsilon$. We then show convergence to the unique strong solution of (\ref{Ito form for w}) taken with respect to this newly constructed probability space and Cylindrical Brownian Motion.\\

Following the aforementioned approach of Grenier and Masmoudi in [\cite{grenier1997ekman}], we look to estimate $u-w-\mathcal{B}$ although $u$ does not have the spatial regularity required to look at an energy identity directly. Therefore, we carry out the energy estimate at the level of the Galerkin approximation. At each order $n$, the approximate solution is driven by a different Cylindrical Brownian Motion $\mathcal{W}^n$. Labelling such a solution $u^n$, this fact renders a direct computation of $u^n - w - \mathcal{B}$ exceedingly difficult due to the non-trivial correlation between $\mathcal{W}^n$ and $\mathcal{W}$. We would prefer to estimate the difference $u^n - w^n - \mathcal{B}^n$, where $w^n$ is the strong solution driven by $\mathcal{W}^n$ and $\mathcal{B}^n$ is the boundary corrector associated to $w^n$. This is indeed what we do, by baking $w$ into the application of the Skorokhod Theorem. In fact we shall use a further approximation of $w$ by solutions with a smoother initial condition. Unlike in [\cite{wang2024zero}] where the initial condition $w_0 \in H^2$, we only assume $w_0 \in H^1$ and consider an approximate sequence of initial conditions $(w^m_0) \in H^2$. We rely on this smoothness only to handle residual terms arising from the Galerkin projections. A proper justification of the energy identity by Galerkin approximation was absent in [\cite{wang2024zero}] and indeed the original work [\cite{grenier1997ekman}], though we believe it introduces non-trivial complications worthy of detailing. The requirement that $w_0 \in H^2$ from [\cite{wang2024zero}] was instead used for a high probability control in the energy estimate, which after some computation on the nonlinear term reads as
$$\mathbbm{E}\left[\norm{u - w - \mathcal{B}}_{L^\infty([0,t];L^2)}^2 \right] \leq c\mathbbm{E}\left[\int_0^t\norm{w_s}_{H^2}^2\norm{u_s-w_s-\mathcal{B}_s}_{L^2}^2ds\right] + \dots.$$
Where $w_0 \in H^2$, then $w \in C_tH^2_x$ $\mathbbm{P}-a.s.$ and in particular for any $0 < \delta$, $\sup_{s\in[0,T]}\norm{w_s}_{H^2}^2 \leq C_{\delta}$ uniformly on a set of probability at least $1-\delta$. On this high probability set, the uniform bound can be applied and the energy estimate follows a standard Gr\"{o}nwall argument. The conclusion of [\cite{wang2024zero}] is then a convergence in probability result.\\

This is not our approach, and there are a few details to unpack. With deterministic $w_0 \in H^1$, then we do at least expect the estimate
$$\mathbbm{E}\left[\sup_{t\in[0,T]}\norm{w_t}_{H^1}^2 + \int_0^T\norm{w_s}_{H^2}^2ds  \right]\leq C\norm{w_0}_{H^1}^2$$
and one has a similar high probability control on $\int_0^T\norm{w_s}_{H^2}^2ds$. By using a Stochastic Gr\"{o}nwall Inequality from [\cite{glatt2009strong}], then this control is sufficient for the energy estimate and to conclude the convergence in probability argument. Whilst we do use the Stochastic Gr\"{o}nwall Inequality, it is only a consequence of the particular structure of our noise that we obtain not just convergence in probability but convergence in $L^2_{\Omega}$. The transport-stretching noise at velocity level in 2D reduces to a purely transport noise at vorticity level, which completely cancels in $L^2$ energy estimates thus enabling a deterministic bound. This lifts to a deterministic control
$$\sup_{t\in[0,T]}\norm{w_t}_{H^1}^2 + \int_0^T\norm{w_s}_{H^2}^2ds\leq C\norm{w_0}_{H^1}^2$$
so we can carry out the Stochastic Gr\"{o}nwall Inequality without prior restriction to a high probability set.\\

To conclude this section we briefly compare our method to that of [\cite{goodair2026anisotropic}], first noting that as the limit equation was deterministic in [\cite{goodair2026anisotropic}] then none of the above concerns applied. This leads to another significant difference, which is that our boundary corrector $\mathcal{B}$ is stochastic. We cannot apply a simple bound on $\partial_t\mathcal{B}$, but must rather identify the evolution equation satisfied by $\mathcal{B}$ and treat it term by term. We succeed by understanding the boundary corrector as a first order linear operator of $w$, applying this operator term by term in the evolution equation of $w$ which of course includes the transport-stretching noise and its It\^{o}-Stratonovich corrector. These terms require a precise control.

\section{Preliminaries}

This section is dedicated to establishing the framework of the main result, along with some required estimates. Subsection \ref{subs funct anal}
sets up the relevant function spaces, whilst Subsection \ref{boundary corrector subs} addresses the boundary corrector and its properties. Subsection \ref{subs stoch prelim} fixes notation and states results from stochastic analysis. The section concludes with Subsection \ref{subs transport stretch} providing estimates on the transport-stretching noise.

\subsection{Functional Analytic Preliminaries} \label{subs funct anal}

We recall that $\mathcal{O} = \T^2 \times (0,1)$, and denote the usual Sobolev Spaces $W^{s,p}(\mathcal{O};\R^3)$, $H^{\gamma}(\mathcal{O};\R^3)$ by simply $W^{s,p}$, $H^{\gamma}$. We shall have no quarrels in using this notation for the spaces $W^{s,p}(\T^2;\R^2)$ and $H^{\gamma}(\T^2;\R^2)$, as well as identifying vector fields $f: \T^2 \rightarrow \R^2$ with their trivial extension $\tilde{f}: \mathcal{O} \rightarrow \R^3$. We use $\inner{\cdot}{\cdot}$ to represent the $L^2$ inner product and similarly for the norm, whilst also employing subscripts $L^p_h$, $L^p_z$ as shorthand for $L^p\left(\T^2;\R^3\right)$ and $L^p\left((0,1);\R^3\right)$ respectively. This shorthand will also apply for general Euclidean target spaces, which shall be clear from the context. We proceed to define several divergence-free subspaces. For this, using the superscript $j$ to denote the $j^{\textnormal{th}}$ component mapping, we fix $\nabla \cdot f$ to be the divergence $\sum_{j=1}^3 \partial_jf^j$ and $\underline{n}$ to be the outward unit normal vector at $\partial \mathcal{O}$. Firstly, let us set
$$L^2_{\sigma} \coloneqq \left\{f \in L^2(\mathcal{O};\R^3): \nabla \cdot f = 0, \quad f \cdot \underline{n} = 0 \, \, \, \textnormal{on} \, \, \partial\mathcal{O}, \quad \int_{\T^2}f(x,y,z)dxdy = 0 \, \, \, \textnormal{for} \, \, a.e. \, z \right\}$$
where the divergence-free and boundary conditions are understood in a suitable weak sense, as the $L^2$ limit of smooth divergence-free and compactly supported functions, see for example [\cite{robinson2016three}] Section 2. Furthermore, let us define
\begin{align*}
    W^{1,2}_{\sigma} &\coloneqq \left\{f \in W^{1,2}(\mathcal{O};\R^3): \nabla \cdot f = 0, \quad f = 0 \, \, \, \textnormal{on} \, \, \partial\mathcal{O}, \quad \int_{\T^2}f(x,y,z)dxdy = 0 \, \, \, \textnormal{for} \, \, a.e. \, z \right\},\\
    \bar{W}^{1,2}_{\sigma} &\coloneqq \left\{f \in W^{1,2}(\mathcal{O};\R^3): \nabla \cdot f = 0, \quad f \cdot \underline{n} = 0 \, \, \, \textnormal{on} \, \, \partial\mathcal{O}, \quad \int_{\T^2}f(x,y,z)dxdy = 0 \, \, \, \textnormal{for} \, \, a.e. \, z \right\}
\end{align*}
as the spaces of divergence-free functions with zero horizontal mean, which are zero on the boundary or tangential to the boundary respectively. These are Hilbert Spaces under the usual $H^1$ inner product. $W^{1,2}_{\sigma}$ thus incorporates both the traditional no-slip boundary condition at the physical boundaries, and the zero-mean condition in the directions of $\T^2$. We note that the geometry of the domain means that $f \cdot \underline{n} = 0$ is equivalent to $f^3 = 0$ on $\partial \mathcal{O}$.\\

In addition, we introduce spaces specific to the two dimensional limit equation. We use $\nabla^h$ for the horizontal gradient $(\partial_1, \partial_2)$. Let us define
$$L^2_{\sigma,h} \coloneqq \left\{f \in L^2(\T^2;\R^2): \nabla^h \cdot f = 0, \quad \int_{\T^2}f(x,y)dxdy = 0 \right\}, \qquad W^{m,2}_{\sigma,h} \coloneqq W^{m,2}(\T^2;\R^2) \cap L^2_{\sigma,h}.$$
Note that if $f \in L^2_{\sigma,h}$ then its trivial extension belongs to $L^2_{\sigma}$, as we recall that the boundary condition $f \cdot \underline{n} = 0$ is equivalent to $f^3 = 0$ on $\partial \mathcal{O}$, and the extension $\tilde{f}$ is such that $\tilde{f}^3$ is zero. In continuing to identify $f$ with its extension, we shall write $f \in L^2_{\sigma}$. Similarly note that $W^{1,2}_{\sigma,h} \subset \bar{W}^{1,2}_{\sigma}$. The space of smooth, divergence-free and zero-mean vector fields is dense in $L^2_{\sigma,h}$ and every $W^{m,2}_{\sigma,h}$. We shall also employ the notation
\begin{equation} \nonumber \norm{\nabla^hf}^2 = \sum_{j=1}^2 \norm{\partial_jf}^2, \qquad \norm{\nabla f}^2 = \sum_{j=1}^3 \norm{\partial_jf}^2.\end{equation} The norms are equivalent to the usual $H^1$ norms on $W^{1,2}_{\sigma,h}$ and $W^{1,2}_{\sigma}$ respectively.\\

Furthermore, we use $\mathcal{P}$ to denote the orthogonal projection in $L^2(\mathcal{O};\R^3)$ onto $L^2_{\sigma}$ and $\mathcal{P}_h$ the orthogonal projection in $L^2(\T^2;\R^2)$ onto $L^2_{\sigma,h}$. We look to justify a consistency of these projections, namely, in continuing to use $\tilde{f}$ for the trivial extension of $f: \T^2 \rightarrow \R^2$, we claim that $\widetilde{\mathcal{P}_hf} = \mathcal{P}\tilde{f}$. Indeed, we have already noted that $\widetilde{\mathcal{P}_hf} \in L^2_{\sigma}$ so to verify that it is indeed $\mathcal{P}\tilde{f}$ we need only check that $\tilde{f} - \widetilde{\mathcal{P}_hf}$ is orthogonal to $L^2_{\sigma}$. This owes to the usual orthogonal decomposition of $f$,
$$f = \mathcal{P}_hf + \bar{f} + \nabla g$$
where $\bar{f}$ is its mean. Therefore
$$\tilde{f} - \widetilde{\mathcal{P}_hf} = \tilde{\bar{f}} + \widetilde{\nabla g}.$$
For orthogonality, we take an arbitrary $\phi \in L^2_{\sigma}$. Firstly, note that
$$\inner{\tilde{\bar{f}}}{\phi} = \int_0^1\left(\bar{f}^1\int_{\T^2}\phi^1(x,y,z)dxdy + \bar{f}^2\int_{\T^2}\phi^2(x,y,z)dxdy\right)dz = 0$$
due to the null horizontal mean condition on $\phi$. For the gradient, observe that
$$\widetilde{\nabla g} = \nabla \tilde{g}$$
where $\tilde{g}$ is simply the scalar $\tilde{g}(x,y,z) = g(x,y)$. Orthogonality with the divergence-free $\phi$, also satisfying $\phi \cdot \underline{n} = 0$ at the physical boundary, is now classical. Owing to this consistency, we will identify $\mathcal{P}_h$ with $\mathcal{P}$ and denote it simply by the latter. As a result, we identify the 2D nonlinear term $B_h$ with $B$.\\

To facilitate a Galerkin approximation, we shall also construct a specific orthonormal basis of $L^2_{\sigma}$. For this, we appeal to a Fourier decomposition in horizontal directions of $L^2_{\sigma}$. Namely, by defining $\Pi_k$ for $k \in \Z^2$ on $f: \mathcal{O} \rightarrow \R^3$ by
$$(\Pi_kf)(x,y,z) = \hat{f}(k,z)e^{ik\cdot (x,y)}$$
where $\hat{f}(k,z)$ is the $k^{\textnormal{th}}-$Fourier mode of the function $f(\cdot, z)$, then we have the decomposition
$$L^2_{\sigma} = \bigoplus_{k \in \Z^2}\Pi_kL^2_{\sigma}.$$
Note that $A_h$ is just a constant multiplier on $\Pi_kL^2_{\sigma}$, and defining $A \coloneqq A_h + A_z$ with domain $H^2 \cap W^{1,2}_{\sigma}$, then classical spectral theory allows us to construct an orthonormal basis for each $\Pi_kL^2_{\sigma}$ consisting of eigenfunctions of $A$. By taking the union over all $k$ of these bases, we have an orthonormal basis of $L^2_{\sigma}$ consisting of eigenfunctions of $A$. Moreover, as $A_h$ is just a constant multiplier on $\Pi_kL^2_{\sigma}$, then this basis also consists of eigenfunctions for $A_h$. Indexing this basis over $n \in \N$, we use $\mathcal{P}_n$ to denote the orthogonal projection in $L^2(\mathcal{O};\R^3)$ onto its first $n$ elements.\\

We end this section with a simple technical lemma, that will be useful in later estimates.
\begin{lemma}\label{lem:auxiliary-est}
    Let $f\in W^{1,2}_{\sigma}$, $g\in L^2_{\sigma,h}$, $a\in L^\infty_z$. Then the following hold:
    \begin{align}
        \| f \|_{L^2_z L^4_h}^2 &\le c \|f\| \|\nabla^hf\| \label{eq:ladyzh}\\
        \left| \int_\mathcal{O} a(z)g^l(x,y)f^j f^k\right| &\le c \|a\|_{L^\infty_z}\|g\|_{L^2_h}\|f\|\|\nabla^h f\|. \label{aux-est2}
    \end{align}
\end{lemma}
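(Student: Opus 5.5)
For \eqref{eq:ladyzh} the plan is to apply the two-dimensional Ladyzhenskaya inequality slice by slice in $z$ and then integrate in $z$. For a.e.\ $z\in(0,1)$ the slice $f(\cdot,\cdot,z)$ lies in $H^1(\T^2;\R^3)$ and has zero horizontal mean, since $f \in W^{1,2}_{\sigma}$. The mean-zero form of Ladyzhenskaya's inequality on $\T^2$ therefore gives
$$\norm{f(\cdot,z)}_{L^4_h}^2 \le c\norm{f(\cdot,z)}_{L^2_h}\norm{\nabla^h f(\cdot,z)}_{L^2_h}.$$
The mean-zero property is exactly what removes the lower-order term $\norm{f(\cdot,z)}_{L^2_h}^2$ that would otherwise appear, and it comes for free from the definition of $W^{1,2}_{\sigma}$. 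Integrating over $z\in(0,1)$ and applying Cauchy--Schwarz in $z$ yields
$$\norm{f}_{L^2_zL^4_h}^2=\int_0^1\norm{f(\cdot,z)}_{L^4_h}^2dz \le c\Big(\int_0^1\norm{f(\cdot,z)}_{L^2_h}^2dz\Big)^{1/2}\Big(\int_0^1\norm{\nabla^hf(\cdot,z)}_{L^2_h}^2dz\Big)^{1/2},$$
which is $c\norm{f}\norm{\nabla^hf}$. The zero-trace condition on $f$ is not needed here; only the slice-wise regularity (Fubini on $W^{1,2}$) and the zero horizontal mean are used.

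For \eqref{aux-est2} the plan is to pull out $a$ in $L^\infty_z$ and then use Hölder in the horizontal variables for each fixed $z$. Since $g$ is independent of $z$,
$$\Big|\int_{\mathcal{O}}a g^l f^jf^k\Big|\le \norm{a}_{L^\infty_z}\int_0^1\int_{\T^2}|g||f|^2\,dx\,dy\,dz .$$
For each fixed $z$, Hölder with exponents $(2,4,4)$ bounds the inner integral by $\norm{g}_{L^2_h}\norm{f(\cdot,z)}_{L^4_h}^2$. Integrating in $z$ gives $\norm{g}_{L^2_h}\norm{f}_{L^2_zL^4_h}^2$, and \eqref{eq:ladyzh} then finishes the argument.

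There is no substantial obstacle. The only point that needs care is justifying the slice-wise statements: for a.e.\ $z$, $f(\cdot,z)\in H^1(\T^2)$ with $\nabla^h$ commuting with restriction, and the mean-zero property holds for a.e.\ $z$. Both follow from Fubini applied to $f$ and $\nabla f$ in $L^2(\mathcal{O})$. An alternative route is to prove the inequalities for smooth functions and conclude by density.
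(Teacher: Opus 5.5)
Your proposal is correct and follows the paper's proof essentially step for step. For the first inequality you apply the two-dimensional Ladyzhenskaya inequality on each $z$-slice and then Cauchy--Schwarz in $z$. For the second you pull out $\|a\|_{L^\infty_z}$, apply horizontal H\"older with exponents $(2,4,4)$, and then use the first inequality. You also note that the zero horizontal mean of $f$ is what permits the homogeneous form of Ladyzhenskaya's inequality, without the lower-order $\|f(\cdot,z)\|_{L^2_h}^2$ term; the paper uses this without saying so.
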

\begin{proof}
Let us start with the first inequality. By $2$-d Ladyzhenskaya's inequality, for each $z$ we have
\begin{align*}
     \| f(\cdot,z) \|_{L^4_h}^2 &\le c \|f(\cdot,z)\|_{L^2_h}\|\nabla^h f(\cdot,z)\|_{L^2_h}.
\end{align*}
Now applying the Cauchy-Schwarz inequality gives the first statement \eqref{eq:ladyzh}.

For the second statement, we start by bounding $a$ uniformly and applying a horizontal H\"older with exponents $2,4,4$, followed 
by an application of \eqref{eq:ladyzh}:
\begin{align*}
    \left| \int_\mathcal{O} a(z)g(x,y)f^j f^k\right| &\le c\|a\|_{L^\infty_z}\|g\|_{L^2_h}  \int_{[0,1]} \|f(\cdot, z)\|_{L^4_h}^2 dz
    \le c \|a\|_{L^\infty_z}\|g\|_{L^2_h}\|f\|\|\nabla^h f\|.
\end{align*}
\end{proof}





\subsection{Boundary Layer Corrector} \label{boundary corrector subs}
In this section we recall the construction of the Boundary layer corrector as carried out in [\cite{masmoudi1998euler}]. The reason why the same expansion holds, is that the vertical noise is scaled by $\nu^\alpha$ with $\alpha>\frac{1}{2}$, which makes the vertical part of the martingale terms ``small" compared to the leading order deterministic boundary layer behaviour. This intuition breaks down if one were to consider the case $\alpha=\frac{1}{2}$, which requires treating this non-trivial martingale term while dealing with the boundary layer. 

Let us recall, that the construction of the boundary layer $\mathcal{B}$ proceeds by obtaining four intermediate correctors, so that
\begin{equation}
    \mathcal{B} = \mathcal{B}_1 + \mathcal{B}_2 + \mathcal{B}_3 + \mathcal{B}_4,
\end{equation}
with $\nabla \cdot \mathcal{B} =0 $ and $\mathcal{B} + w =0$ on $\partial \mathcal{O}$. We further note, that the boundary corrector above can be seen as a linear operator acting on the effective interior solution $w$, hence warranting the notation $\mathcal{B}[w]$. Thus, retaining the same $\mathcal{M}(z)$ as in [\cite{masmoudi1998euler}] we write
\begin{equation}\label{eq:corrector-def}
\BB[f](x,y,z)=\mathcal M(z)
\begin{pmatrix}f^1(x,y)\\ f^2(x,y)\\ \curl f(x,y)\end{pmatrix}
= \sum_{i=1}^4 \BB_i[f],
\end{equation}
for any horizontal, mean zero and divergence free field $f$.The matrix \(\mathcal M\) depends on
\(\varepsilon,\nu\) and consists of a
horizontal block \(\mathcal M^h\) and a scalar block \(\mathcal M^z\). As in [Section 3.1, \cite{masmoudi1998euler}] the matrix $\mathcal{M}$ enjoys the following bounds:
\begin{equation}\label{eq:matrix}
\begin{aligned}
\|\mathcal M\|_{L_z^2}
+\|d\,\partial_3\mathcal M\|_{L_z^2}
&\le c(\varepsilon\nu)^{1/4},\\
\|\mathcal M^z\|_{L_z^\infty}
+\|d^2\partial_3\mathcal M\|_{L_z^\infty}
&\le c(\varepsilon\nu)^{1/2},\\
\|\partial_3\mathcal M\|_{L_z^2}
&\le c(\varepsilon\nu)^{-1/4},\\
\|\mathcal M\|_{L_z^\infty}
+\|\partial_3\mathcal M^z\|_{L_z^\infty}
&\le c,
\end{aligned}
\end{equation}
where $d(z)=\min\{z,1-z\}$.
Let us also collect how the bounds \eqref{eq:matrix} translate to operator bounds on $\BB$:
\begin{equation}\label{eq:Bnorm}
\begin{aligned}
\|\BB[f]\|&\le c(\varepsilon\nu)^{1/4}\|f\|_{H^1},\\
\|\nabla^h\BB[f]\|&\le c(\varepsilon\nu)^{1/4}\|f\|_{H^2},\\
\|\partial_3\BB[f]\|&\le c(\varepsilon\nu)^{-1/4}\|f\|_{H^1}.
\end{aligned}
\end{equation}

Apart from these bounds, the boundary layer correctors are constructed to display cancellations corresponding to the interaction between the turbulent vertical viscosity and fast Coriolis rotation term. The specific relations can be captured in the following:
\begin{equation}\label{eq:profiles}
\begin{aligned}
\nu\partial_3^2\BB_1[f]^h
-\varepsilon^{-1}(e_3\wedge\BB_1[f])^h&=0,\\
\nu\partial_3^2\BB_2[f]^h
-\varepsilon^{-1}(e_3\wedge\BB_2[f])^h
+\sqrt{2\nu/\varepsilon}\,f^h&=0,\\
\partial_3^2\BB_3[f]^h&=0,\\
\|\BB_3[f]^h\|&\le c e^{-1/\sqrt{2\varepsilon\nu}}\|f\|,
\end{aligned}
\end{equation}
where the superscript $h$ refers to the horizontal component of the fields.
The fourth horizontal contribution is
\begin{equation}\label{eq:profile4}
\BB_4[f]^h
=\left\{a\left(e^{-z/\sqrt{2\varepsilon\nu}}
+e^{-(1-z)/\sqrt{2\varepsilon\nu}}\right)+b\right\}(f^2,-f^1),
\quad
|a|\le c\sqrt{\varepsilon\nu},\quad |b|\le c\varepsilon\nu.
\end{equation}
The scalars \(a,b\) are fixed by the corrector construction, and only their displayed bounds are used below. For the corrector in \eqref{eq:profile4},
\[
\int_0^1e^{-2z/\sqrt{2\varepsilon\nu}}\,dz
\le\frac{\sqrt{2\varepsilon\nu}}2.
\]
The same estimate holds at the upper plate. Using the bounds for \(a,b\),
and differentiating twice, gives
\begin{equation}\label{eq:B4bounds}
\|\BB_4[w]^h\|
\le c\bigl((\varepsilon\nu)^{3/4}+\varepsilon\nu\bigr)\|w\|,\quad
\|\partial_3^2\BB_4[w]^h\|
\le c(\varepsilon\nu)^{-1/4}\|w\|.
\end{equation}

We further note that divergence freeness gives an improved vertical-component bound
\begin{equation}\label{eq:B3derivative}
\|\partial_3\BB[f]^3\|=\|\divh\BB[f]^h\|
\le c(\varepsilon\nu)^{1/4}\|f\|_{H^1}.
\end{equation}
Finally, we note that the bound on $\BB[f]$ can be improved when tested against a $ W_\sigma^{1,2}$ function. Due to the specific structure, we get 
$$\langle\BB[f],g\rangle
=\int_\OO \mathcal M^h f^h\cdot g^h
 +\int_\OO\mathcal M^z(f^1\partial_2g^3-f^2\partial_1g^3),$$
which implies
\begin{equation}\label{eq:Bdual}
\begin{aligned}
|\langle\BB[f],g\rangle|
&\le c(\varepsilon\nu)^{1/4}\|f\|_{L_h^2}
\bigl(\|g\|+\|\nabla^hg\|\bigr),
\end{aligned}
\end{equation}
applying Cauchy-Schwarz first in the horizontal variables, then in the vertical, and using the $L^2_z$ bound on $\mathcal{M}$.

\subsection{Stochastic Preliminaries} \label{subs stoch prelim}

By a filtered probability space $\mathcal{S}$, we mean a quartet $(\Omega,\mathcal{F},(\mathcal{F}_t), \mathbbm{P})$ satisfying the usual conditions of completeness and right continuity. Let us fix an auxiliary Hilbert Space $\mathfrak{U}$ with orthonormal basis $(e_i)$. We say that $\mathcal{W}$ is a Cylindrical Brownian Motion over $\mathfrak{U}$ with respect to $\mathcal{S}$ if $\mathcal{W}_t = \sum_{i=1}^\infty e_iW^i_t$ as a limit in $L^2(\Omega;\mathfrak{U}')$ for some collection $(W^i)$ of i.i.d. standard real valued Brownian Motions with respect to $\mathcal{S}$, and $\mathfrak{U}'$ an enlargement of the Hilbert Space $\mathfrak{U}$ such that the embedding $J: \mathfrak{U} \rightarrow \mathfrak{U}'$ is Hilbert-Schmidt. In this case $\mathcal{W}$ is a $JJ^*-$Cylindrical Brownian Motion over $\mathfrak{U}'$. Given a process $F:[0,T] \times \Omega \rightarrow \mathscr{L}^2(\mathfrak{U};\mathscr{H})$ progressively measurable and such that $F \in L^2\left(\Omega \times [0,T];\mathscr{L}^2(\mathfrak{U};\mathscr{H})\right)$, for any $0 \leq t \leq T$ we define the stochastic integral $$\int_0^tF_sd\mathcal{W}_s \coloneqq \sum_{i=1}^\infty \int_0^tF_s(e_i)dW^i_s,$$ where the infinite sum is taken in $L^2(\Omega;\mathscr{H})$. We can extend this notion to processes $F$ which are such that $F(\omega) \in L^2\left( [0,T];\mathscr{L}^2(\mathfrak{U};\mathscr{H})\right)$ for $\mathbbm{P}-a.e.$ $\omega$ via the traditional localisation procedure. In this case the stochastic integral is a local martingale in $\mathscr{H}$. We thus consider $\mathcal{G}$ and its relatives as an operator on $\mathfrak{U}$ by $\mathcal{G}(e_i) = \mathcal{G}_i$, see [\cite{goodair2024stochastic}] Subchapter 3.2. We defer to [\cite{goodair2024stochastic}] Chapter 2 for further details on this construction and properties of the stochastic integral. We shall make use of the Burkholder-Davis-Gundy Inequality ([\cite{da2014stochastic}] Theorem 4.36) and the energy identity ([\cite{goodair2024stochastic}] Proposition 4.3, [\cite{liu2015stochastic}] Theorem 4.2.5), as well as the following Stochastic Gr\"{o}nwall Lemma which is only slightly and straightforwardly modified from [\cite{glatt2009strong}] Lemma 5.3.

\begin{lemma} \label{gronny}
    Let $0 < T$ be a fixed time horizon, and let $\boldsymbol{\phi},\boldsymbol{\psi}, \boldsymbol{\eta}$ be real-valued, non-negative stochastic processes. Assume that there exist constants $0 \leq c',\hat{c}$, $\kappa$ such that for $\mathbbm{P}-a.e.$ $\omega$, \begin{equation} \label{boundingronny} \int_0^{T}\boldsymbol{\eta}_s(\omega) ds \leq c'\end{equation} 
     and for all stopping times $0 \leq \theta \leq \theta' \leq T$,
    $$\mathbbm{E}\left(\sup_{r \in [\theta,\theta']}\boldsymbol{\phi}_r\right) + \mathbbm{E}\left(\int_{\theta}^{\theta'}\boldsymbol{\psi}_sds \right)\leq \hat{c}\mathbbm{E}\left(\left(\boldsymbol{\phi}_{\theta} + \kappa \right) + \int_{\theta}^{\theta'} \boldsymbol{\eta}_s\boldsymbol{\phi}_sds\right) < \infty. $$
    Then there exists a constant $C$ dependent only on $c',\hat{c},T$ such that $$\mathbbm{E}\left(\sup_{r \in [0,T]}\boldsymbol{\phi}_r\right) + \mathbbm{E}\left(\int_{0}^{T}\boldsymbol{\psi}_sds\right) \leq C\left[\mathbbm{E}(\boldsymbol{\phi}_{0}) + \kappa\right].$$
    \end{lemma}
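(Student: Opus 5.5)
The proof follows the stopping-time argument of [\cite{glatt2009strong}] Lemma 5.3. The almost sure bound (\ref{boundingronny}) on $\int_0^T \boldsymbol{\eta}_s ds$ replaces the localisation to a high-probability set used there, so no restriction is needed. Since $c'$ is deterministic, we chop $[0,T]$ into finitely many random subintervals on which the $\boldsymbol{\eta}$-integral is small. Define stopping times $\tau_0 = 0$ and
$$\tau_{k+1} \coloneqq \inf\left\{ t \geq \tau_k : \int_{\tau_k}^{t}\boldsymbol{\eta}_s ds \geq \frac{1}{2\hat{c}}\right\} \wedge T.$$
These are stopping times provided $\boldsymbol{\eta}$ is adapted, which we take as implicit in the hypotheses. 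By (\ref{boundingronny}), $\tau_N = T$ almost surely for the deterministic integer $N = \lceil 2\hat{c}c' \rceil + 1$, since each completed step consumes at least $\frac{1}{2\hat{c}}$ of the total budget $c'$.

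On each interval we apply the hypothesis with $\theta = \tau_k$ and $\theta' = \tau_{k+1}$. We bound
$$\mathbbm{E}\int_{\tau_k}^{\tau_{k+1}}\boldsymbol{\eta}_s\boldsymbol{\phi}_s ds \leq \mathbbm{E}\left(\sup_{r\in[\tau_k,\tau_{k+1}]}\boldsymbol{\phi}_r \int_{\tau_k}^{\tau_{k+1}}\boldsymbol{\eta}_sds\right) \leq \frac{1}{2\hat{c}}\,\mathbbm{E}\left(\sup_{r\in[\tau_k,\tau_{k+1}]}\boldsymbol{\phi}_r\right).$$
The hypothesis states that the right-hand side is finite, so this term can be absorbed into the left-hand side. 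This yields
$$\mathbbm{E}\left(\sup_{r\in[\tau_k,\tau_{k+1}]}\boldsymbol{\phi}_r\right) + \mathbbm{E}\int_{\tau_k}^{\tau_{k+1}}\boldsymbol{\psi}_sds \leq 2\hat{c}\left(\mathbbm{E}\boldsymbol{\phi}_{\tau_k} + \kappa\right).$$
Since $\mathbbm{E}\boldsymbol{\phi}_{\tau_k} \leq \mathbbm{E}\sup_{r\in[\tau_{k-1},\tau_k]}\boldsymbol{\phi}_r$, induction gives $\mathbbm{E}\sup_{[\tau_k,\tau_{k+1}]}\boldsymbol{\phi} \leq C_k(\mathbbm{E}\boldsymbol{\phi}_0 + \kappa)$ with $C_k$ depending only on $\hat{c}$ and $k$.

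Finally, we sum over $k = 0,\dots,N-1$. The supremum over $[0,T]$ is bounded by the sum of the suprema over the pieces, and the $\boldsymbol{\psi}$-integrals add exactly. This gives the claim with $C$ depending only on $\hat{c}$ and $N$, hence on $c'$ and $\hat{c}$; the $T$-dependence is allowed in the statement in any case.

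The only delicate points are the following. First, the absorption requires $\mathbbm{E}\sup\boldsymbol{\phi}$ to be finite on each interval, and this is supplied by the assumed finiteness. Second, the number of intervals must be deterministic. I would check carefully that the infimum defining $\tau_{k+1}$ gives $\int_{\tau_k}^{\tau_{k+1}}\boldsymbol{\eta}_s ds \leq \frac{1}{2\hat{c}}$, which holds by continuity of the integral in $t$. Should $\hat{c}<1$ cause notational issues, we replace $\hat{c}$ by $\max(\hat{c},1)$.
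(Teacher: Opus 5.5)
Your proposal is correct and is essentially the argument the paper relies on. The paper gives no separate proof but defers to [\cite{glatt2009strong}] Lemma 5.3, whose proof is exactly this: partition $[0,T]$ by stopping times on which $\int\boldsymbol{\eta}_s\,ds \le 1/(2\hat{c})$, absorb the $\boldsymbol{\eta}\boldsymbol{\phi}$ term using the assumed finiteness, iterate, and sum over the pieces; your use of the almost sure deterministic bound $c'$ to make the number of pieces $N$ deterministic, with $\kappa$ collected once per piece, is precisely the ``slight modification'' the paper has in mind.
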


\subsection{Transport-Stretching Noise} \label{subs transport stretch}

We now address key properties of the anisotropically scaled transport-stretching noise operator, starting by recalling the definition of $\mathcal{G}$ from (\ref{definition of G}) for vector fields $\phi$, $f$ as
$$\mathcal{G}_{\phi}f = \sum_{j=1}^3\left(\phi^j\partial_jf + f^j\nabla \phi^j\right).$$

\begin{lemma} \label{unnamed}
    Let $\phi \in W^{1,\infty}$ satisfy that $\nabla \cdot \phi = 0$. Then for all $f\in H^1$ and $g \in H^1$ such that either $f = 0$ on $\partial \mathcal{O}$ or $g = 0$ on $\partial \mathcal{O}$, we have that
    \begin{align*}
        \inner{\mathcal{T}_{\phi}f}{g} &= -\inner{f}{\mathcal{T}_{\phi}g},\\
        \inner{\mathcal{S}_{\phi}f}{g} &= \inner{f}{\mathcal{T}_g\phi}.
    \end{align*}
Therefore, we define the operators $\mathcal{T}_{\phi}^*$, $\mathcal{S}_{\phi}^*$ and $\mathcal{G}_{\phi}^*$ on $H^1$ by
\begin{align*}
    \mathcal{T}_{\phi}^*f &= -\mathcal{T}_{\phi}f,\\
    \mathcal{S}_{\phi}^*f &= \mathcal{T}_{f}\phi,\\
    \mathcal{G}_{\phi}^*f &= \mathcal{T}_{\phi}^*f + \mathcal{S}_{\phi}^*f.
\end{align*}
    
\end{lemma}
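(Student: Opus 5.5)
The plan is to prove both identities first for smooth $f,g$ (with the relevant one vanishing on $\partial\mathcal{O}$) and then pass to $H^1$ by density. Both sides of each identity are continuous in the $H^1\times H^1$ topology when $\phi\in W^{1,\infty}$, since $\mathcal{T}_\phi f$, $\mathcal{S}_\phi f$ and $\mathcal{T}_g\phi$ are bounded in $L^2$ by $\norm{\phi}_{W^{1,\infty}}\norm{f}_{H^1}$ or $\norm{\phi}_{W^{1,\infty}}\norm{g}$. The subspace of $H^1$ functions with zero trace is the closure of $C_c^\infty(\mathcal{O})$, so the function with the boundary condition can be approximated by compactly supported smooth functions. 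The other function can be approximated by smooth functions up to the boundary, which are dense in $H^1(\mathcal{O})$ because $\mathcal{O}$ is Lipschitz. It therefore suffices to treat smooth functions, one of which is compactly supported in the vertical direction. The horizontal directions are periodic and produce no boundary terms.

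For the transport identity, write componentwise
$$\inner{\mathcal{T}_\phi f}{g}=\sum_{k}\sum_{j=1}^3\int_{\mathcal{O}}\phi^j\partial_jf^k\,g^k.$$
Integrate by parts in $x_j$. The boundary contribution is $\int_{\partial\mathcal{O}}(\phi\cdot\underline{n})f^kg^k$, which vanishes because $f^kg^k=0$ on $\partial\mathcal{O}$. This is the point emphasised in the introduction: we do not need $\phi\cdot\underline{n}=0$, since the zero trace of $f$ or $g$ carries the integration by parts. The interior terms give $-\int f^k\partial_j(\phi^jg^k)=-\int(\nabla\cdot\phi)f^kg^k-\int f^k\phi^j\partial_jg^k$. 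The divergence term vanishes, which yields $-\inner{f}{\mathcal{T}_\phi g}$.

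The stretching identity is pure algebra and needs no integration by parts or boundary condition:
$$\inner{\mathcal{S}_\phi f}{g}=\sum_{j,k}\int f^j\,\partial_k\phi^j\,g^k=\sum_j\int f^j\Big(\sum_k g^k\partial_k\phi^j\Big)=\inner{f}{\mathcal{T}_g\phi}.$$
Here $\mathcal{T}_g\phi=\sum_k g^k\partial_k\phi$ is well defined in $L^2$ for $g\in L^2$ because $\phi\in W^{1,\infty}$.

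The definitions of $\mathcal{T}_\phi^*$, $\mathcal{S}_\phi^*$ and $\mathcal{G}_\phi^*$ then simply record these identities, with $\mathcal{G}_\phi^*=\mathcal{T}_\phi^*+\mathcal{S}_\phi^*$ following by linearity from $\mathcal{G}_\phi=\mathcal{T}_\phi+\mathcal{S}_\phi$. The only step needing some care is the density and trace argument for the boundary term, together with the $H^1$ continuity of each side; everything else is routine.
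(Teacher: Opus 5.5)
Your proposal is correct and matches the paper's proof: you approximate the zero-trace factor by compactly supported functions to justify the integration by parts for the transport identity, so the divergence-free condition removes the extra term and $\phi\cdot\underline{n}=0$ is never needed. The stretching identity is then the same pointwise rearrangement of indices that the paper uses.
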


\begin{proof}
The first result is classical, using a compactly supported approximation of either $f$ or $g$ in $H^1$ to conduct the integration by parts. For the second result, we simply observe that
    \begin{align*}
        \inner{\mathcal{S}_{\phi}f}{g} = \sum_{j=1}^3\sum_{l=1}^3\inner{f^j\partial_l\phi^j}{g^l} = \sum_{j=1}^3\sum_{l=1}^3\inner{f^j}{g^l\partial_l\phi^j} = \sum_{j=1}^3\inner{f^j}{\mathcal{T}_g\phi^j} = \inner{f}{\mathcal{T}_g\phi}.
    \end{align*}
\end{proof}

Let us now fix the spatial correlation functions $(\xi_i)$, $\xi_i: \T^2 \rightarrow \R^3$, satisfying $\nabla \cdot \xi_i = 0$ and $\sum_{i=1}^\infty \norm{\xi_i}_{W^{4,\infty}}^2 < \infty$. This level of smoothness is required for the smoother solutions of the limit equation later obtained in Lemma \ref{strong existence for 2D NS}. We further recall and extend several notations from Subsection \ref{subs structure}, firstly the splitting and scaling 
\begin{align*}
    \xi_i^h = \left(\xi_i^1, \xi_i^2, 0\right), \qquad \xi_i^z = \left(0, 0, \xi_i^3 \right), \qquad \tilde{\xi}_i^z = \nu^{\alpha}\xi_i^z
\end{align*}
with
$$ \tilde{\xi}_i = \xi_i^h + \tilde{\xi}_i^z = \left(\xi_i^1, \xi_i^2, \nu^{\alpha}\xi_i^3\right)$$
and then the corresponding operators
$$\tilde{\mathcal{G}}_i = \mathcal{G}_{\tilde{\xi}_i}, \qquad \tilde{\mathcal{G}}_i^z = \mathcal{G}_{\tilde{\xi}_i^z}. $$
The divergence-free property is preserved for all variants of $\xi_i$ considered. We thus obtain expressions for $\tilde{\mathcal{G}}_i^*$, $\mathcal{G}_i^{h,*}$ and $\tilde{\mathcal{G}}_i^{z,*}$ due to Lemma \ref{unnamed}. Noting that $\mathcal{T}_{\xi_i^h}f = \sum_{j=1}^2\xi_i^j\partial_jf$ and $\mathcal{T}_{\tilde{\xi}_i^z}f = \tilde{\xi}_i^3\partial_3f$, one deduces the following bounds. As we are considering the limit $\nu \rightarrow 0$, we assume $\nu \leq 1$ for simplicity.

\begin{lemma} \label{bounds on split noise}
There exists a constant $c$ such that, for all $f \in H^1$,
\begin{align*}
    \norm{\mathcal{G}_i^hf} + \norm{\mathcal{G}_i^{h,*}f} &\leq c\norm{\xi_i^h}_{W^{1,\infty}}\left(\norm{\nabla^hf} + \norm{f}\right),\\
     \norm{\tilde{\mathcal{G}}_i^zf} + \norm{\tilde{\mathcal{G}}_i^{z,*}f} &\leq c\nu^{\alpha}\norm{\xi_i^z}_{W^{1,\infty}}\left(\norm{\partial_3f} + \norm{f}\right),\\
     \norm{\tilde{\mathcal{G}}_if}+\norm{\tilde{\mathcal{G}}_i^*f}
&\le c\norm{\xi_i}_{W^{1,\infty}}
\left(\norm{\nabla^hf}+\nu^\alpha\|\partial_3f\|+\|f\|\right).
\end{align*}
    
\end{lemma}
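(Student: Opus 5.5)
The three bounds follow by writing each operator out explicitly, using the adjoint formulas of Lemma \ref{unnamed}, and applying H\"older's inequality with the $L^\infty$ norms of $\xi_i$ and its first derivatives. Throughout, $\xi_i$ depends only on $(x,y)$, so $\partial_3\xi_i = 0$.

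For the horizontal operator we have
\[
\mathcal{G}_i^hf = \sum_{j=1}^2 \xi_i^j\partial_jf + \sum_{j=1}^2 f^j\nabla\xi_i^j .
\]
Here $\nabla\xi_i^j$ involves only first derivatives of $\xi_i^h$. This gives
\[
\norm{\mathcal{G}_i^hf} \le \norm{\xi_i^h}_{L^\infty}\norm{\nabla^hf} + c\norm{\xi_i^h}_{W^{1,\infty}}\norm{f}.
\]
For the adjoint, Lemma \ref{unnamed} gives
\[
\mathcal{G}_i^{h,*}f = -\mathcal{T}_{\xi_i^h}f + \mathcal{T}_f\xi_i^h = -\sum_{j=1}^2\xi_i^j\partial_jf + \sum_{l=1}^3 f^l\partial_l\xi_i^h .
\]
The only derivatives of $f$ here are horizontal, and the second sum involves only $\partial_1\xi_i^h$ and $\partial_2\xi_i^h$. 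The same H\"older bound therefore applies.

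For the vertical operator,
\[
\tilde{\mathcal{G}}_i^zf = \nu^\alpha\xi_i^3\partial_3f + \nu^\alpha f^3\nabla\xi_i^3, \qquad \tilde{\mathcal{G}}_i^{z,*}f = -\nu^\alpha\xi_i^3\partial_3f + \nu^\alpha\sum_{l=1}^2 f^l\partial_l\xi_i^z .
\]
Each term carries the factor $\nu^\alpha$ and contains either $\partial_3f$ or an undifferentiated $f$. This gives the second bound with the constant $\norm{\xi_i^z}_{W^{1,\infty}}$.

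The third bound follows from the triangle inequality, since $\tilde{\mathcal{G}}_i = \mathcal{G}_i^h + \tilde{\mathcal{G}}_i^z$ and adjoints are additive. The adjoints are defined via $\mathcal{T}$, which is linear in both arguments. We then use $\norm{\xi_i^h}_{W^{1,\infty}}, \norm{\xi_i^z}_{W^{1,\infty}} \le \norm{\xi_i}_{W^{1,\infty}}$.

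The only point needing care is that the adjoint formulas are taken as the definitions on $H^1$ from Lemma \ref{unnamed}. No boundary condition on $f$ is therefore required, and there is no genuine obstacle. The main thing to check is that $\mathcal{T}_f\xi_i^h$ contains no vertical derivative of $\xi_i$: it vanishes because $\xi_i$ is independent of $z$. If it did not vanish, a $\nu$-independent term with $f^3$ would appear, but it would still be bounded by $\norm{f}$.
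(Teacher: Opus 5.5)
Your proposal is correct and follows the paper's own (one-line) justification: expand $\mathcal{G}_i^h$, $\tilde{\mathcal{G}}_i^z$ and their adjoints from Lemma \ref{unnamed}, observe that transport along $\xi_i^h$ involves only horizontal derivatives while transport along $\tilde{\xi}_i^z$ involves only $\partial_3$ with a factor $\nu^{\alpha}$, and apply H\"older. The one detail you should state explicitly is that the third bound uses the paper's standing assumption $\nu\le 1$, since the term $\nu^{\alpha}\norm{\xi_i^z}_{W^{1,\infty}}\norm{f}$ coming from $\nu^\alpha f^3\nabla\xi_i^3$ must be absorbed into $c\norm{\xi_i}_{W^{1,\infty}}\norm{f}$ with $c$ independent of $\nu$.
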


In addition, we will need the following control in energy estimates to handle the contributions of the It\^{o}-Stratonovich corrector and quadratic variation.

\begin{lemma}
    For any parameter $0 < \delta$, there exists a constant $c_{\delta}$ such that, for all $f\in H^2 \cap W^{1,2}_{\sigma}$,
        \begin{align}
\inner{\mathcal{P}\tilde{\mathcal{G}}_i\mathcal{P}\tilde{\mathcal{G}}_if}{f} +  \norm{\mathcal{P}\tilde{\mathcal{G}}_if}^2  &\leq 
\norm{\xi_i}_{W^{1,\infty}}^2\left(c_{\delta}\norm{f}^2 + \delta\norm{\nabla^hf}^2 +  \delta\nu^{2\alpha}\norm{\partial_3f}^2 \right) \label{bigbound1NEW}\\
    \inner{\tilde{\mathcal{G}}_if}{f}^2 &\leq c\norm{\xi_i}^2_{W^{1,\infty}}\norm{f}^4. \label{bigbound2NEW}
\end{align}
\end{lemma}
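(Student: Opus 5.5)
The plan is to reduce both bounds to the adjoint identities of Lemma \ref{unnamed}, always placing the zero-trace function $f \in W^{1,2}_{\sigma}$ in the slot that requires vanishing on $\partial\mathcal{O}$. This is precisely why we never need $\tilde{\xi}_i \cdot \underline{n} = 0$. The point is that Lemma \ref{unnamed} only asks that \emph{one} of the two functions vanish on the boundary. The regularity $f \in H^2$ ensures that $\tilde{\mathcal{G}}_i f \in H^1$, and hence $\mathcal{P}\tilde{\mathcal{G}}_if \in H^1$ (on this domain I would take the Leray-type projection to preserve $H^1$), so all pairings make sense.

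For (\ref{bigbound1NEW}), I first use that $\mathcal{P}$ is self-adjoint and $f \in L^2_\sigma$:
$$\inner{\mathcal{P}\tilde{\mathcal{G}}_i\mathcal{P}\tilde{\mathcal{G}}_if}{f} = \inner{\tilde{\mathcal{G}}_i\mathcal{P}\tilde{\mathcal{G}}_if}{f}.$$
Next I apply Lemma \ref{unnamed} to $\tilde{\xi}_i$, which is divergence-free and lies in $W^{1,\infty}$, with the pair $(\mathcal{P}\tilde{\mathcal{G}}_if, f)$, where $f$ vanishes on $\partial\mathcal{O}$. This gives
$$\inner{\tilde{\mathcal{G}}_i\mathcal{P}\tilde{\mathcal{G}}_if}{f} = \inner{\mathcal{P}\tilde{\mathcal{G}}_if}{\tilde{\mathcal{G}}_i^*f}.$$
Writing $\norm{\mathcal{P}\tilde{\mathcal{G}}_if}^2 = \inner{\mathcal{P}\tilde{\mathcal{G}}_if}{\tilde{\mathcal{G}}_if}$, the left-hand side of (\ref{bigbound1NEW}) becomes
$$\inner{\mathcal{P}\tilde{\mathcal{G}}_if}{(\tilde{\mathcal{G}}_i + \tilde{\mathcal{G}}_i^*)f} = \inner{\mathcal{P}\tilde{\mathcal{G}}_if}{(\mathcal{S}_{\tilde{\xi}_i} + \mathcal{S}_{\tilde{\xi}_i}^*)f},$$
since $\mathcal{T}^*_{\tilde{\xi}_i} = -\mathcal{T}_{\tilde{\xi}_i}$ cancels the transport parts exactly. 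The key observation is that the zeroth-order operator $(\mathcal{S}_{\tilde{\xi}_i} + \mathcal{S}^*_{\tilde{\xi}_i})f = f^j\nabla\tilde{\xi}_i^j + \mathcal{T}_f\tilde{\xi}_i$ involves only first derivatives of $\tilde{\xi}_i$. Using $\nu \le 1$, this gives
$$\norm{(\mathcal{S}_{\tilde{\xi}_i} + \mathcal{S}^*_{\tilde{\xi}_i})f} \le c\norm{\xi_i}_{W^{1,\infty}}\norm{f}.$$
For the other factor I use that $\mathcal{P}$ is a contraction together with Lemma \ref{bounds on split noise}:
$$\norm{\mathcal{P}\tilde{\mathcal{G}}_if} \le c\norm{\xi_i}_{W^{1,\infty}}\left(\norm{\nabla^hf} + \nu^\alpha\norm{\partial_3f} + \norm{f}\right).$$
Cauchy--Schwarz followed by Young's inequality with parameter $\delta$ then yields (\ref{bigbound1NEW}), with $c_\delta$ independent of $\nu$.

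For (\ref{bigbound2NEW}), I split $\tilde{\mathcal{G}}_i = \mathcal{T}_{\tilde{\xi}_i} + \mathcal{S}_{\tilde{\xi}_i}$. The transport part vanishes, because Lemma \ref{unnamed} with $g = f$ (zero trace) gives $\inner{\mathcal{T}_{\tilde{\xi}_i}f}{f} = -\inner{f}{\mathcal{T}_{\tilde{\xi}_i}f}$, so this pairing is zero. The stretching part satisfies $|\inner{\mathcal{S}_{\tilde{\xi}_i}f}{f}| \le c\norm{\xi_i}_{W^{1,\infty}}\norm{f}^2$, and squaring gives the claim.

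The main obstacle I expect is the first step: justifying the adjoint identity when $\mathcal{P}\tilde{\mathcal{G}}_if$ has no zero trace and $\tilde{\xi}_i$ is not tangential at the plates. My resolution is to put all boundary requirements on $f$, using the approximation of $f$ by compactly supported fields as in Lemma \ref{unnamed}. The only other point to check is that the nonstandard $\mathcal{P}$, which also removes the horizontal mean, is never commuted past $\tilde{\mathcal{G}}_i$. In the argument above it is only used through self-adjointness and being a contraction on $L^2$.
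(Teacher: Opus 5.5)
Your proof is correct and follows essentially the same route as the paper. You move $\mathcal{P}$ onto $f$, use the adjoint identities of Lemma \ref{unnamed} with the zero-trace $f$ to cancel the transport parts, and reduce to $\inner{\mathcal{P}\tilde{\mathcal{G}}_if}{(\mathcal{S}_{\tilde{\xi}_i}+\mathcal{S}_{\tilde{\xi}_i}^*)f}$, which you bound by Cauchy--Schwarz, Lemma \ref{bounds on split noise} and Young; for the second bound you use $\inner{\mathcal{T}_{\tilde{\xi}_i}f}{f}=0$. Your remarks that $\mathcal{P}\tilde{\mathcal{G}}_if\in H^1$ and that only $f$ needs zero trace make explicit what the paper leaves tacit; the $H^1$ claim uses $\xi_i\in W^{2,\infty}$, which the standing assumption provides.
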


\begin{proof}
    In the direction of (\ref{bigbound1NEW}), we have that
    \begin{align*} \nonumber
\inner{\mathcal{P}\tilde{\mathcal{G}}_i\mathcal{P}\tilde{\mathcal{G}}_if}{f} +  \norm{\mathcal{P}\tilde{\mathcal{G}}_if}^2 &= \inner{\mathcal{P}\tilde{\mathcal{G}}_if}{\tilde{\mathcal{G}}_i^*f + \tilde{\mathcal{G}}_if}\\ &= \inner{\mathcal{P}\tilde{\mathcal{G}}_if}{\mathcal{S}_{\tilde{\xi}_i}^*f + \mathcal{S}_{\tilde{\xi}_i}f}\\ &\leq c\norm{\tilde{\mathcal{G}}_if}\norm{\tilde{\xi}_i}_{W^{1,\infty}}\norm{f}\\
&\leq c\norm{\xi_i}_{W^{1,\infty}}^2\norm{f}\left(\norm{\nabla^hf}+\nu^\alpha\|\partial_3f\|+\|f\| \right)
    \end{align*}
so the result follows by applying Young's Inequality. As for (\ref{bigbound2NEW}), observe that
\begin{align*}
    \inner{\tilde{\mathcal{G}}_if}{f}^2 \leq 2\inner{\mathcal{T}_{\tilde{\xi}_i}f}{f}^2 + 2\inner{\mathcal{S}_{\tilde{\xi}_i}f}{f}^2 \leq 2\inner{\mathcal{T}_{\tilde{\xi}_i}f}{f}^2 + c\norm{\tilde{\xi}_i}^2_{W^{1,\infty}}\norm{f}^4
\end{align*}
from which we simply use the cancellation $\inner{\mathcal{T}_{\tilde{\xi}_i}f}{f} = 0$ to conclude.
\end{proof}
The constants in the above lemmas are of course independent of $\xi_i$ and $\nu$. 

\section{The Main Result}

This section is dedicated to the statement and proof of the main result. Subsection \ref{subs def and state} introduces the key definitions and states the main result, Theorem \ref{main result}. To facilitate the proof, we establish the solution theory and obtain a priori estimates for the limit equation in Subsection \ref{subs a priori limit}. Martingale weak solutions of the stochastic Navier-Stokes-Coriolis equations are constructed in Subsection \ref{subs selection martingale weak}. The main result is then proved in Subsection \ref{subs proof of main}.

\subsection{Definitions and Statement of the Main Result} \label{subs def and state}

For the remainder of this work, we fix an arbitrary time interval $[0,T]$ on which our analysis falls. We recall that the regularity of the spatial correlation functions $(\xi_i)$ was fixed in Subsection \ref{subs transport stretch}. Let us begin by defining the notion of a martingale weak solution of the equation (\ref{expanded ito form}). We shall use the more compact representation (\ref{some main equation 2 Ito}). 

\begin{definition} \label{definitionofspacetimeweakmartingale}
Let $\mathcal{S}$ be a filtered probability space and $u_0 \in L^2_{\sigma}$ be deterministic. A pair $(u, \mathcal{W})$, where $\mathcal{W}$ is a Cylindrical Brownian Motion over $\mathfrak{U}$ with respect to $\mathcal{S}$, and $u$ is a progressively measurable process in $W^{1,2}_{\sigma}$ such that  $u \in  L^{\infty}\left([0,T];L^2_{\sigma}\right) \cap L^2\left([0,T];W^{1,2}_{\sigma}\right)$ $\mathbbm{P}-a.s.$, is said to be a martingale weak solution of (\ref{expanded ito form}) with respect to $\mathcal{S}$ and $u_0$ if the identity
\begin{align*}
     \inner{u_t}{\phi} = \inner{u_0}{\phi} &- \int_0^t\inner{B(u_s, u_s)}{\phi} ds  - \int_0^t \sum_{j=1}^2\inner{\partial_j u_s}{\partial_j \phi} ds - \nu\int_0^t\inner{\partial_3 u_s}{\partial_3\phi} ds\\ & -\int_0^t\inner{ \frac{e_3 \wedge u_s}{\varepsilon}}{\phi} ds +\frac{1}{2} \int_0^t\sum_{i=1}^\infty\inner{\mathcal{P}\tilde{\mathcal{G}}_iu_s}{\tilde{\mathcal{G}}_i^*\phi}ds -  \int_0^t\inner{ \tilde{\mathcal{G}} u_s}{\phi} d\mathcal{W}_s
\end{align*}
holds for every $\phi \in W^{1,2}_{\sigma}$, $\mathbbm{P}-a.s.$ in $\R$, for all $t \in [0,T]$.
\end{definition}


\begin{definition} \label{definitionofstrongsolution2d}
Let $\mathcal{S}$ be a filtered probability space, $w_0 \in W^{1,2}_{\sigma,h}$ be deterministic,  and $\mathcal{W}$ a Cylindrical Brownian Motion over $\mathfrak{U}$ with respect to $\mathcal{S}$ . A process $w$ is said to be a strong solution of (\ref{Ito form for w}) with respect to $\mathcal{S}$, $w_0$ and $\mathcal{W}$ if $w$ is progressively measurable in $W^{2,2}_{\sigma,h}$, $w \in  C\left([0,T];W^{1,2}_{\sigma,h}\right) \cap L^2\left([0,T];W^{2,2}_{\sigma,h}\right)$ $\mathbbm{P}-a.s.$, and satisfies the identity
$$w_t = w_0 - \int_0^tB(w_s, w_s) ds - \int_0^t A_h w_s\, ds  - \sqrt{2\beta}\int_0^tw_s \, ds  - \int_0^t\mathcal{P} \mathcal{G}^h w_s \, d\mathcal{W}_s + \frac{1}{2}\int_0^t\sum_{i=1}^\infty \mathcal{P}\mathcal{G}^h_i\mathcal{G}^h_iw_sds$$
$\mathbbm{P}-a.s.$ in $L^2_{\sigma,h}$, for all $t \in [0,T]$. The solution is said to be unique if for any other solution $v$, $w$ and $v$ are indistinguishable.
\end{definition}

As discussed in Subsection \ref{subs funct anal}, we will consider the solution $w$ as an element of $\bar{W}^{1,2}_{\sigma}$ through its trivial extension. We are now set up to state the main result.

\begin{theorem} \label{main result}
    Let $u_0 \in W^{1,2}_{\sigma,h}$ be deterministic, $\frac{1}{2} < \alpha$, and $(\nu^k)$, $(\varepsilon^k)$ be any two sequences of positive constants converging to zero such that $\left(\frac{\nu^k}{\varepsilon^k} \right)$ converges to $0 \leq \beta < \infty$. Then there exists a filtered probability space $\mathcal{S}$ and a Cylindrical Brownian Motion $\mathcal{W}$ over $\mathfrak{U}$ with respect to $\mathcal{S}$, such that:
    \begin{enumerate}
        \item For every $k \in \N$, there exists a process $u^k$ such that $(u^k,\mathcal{W})$ is a martingale weak solution of (\ref{expanded ito form}) with respect to $\mathcal{S}$ and $u_0$, for $\nu = \nu^k$ and $\varepsilon = \varepsilon^k$;
        \item There exists a unique strong solution $w$ of (\ref{Ito form for w}) with respect to $\mathcal{S}$, $u_0$ and $\mathcal{W}$;
        \item $(u^k) \longrightarrow w$ in $L^2\left(\Omega;L^\infty\left([0,T];L^2_{\sigma}\right)\right)$ as $k \longrightarrow \infty$.
    \end{enumerate}
    
\end{theorem}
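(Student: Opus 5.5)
The proof splits into four stages: well-posedness and deterministic bounds for the limit equation; Galerkin approximations of (\ref{expanded ito form}) with one joint application of Skorokhod's theorem; passage to the limit in the Galerkin index to obtain martingale weak solutions; and an energy estimate on $u-w-\BB[w]$.

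\textbf{Limit equation.} I would first show that (\ref{Ito form for w}) has a unique strong solution on any filtered space carrying a Cylindrical Brownian Motion. The approach is the standard Galerkin/monotonicity argument in 2D. The key structural point is that the curl of $\mathcal{G}^h_i w$ equals $\mathcal{T}_{\xi_i^h}(\curl w)$. At the vorticity level the noise is therefore pure transport, and in the $L^2$ identity for $\curl w$ the It\^{o}--Stratonovich corrector cancels exactly against the quadratic variation. Combined with the $L^2$ estimate, this gives the pathwise bound
$$\sup_{t\in[0,T]}\norm{w_t}_{H^1}^2+\int_0^T\norm{w_s}_{H^2}^2ds\le C\norm{u_0}_{H^1}^2 .$$
For smoother data $w^m_0\in H^2$ with $w^m_0\to u_0$ in $H^1$, the same vorticity argument one order higher uses $\xi_i\in W^{4,\infty}$ and gives $w^m\in C_tH^2\cap L^2_tH^3$. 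Stability of the limit equation in $L^\infty_tL^2$ gives $w^m\to w$ in $L^2_\Omega L^\infty_tL^2$ as $m\to\infty$.

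\textbf{Galerkin approximations and Skorokhod.} Next I construct the Galerkin approximations $u^{n,k}$ of (\ref{expanded ito form}) using the eigenbasis $\mathcal{P}_n$. Testing against $u^{n,k}$, the Coriolis and nonlinear terms cancel. The corrector and quadratic variation are absorbed via (\ref{bigbound1NEW}), and the martingale is handled with BDG and (\ref{bigbound2NEW}). This gives bounds on $u^{n,k}$ in $L^p_\Omega(L^\infty_tL^2\cap L^2_tH^1)$ that are uniform in $n,k$, with $\nu\norm{\partial_3u}^2$ controlled. Fractional time regularity in a negative space then yields tightness for fixed $k$ (the $\nu\partial_3$ and $\varepsilon^{-1}$ terms are harmless for fixed $k$). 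Following \cite{breit-hofmanova2016}, I apply Skorokhod's theorem once to the whole family
$$\bigl(u^{n,k},\,w^{n,m},\,\mathcal{W}^n\bigr)_{n,k,m},$$
where $w^{n,m}$ is the strong solution driven by $\mathcal{W}^n$ with data $w_0^m$. This produces a common space $\mathcal{S}$ and a Brownian motion $\mathcal{W}$. The standard identification of the martingale then shows that each limit $u^k$ is a martingale weak solution driven by $\mathcal{W}$, and that the limit of $w^{n,m}$ is the unique strong solution with respect to $\mathcal{W}$. This gives items 1 and 2.

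\textbf{Energy estimate.} The core is an estimate for $v=u^{n,k}-w^{n,m}-\mathcal{P}_n\BB[w^{n,m}]$ (or with $\BB$ placed suitably so the difference is admissible). Writing the It\^{o} equation for $\BB[w]$ by applying the linear operator $\BB$ term by term to (\ref{Ito form for w}), I compute $d\norm{v}^2$. The terms are handled as follows:
\begin{itemize}
\item The viscous--Coriolis interaction with the corrector is removed by (\ref{eq:profiles}) and (\ref{eq:profile4}). This is precisely what generates the damping $\sqrt{2\nu/\varepsilon}\,w\to\sqrt{2\beta}\,w$.
\item The remaining $\BB$ terms are small by (\ref{eq:Bnorm}), (\ref{eq:B3derivative}) and (\ref{eq:Bdual}).
\item The nonlinear term produces $\int\norm{w}_{H^2}^2\norm{v}^2$ via Lemma \ref{lem:auxiliary-est}, plus small terms.
\item The noise terms use (\ref{bigbound1NEW}). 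Cross terms between the noise on $u$ and on $w$ are treated through the same operator bounds, with the vertical contribution being of size $\nu^{\alpha}\norm{\partial_3u}$ times boundary-layer quantities.
\item Galerkin residuals vanish as $n\to\infty$ using $w^m\in L^2_tH^3$.
\end{itemize}
Since $\int_0^T\norm{w}_{H^2}^2$ is bounded deterministically, Lemma \ref{gronny} applies and gives $\E\sup_t\norm{v}^2\le o(1)$ as $k\to\infty$, up to errors vanishing as $n\to\infty$ and then $m\to\infty$. Passing $n\to\infty$ uses lower semicontinuity, $m\to\infty$ uses the stability of $w^m$, and $\norm{\BB[w]}\to0$ by (\ref{eq:Bnorm}). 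Together these give item 3.

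\textbf{Main obstacle.} I expect the main difficulty to be the vertical noise acting on the boundary layer. The corrector has $\norm{\partial_3\BB}\sim(\varepsilon\nu)^{-1/4}$, so terms like $\nu^{\alpha}\xi^3\partial_3\BB$ and $\nu^{2\alpha}\mathcal{G}^z\mathcal{G}^z\BB$ are only $O\bigl(\nu^{\alpha}(\varepsilon\nu)^{-1/4}\bigr)$ and $O\bigl(\nu^{2\alpha}(\varepsilon\nu)^{-3/4}\bigr)$ in $L^2$. Because $\varepsilon\sim\nu/\beta$, this requires $\alpha>1/2$. I would first estimate these terms only after testing against $v$ and integrating by parts, exploiting $v=0$ on $\partial\mathcal{O}$ together with the weighted bounds on $d\,\partial_3\mathcal M$ and $d^2\partial_3\mathcal M$ and Hardy's inequality $\norm{v/d}\le c\norm{\partial_3 v}$, absorbing into $\nu\norm{\partial_3v}^2$. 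In the case $\beta=0$ with $\alpha=1/2$ this absorption is presumably still possible because $\varepsilon\gg\nu$.
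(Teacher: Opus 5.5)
Your proposal is correct and follows essentially the paper's route: vorticity-level deterministic bounds for $w$, a single Breit--Hofmanov\'a Skorokhod step on the Galerkin family jointly with smoothed limit solutions, and a Galerkin-level energy estimate on $u-w-\BB[w]$ using the Ekman cancellations and (\ref{bigbound1NEW}), with the noise on $\BB$ handled by moving one $\tilde{\mathcal{G}}_i$ onto $v$ and absorbing $\nu^{\alpha}\norm{\partial_3 v}$ into the viscosity (the paper needs no Hardy inequality there), followed by Lemma \ref{gronny} and the limits in $n$, $m$, $k$; using the $\sqrt{2\beta}$-damped $w$ directly instead of the paper's intermediate $w^k$ only adds the harmless residual $|\sqrt{2\nu/\varepsilon}-\sqrt{2\beta}|\,\norm{w}\norm{v}$. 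Two details need fixing: the admissible difference is $v=u-w-\BB[w]$, since your first option $u-w-\PP_n\BB[w]$ equals $-w\neq0$ on $\partial\OO$; and the nonlinear term $\inner{v^3\partial_3\BB^h}{v^h}$ is not covered by Lemma \ref{lem:auxiliary-est}, because $\partial_3\mathcal{M}^h$ is of size $(\varepsilon\nu)^{-1/2}$ at the plates, so it needs the boundary-vanishing argument you reserve for the noise, as in the paper's bound $\norm{v^3(\cdot,z)}_{L^2_h}\le\sqrt{d(z)}\norm{\nabla^hv}$ combined with $\norm{\partial_3\mathcal{M}}_{L^2_z}\norm{d\,\partial_3\mathcal{M}}_{L^2_z}\le c$.
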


We remark that the result continues to hold if $\alpha = \frac{1}{2}$ and $\beta = 0$, though we exclude this case from the proof for simplicity. We comment on adapting the proof to this case after the proof's conclusion.

\subsection{A Priori Estimates for the Limit Equation} \label{subs a priori limit}

Henceforth let us fix the assumptions of Theorem \ref{main result}, that is a deterministic $u_0 \in W^{1,2}_{\sigma,h}$, $\frac{1}{2} < \alpha$, and two sequences of positive constants $(\nu^k)$, $(\varepsilon^k)$ converging to zero such that $\left(\frac{\nu^k}{\varepsilon^k} \right)$ converges to $0 \leq \beta < \infty$. To facilitate a smooth approximation, let us also fix any sequence $(w^m_0)$, $w^m_0 \in W^{2,2}_{\sigma,h}$ convergent to $u_0$ in $W^{1,2}_{\sigma,h}$.

\begin{lemma} \label{strong existence for 2D NS}
    Let $\mathcal{S}$ be a filtered probability space and $\mathcal{W}$ a Cylindrical Brownian Motion over $\mathfrak{U}$ with respect to $\mathcal{S}$. Then there exists a unique strong solution of (\ref{Ito form for w}) with respect to $\mathcal{S}$, $u_0$ and $\mathcal{W}$. Furthermore for any $m$, the unique strong solution of (\ref{Ito form for w}) with respect to $\mathcal{S}$, $w^m_0$ and $\mathcal{W}$ belongs to $C\left([0,T];W^{2,2}_{\sigma,h}\right) \cap L^2\left([0,T];W^{3,2}_{\sigma,h}\right)$ $\mathbbm{P}-a.s.$.
\end{lemma}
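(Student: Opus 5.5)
The plan is to pass to the vorticity formulation and exploit that, in 2D, transport-stretching noise at the velocity level becomes pure transport noise at the vorticity level. For $w$ divergence-free and zero-mean on $\T^2$, set $\omega = \curl w = \partial_1 w^2 - \partial_2 w^1$. A direct computation, using $\nabla^h\cdot\xi_i^h = 0$ (note that $\xi_i^h$ is divergence-free in 2D because $\xi_i^3$ is independent of $z$), gives
\[
\curl \mathcal{G}^h_i w = \mathcal{T}_{\xi_i^h}\omega, \qquad \curl \mathcal{P}\mathcal{G}^h_i\mathcal{G}^h_i w = \mathcal{T}_{\xi_i^h}^2\omega ,
\]
and $\curl B(w,w) = \mathcal{T}_w\omega$. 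Moreover $\curl$ annihilates gradients and constants, so the projection disappears. Thus (\ref{Ito form for w}) becomes
\[
d\omega = -\mathcal{T}_w\omega\,dt + \Delta_h\omega\,dt - \sqrt{2\beta}\,\omega\,dt - \sum_i \mathcal{T}_{\xi_i^h}\omega\,dW^i + \tfrac12\sum_i \mathcal{T}_{\xi_i^h}^2\omega\,dt,
\]
with $w$ recovered from $\omega$ by the Biot--Savart law, which is an isomorphism $H^{s}\to W^{s+1,2}_{\sigma,h}$.

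Existence and uniqueness follow by a standard Galerkin scheme on Fourier modes, or by invoking the abstract strong solution framework of [\cite{goodair2024stochastic}]. The key a priori estimate comes from applying Itô's formula to $\norm{\omega}^2$. The nonlinear term vanishes by antisymmetry of $\mathcal{T}_w$, as in Lemma \ref{unnamed} on the torus, where there is no boundary. The Itô--Stratonovich corrector and the quadratic variation also cancel exactly:
\[
\inner{\mathcal{T}_{\xi_i^h}^2\omega}{\omega} + \norm{\mathcal{T}_{\xi_i^h}\omega}^2 = 0,
\]
and the martingale term $\inner{\mathcal{T}_{\xi_i^h}\omega}{\omega}=0$ vanishes. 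Hence $\norm{\omega_t}^2 + 2\int_0^t\norm{\nabla\omega_s}^2ds \le \norm{\omega_0}^2$ holds pathwise, which is the deterministic bound on $\sup_t\norm{w_t}_{H^1}^2 + \int_0^T\norm{w_s}_{H^2}^2ds$ announced in the introduction. At the Galerkin level the Fourier truncation commutes with $\Delta_h$ but not with $\mathcal{T}_{\xi_i^h}$. I would therefore work either with smooth mollified coefficients or with the abstract framework of [\cite{goodair2024stochastic}], using the bound via $\norm{\xi_i}_{W^{1,\infty}}$ and a commutator estimate, rather than expecting exact cancellation at finite order.

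Uniqueness is the standard 2D argument. For two solutions, the $L^2$ estimate on the velocity difference $v$ uses Lemma \ref{bounds on split noise} and (\ref{bigbound1NEW}), which applies since in 2D the weak commutativity holds. Ladyzhenskaya's inequality gives $|\inner{B(v,w)}{v}|\le \delta\norm{\nabla v}^2 + c_\delta\norm{w}_{H^1}^2\norm{v}^2$. Gronwall's lemma, which is pathwise-integrable thanks to the deterministic $H^1$ bound on $w$, combined with the Stochastic Gr\"onwall Lemma \ref{gronny}, then gives indistinguishability. Continuity in time into $W^{1,2}_{\sigma,h}$ follows from the energy identity ([\cite{liu2015stochastic}] Theorem 4.2.5) in the Gelfand triple $H^2\subset H^1\subset L^2$.

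For the higher regularity with $w^m_0\in W^{2,2}_{\sigma,h}$, I would apply Itô's formula to $\norm{\nabla\omega}^2$. The main obstacle lies here: the noise no longer cancels exactly. Differentiating $\mathcal{T}_{\xi_i^h}\omega$ produces commutators $[\partial_j,\mathcal{T}_{\xi_i^h}]\omega = \mathcal{T}_{\partial_j\xi_i^h}\omega$, which are zero order in $\nabla\omega$. The combination of corrector and quadratic variation then reduces to terms bounded by $c\norm{\xi_i}_{W^{2,\infty}}^2\norm{\omega}_{H^1}^2$; this is where the assumed $W^{4,\infty}$ summability enters, with room to spare. The nonlinear term is handled by $|\inner{\nabla(\mathcal{T}_w\omega)}{\nabla\omega}| = |\inner{\nabla w\cdot\nabla\omega}{\nabla\omega}| \le c\norm{\nabla w}_{L^4}\norm{\nabla\omega}_{L^4}\norm{\nabla\omega}$, and by Ladyzhenskaya this is at most $\delta\norm{\nabla^2\omega}^2 + c\norm{\omega}_{H^1}^2\norm{\nabla\omega}^2$. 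Since $\int_0^T\norm{\omega}_{H^1}^2ds$ is deterministically bounded, Lemma \ref{gronny} (or pathwise Gronwall with BDG on the martingale) yields $w\in L^\infty_tW^{2,2}\cap L^2_tW^{3,2}$. Continuity into $W^{2,2}_{\sigma,h}$ again follows from the energy identity one level up.
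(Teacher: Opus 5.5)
Your proposal is correct in substance, but it takes a different route from the paper, which proves nothing directly here. The paper cites [\cite{goodair2024weak}] Theorem 5.4 for existence and uniqueness (stated there without damping and for Navier boundary conditions) and [\cite{goodair2026high}] Proposition 4.7 for the $W^{2,2}$/$W^{3,2}$ regularity, and asserts that the damping does not disturb those proofs.

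You instead build everything from the 2D structure: $\curl$ turns transport-stretching noise into pure transport of vorticity. The paper only exploits this afterwards, in Lemma \ref{uniform bounds on w}, for the deterministic $H^1$ bound. Your route buys transparency. The damping visibly enters with a good sign at every level. Your higher-order estimate is also more elementary than the velocity-level $H^2$ bounds (\ref{bigbound1})--(\ref{bigbound2}) imported from [\cite{goodair2025closed}]. Indeed, after using antisymmetry of $\mathcal{T}_{\xi_i^h}$ and of $\mathcal{T}_{\partial_j\xi_i^h}$, the top-order terms combine exactly into $\inner{\partial_j\omega}{[\mathcal{T}_{\partial_j\xi_i^h},\mathcal{T}_{\xi_i^h}]\omega}+\norm{\mathcal{T}_{\partial_j\xi_i^h}\omega}^2$, which confirms your $W^{2,\infty}$ claim.

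What the citation buys is what you label standard:
\begin{enumerate}
\item Compactness of Galerkin approximations only gives martingale solutions. A solution on the given $\mathcal{S}$ with the given $\mathcal{W}$ needs your pathwise uniqueness plus a Gy\"ongy--Krylov or Yamada--Watanabe argument, or the direct Cauchy argument of the abstract framework.
\item The It\^o formula for $\norm{\nabla\omega}^2$ is only formal for $w$ itself. It must be run on approximations and transferred to the strong solution through uniqueness.
\end{enumerate}

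One correction to your Galerkin remark: no commutator estimate is needed. A commutator estimate between the sharp Fourier truncation $P_n$ and $\mathcal{T}_{\xi_i^h}$ would not be uniform in $n$ anyway. Since $\omega^n=P_n\omega^n$,
\[
\inner{P_n\mathcal{T}_{\xi_i^h}^2\omega^n}{\omega^n}+\norm{P_n\mathcal{T}_{\xi_i^h}\omega^n}^2=-\norm{(I-P_n)\mathcal{T}_{\xi_i^h}\omega^n}^2\le 0,\qquad \inner{P_n\mathcal{T}_{\xi_i^h}\omega^n}{\omega^n}=0.
\]
So the pathwise vorticity bound survives at each finite order as an inequality, and as an equality with the Stratonovich corrector $P_n\mathcal{T}_{\xi_i^h}P_n\mathcal{T}_{\xi_i^h}$. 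Because $P_n$ commutes with $\nabla$ on $\T^2$, the same observation bounds the Galerkin-level corrector and quadratic variation in the $\norm{\nabla\omega^n}^2$ estimate by the untruncated expression that your commutator computation handles.
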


\begin{proof}
    In the absence of damping, the existence result was established in [\cite{goodair2024weak}] Theorem 5.4 (stated for the case of Navier boundary conditions, though the proof immediately transfers to the torus) and the smoothness result in [\cite{goodair2026high}] Proposition 4.7. The additional damping does not harm the proofs in any way, so we conclude here. 
\end{proof}

For additional clarity, let us introduce the equations with explicit $k$ dependence:
\begin{align}
    du^k_t = - B(u^k_t, u^k_t) dt - A_h u^k_t\, dt - \nu^k A_{z} u^k_t\, dt  - \mathcal{P} \frac{e_3 \wedge u^k_t}{\varepsilon^k}dt -  \mathcal{P} \tilde{\mathcal{G}} u^k_t \, d\mathcal{W}_t + \frac{1}{2}\sum_{i=1}^\infty \mathcal{P}\tilde{\mathcal{G}}_i\mathcal{P}\tilde{\mathcal{G}}_iu^k_t dt,\label{some main equation 2 Ito k}
\end{align}
as well as 
\begin{equation}
    dw^k_t = - B(w^k_t, w^k_t) dt - A_h w^k_t\, dt  - \sqrt{\frac{2\nu^k}{\varepsilon^k}}w^k_t \, dt  - \mathcal{P} \mathcal{G}^h w^k_t \, d\mathcal{W}_t + \frac{1}{2}\sum_{i=1}^\infty \mathcal{P}\mathcal{G}^h_i\mathcal{G}^h_iw^k_tdt. \label{Ito form for w k}
\end{equation}
The solutions $(w^k)$ will be required in the proof to match the damping to the rotation of $(u^k)$, as in the boundary layer expansion. Therefore $w$ will be approximated in both $k$ for the damping coefficient, and $m$ for the initial condition. The remainder of this subsection addresses the necessary uniform bounds and convergence in these variables, starting with the uniform bounds.

\begin{lemma} \label{uniform bounds on w}
    There exists a constant $C$ such that for any filtered probability space $\mathcal{S}$, Cylindrical Brownian Motion $\mathcal{W}$ and any $k$, the unique strong solution $w^k$ of (\ref{Ito form for w k}) with respect to $\mathcal{S}$, $u_0$ and $\mathcal{W}$ satisfies
    \begin{equation}\label{Estimate1}\sup_{t\in[0,T]}\norm{w^k_t}_{H^1}^2 + \int_0^T\norm{w^k_s}_{H^2}^2ds \leq C\norm{u_0}_{H^1}^2.\end{equation}
Furthermore for any $m$, the unique strong solution $w^{k,m}$ of (\ref{Ito form for w k}) with respect to $\mathcal{S}$, $w^m_0$ and $\mathcal{W}$ satisfies the bound
\begin{equation}
    \label{Estimate2}
    \mathbbm{E}\left[\sup_{t\in[0,T]}\norm{w^{k,m}_t}_{H^2}^2 + \int_0^T\norm{w^{k,m}_s}_{H^3}^2ds \right] \leq C\norm{w^m_0}_{H^2}^2.
\end{equation}
\end{lemma}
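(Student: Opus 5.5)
The plan is to pass to vorticity. In 2D the transport-stretching noise at velocity level becomes pure transport noise at vorticity level, so the $H^1$ estimate becomes an $L^2$ vorticity estimate in which the noise cancels pathwise. Set $\omega^k = \curl w^k$. Applying $\curl$ to (\ref{Ito form for w k}) and using $\curl \mathcal{P} = \curl$ (the projection only removes gradients and constants), together with the identity $\curl(\mathcal{G}_{\phi}f) = \mathcal{T}_{\phi}\curl f$ for divergence-free horizontal $\phi$, gives formally
$$d\omega^k_t = -(w^k_t\cdot\nabla)\omega^k_t\,dt + \Delta_h\omega^k_t\,dt - \sqrt{2\nu^k/\varepsilon^k}\,\omega^k_t\,dt - \mathcal{T}_{\xi^h}\omega^k_t\,d\mathcal{W}_t + \frac12\sum_i \mathcal{T}_i^2\omega^k_t\,dt .$$
Here $\mathcal{T}_i = \mathcal{T}_{\xi^h_i}$. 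The justification is that the weak commutativity $\mathcal{P}\mathcal{G}^h_i\mathcal{P}=\mathcal{P}\mathcal{G}^h_i$ holds on the torus, and $\curl$ annihilates the gradient and mean parts.

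\textbf{Step 1: energy identity and pathwise bound.} By Lemma \ref{strong existence for 2D NS}, $w^k\in C_tW^{1,2}\cap L^2_tW^{2,2}$, so $\omega^k\in C_tL^2\cap L^2_tH^1$. The Itô formula for $\norm{\omega^k}^2$ is then justified by the energy identity in the Gelfand triple $H^1\subset L^2\subset H^{-1}$ on $\T^2$. Several terms vanish. The nonlinear term vanishes by antisymmetry of transport by the divergence-free $w^k$. The martingale term is $2\inner{\mathcal{T}_i\omega}{\omega}=0$, again by antisymmetry, since $\xi^h_i$ is horizontally divergence-free on the torus (Lemma \ref{unnamed}, with no boundary). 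The corrector and quadratic variation combine to $\inner{\mathcal{T}_i^2\omega}{\omega}+\norm{\mathcal{T}_i\omega}^2=0$. The damping term is nonpositive. This yields the deterministic, $k$-uniform identity
$$\norm{\omega^k_t}^2 + 2\int_0^t\norm{\nabla^h\omega^k_s}^2ds \le \norm{\curl u_0}^2 .$$
For divergence-free, mean-zero fields we have $\norm{w}_{H^1}\simeq\norm{\curl w}$ and $\norm{w}_{H^2}\simeq\norm{\curl w}_{H^1}$, which gives (\ref{Estimate1}). The constant depends on neither $\mathcal{S}$, $\mathcal{W}$, nor $k$, because $\sqrt{2\nu^k/\varepsilon^k}\ge0$ only helps.

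\textbf{Step 2: the $H^2$ bound (\ref{Estimate2}).} Apply Itô to $\norm{\nabla^h\omega^{k,m}}^2$. This is justified because $\omega^{k,m}\in C_tH^1\cap L^2_tH^2$ by Lemma \ref{strong existence for 2D NS}. The noise no longer cancels exactly, but it leaves commutators. Writing $\partial_j\mathcal{T}_i\omega=\mathcal{T}_i\partial_j\omega+\mathcal{T}_{\partial_j\xi_i}\omega$, the corrector plus quadratic variation reduces to terms bounded by $c\norm{\xi_i}_{W^{2,\infty}}^2\norm{\omega}_{H^1}(\norm{\omega}_{H^1}+\norm{\omega}_{H^2}^{1/2}\cdots)$, or more simply by $c_\delta\norm{\xi_i}^2_{W^{3,\infty}}\norm{\omega}_{H^1}^2+\delta\norm{\omega}_{H^2}^2$. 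This uses the antisymmetry cancellation of the top order $\mathcal{T}_i\partial_j\omega$ terms, as in (\ref{bigbound1NEW}). The nonlinear term gives $|\inner{\partial_j w\cdot\nabla\omega}{\partial_j\omega}|\le c\norm{\nabla w}_{L^4}\norm{\nabla\omega}_{L^4}\norm{\nabla\omega}\le \delta\norm{\omega}_{H^2}^2+c_\delta\norm{w}_{H^2}^2\norm{\omega}_{H^1}^2$ by Ladyzhenskaya. The martingale term is handled by BDG, bounded by $\frac12\mathbb{E}\sup\norm{\nabla\omega}^2+c\,\mathbb{E}\int\norm{\omega}_{H^1}^2$.

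This leads to the main obstacle: the Grönwall weight $\boldsymbol{\eta}=c(1+\norm{w^{k,m}_s}_{H^2}^2)$ is random. Here Step 1 is essential. Applying (\ref{Estimate1}) with initial datum $w^m_0$, the weight satisfies $\int_0^T\boldsymbol{\eta}\,ds\le c'$ with $c'$ uniform in $m$, $\mathbbm{P}$-a.s., because $\norm{w^m_0}_{H^1}$ is bounded. So I would run the estimate between stopping times $\theta\le\theta'$ and apply Lemma \ref{gronny} with $\boldsymbol{\phi}=\norm{\nabla^h\omega^{k,m}}^2$, $\boldsymbol{\psi}=\norm{\omega^{k,m}}_{H^2}^2$ and $\kappa=0$. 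This gives (\ref{Estimate2}) with $C$ independent of $k$, $m$ and $\mathcal{S}$. To guarantee the finiteness hypothesis of Lemma \ref{gronny}, I would first localise with stopping times where $\norm{\omega}_{H^1}$ is bounded and then use monotone convergence.
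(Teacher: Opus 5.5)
Your proposal is correct, and Step 1 is exactly the paper's argument. For (\ref{Estimate2}), however, you take a different route.

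The paper stays at velocity level. It writes the $H^2$ energy identity for $w^{k,m}$ and bounds the nonlinearity crudely by $\norm{(w\cdot\nabla)w}_{H^1}\norm{w}_{H^3}\le c\norm{w}_{H^1}\norm{w}_{H^2}^{1/2}\norm{w}_{H^3}^{3/2}$. This way the pathwise bound (\ref{Estimate1}) enters only through a deterministic constant. It then quotes the $H^2$ transport-stretching bounds (\ref{bigbound1})--(\ref{bigbound2}) from the literature for the noise, and closes with BDG and a classical Gr\"onwall in expectation.

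You instead estimate $\norm{\nabla\omega}^2$, where the transport structure makes everything explicit and self-contained. The nonlinearity reduces to $\inner{(\partial_jw\cdot\nabla)\omega}{\partial_j\omega}$. The corrector plus quadratic variation equals exactly $\norm{[\partial_j,\mathcal{T}_i]\omega}^2+\inner{[[\partial_j,\mathcal{T}_i],\mathcal{T}_i]\omega}{\partial_j\omega}$, with $[\partial_j,\mathcal{T}_i]=\mathcal{T}_{\partial_j\xi_i^h}$. It is therefore bounded by $c\norm{\xi_i}_{W^{2,\infty}}^2\norm{\omega}_{H^1}^2$ with no $\delta\norm{\omega}_{H^2}^2$ term at all. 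As you wrote it, that $\delta$ term would need a factor $\norm{\xi_i}^2$ to be summable in $i$. This route needs fewer derivatives of $\xi_i$ than (\ref{bigbound1}).

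Your random weight $\boldsymbol{\eta}=c(1+\norm{w}_{H^2}^2)$ combined with Lemma \ref{gronny} is legitimate. Indeed, (\ref{Estimate1}) applied to $w^m_0$ gives $\int_0^T\boldsymbol{\eta}\,ds\le c'$ pathwise and uniformly in $m$. It is, however, avoidable. Apply Ladyzhenskaya to both $L^4$ factors and interpolate one factor via $\norm{\omega}_{H^1}\le c\norm{\omega}^{1/2}\norm{\omega}_{H^2}^{1/2}$. This gives $c\norm{w}_{H^1}\norm{\omega}_{H^1}\norm{\omega}_{H^2}\le\delta\norm{\omega}_{H^2}^2+c_\delta\norm{w}_{H^1}^2\norm{\omega}_{H^1}^2$. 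That is precisely the paper's device of putting the deterministic $H^1$ bound into the constant, after which an ordinary Gr\"onwall suffices.

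Finally, your localisation is needed before absorbing $\frac12\mathbb{E}\sup\norm{\nabla\omega}^2$ from BDG, because a priori $w^{k,m}\in C([0,T];W^{2,2}_{\sigma,h})$ holds only almost surely. It therefore addresses a finiteness point that the paper's proof passes over silently.
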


\begin{proof}
In the absence of damping, the bound (\ref{Estimate1}) was shown in Subsection 3.1 of [\cite{goodair2025navier}] and we shall defer some of the details to there. Nevertheless, we present the core idea here. Setting $\eta^k \coloneqq \textnormal{curl}w^k$, then the scalar $\eta^k$ satisfies
$$d\eta^k_t = -(w^k_t \cdot \nabla^h)\eta^k_t \, dt + \Delta^h \eta^k_t dt - \sqrt{\frac{2\nu^k}{\varepsilon^k}} \eta^k_t \, dt - \sum_i \mathcal{T}_{\xi_i^h}\eta^k_t dW^i_t + \frac{1}{2}\sum_{i=1}^\infty \mathcal{T}_{\xi_i^h}^2 \eta^k_t \, dt$$
weakly. The pure transport noise provides a perfect cancellation in the $L^2$ energy computation, whilst the damping term can simply be dropped, leading to the inequality
$$\norm{\eta^k_t}^2 + 2\int_0^t\norm{\nabla^h\eta^k_s}^2ds \leq \norm{\textnormal{curl}u_0}^2$$
where inequality, rather than equality, is due to the damping. By taking the supremum in time, and using equivalence of the $H^1$ norm and $L^2$ norm of the curl, the estimate is proven.

The $H^2$ energy bound is somewhat more involved, so we provide a proof. We will drop the $k,m$ superscripts for the calculations in the proof for notational convenience. Using the energy identity, we have
\begin{align*}
    d\|w_t\|_{H^2}^2 &= -2\|\nabla^h w_t\|_{H^2}^2 dt -2 \inner{\PP (w_t \cdot\nabla )w_t}{w_t}_{H^2} dt-2\inner{\sqrt{\frac{2\nu}{\varepsilon}}w_t}{w_t}_{H^2}dt \\
    &-2\inner{\PP \GG^hw_t}{w_t}_{H^2}d\mathcal{W}_t\\
    &+\sum_i \inner{\PP(\Gi^h)^2w_t}{w_t}_{H^2}dt + \sum_i \norm{\PP \Gi^hw_t}^2_{H^2}dt.
\end{align*}
Let us now treat all the terms on the right hand side. In the first line, we keep the negative diffusion term, drop the negative damping term, and estimate the nonlinear term as follows
\begin{align*}
    |\inner{\PP (w_t \cdot\nabla )w_t}{w_t}_{H^2}| &\le \norm{(w_t \cdot\nabla )w_t}_{H^1} \norm{w_t}_{H^3}.
\end{align*}
We now note 
\begin{align*}
    \norm{(w_t \cdot\nabla )w_t}_{H^1} &\lesssim \norm{ w_t}^2_{W^{1,4}} + \norm{w _t\nabla^2w_t} \\ &\lesssim \norm{w_t}_{H^1}\norm{w_t}_{H^2} + \norm{w_t}^{\frac{1}{2}}\norm{w_t}_{H^1}^{\frac{1}{2}} \norm{w_t}_{H^2}^{\frac{1}{2}}\norm{w_t}_{H^3}^{\frac{1}{2}} \lesssim \norm{w_t}_{H^1} \norm{w_t}_{H^2}^{\frac{1}{2}}\norm{w_t}_{H^3}^{\frac{1}{2}},
\end{align*}
which gives 
\begin{align*}
    |\inner{\PP (w_t \cdot\nabla )w_t}{w_t}_{H^2}| &\le c \norm{w_t}_{H^1} \norm{w_t}_{H^2}^{\frac{1}{2}}\norm{w_t}_{H^3}^{\frac{3}{2}} \le c\norm{w_t}_{H^2}^{\frac{1}{2}}\norm{w_t}_{H^3}^{\frac{3}{2}} \\
    \le \frac{1}{2}\norm{w_t}_{H^3}^2 + c\norm{w_t}_{H^2}^2,
\end{align*}
using that the $H^1$ norm of $w$ is bounded uniformly in time by a deterministic constant, and a Young's inequality in the last line.

Next we treat the martingale term using the BDG inequality, and the It\^o-Stratonovich correction term along with the quadratic variation term. For this purpose we recall the bounds:
\begin{align}
    \inner{\PP\GG_i^2f}{f}_{H^2} +  \norm{\PP\GG_if}_{H^2}^2  &\leq c\norm{\xi_i}_{W^{4,\infty}}^2\norm{f}_{H^2}^2, \label{bigbound1} \\
    \inner{\PP\GG_i f}{f}_{H^2}^2 &\leq c\norm{\xi_i}^2_{W^{3,\infty}}\norm{f}^4_{H^2}, \label{bigbound2} 
\end{align}
which were proved in [\cite{goodair2025closed}].
For the martingale term, we have:
\begin{align*}
    \E \left[ \sup_{s\in[0,t]}\left| \int_0^s\inner{\PP \GG^hw_r}{w_r}_{H^2}d\mathcal{W}_r \right|\right] &\le c \E \left[ \left( \int_0^t \left|\inner{\PP \GG^hw_s}{w_s}_{H^2}\right|^2ds \right)^{\frac{1}{2}} \right] \\
    &\le c \E \left[ \left( \int_0^t \norm{w_s}^4_{H^2} ds \right)^{\frac{1}{2}} \right] \\
    &\le c \E \left[\sup_{s\in [0,t]}\norm{w_s}_{H^2} \left( \int_0^t \norm{w_s}^2_{H^2} ds \right)^{\frac{1}{2}} \right] \\
    &\le \frac{1}{2} \E \left[ \sup_{s\in [0,t]}\norm{w_s}_{H^2}^2 \right] + c \E \left[\int_0^t \norm{w_s}^2_{H^2} ds \right].
\end{align*}
For the last line, we simply apply \eqref{bigbound1}
\begin{align*}
    \sum_i \left(\inner{\PP(\Gi^h)^2w_t}{w_t}_{H^2} +  \norm{\PP \Gi^hw_t}^2_{H^2} \right) \le c \norm{w_t}_{H^2}^2.
\end{align*}
Collecting all the estimates, taking the supremum and expectations in the energy identity, we have:
\begin{align*}
    \E \left[ \sup_{t\in[0,t']}\norm{w_t}_{H^2}^2 \right] + \E \left[ \int_0^{t'}\norm{w_t}_{H^3}^2 dt \right] &\le c \E\left[\norm{w_0}_{H^2}^2\right] + c\int_0^{t'} \E\left[ \sup_{s\in[0,t]}\norm{w_s}_{H^2}^2 \right]dt.
\end{align*}
Now an application of the standard Gr\"onwall inequality gives
\begin{equation*}
    \E \left[ \sup_{t\in[0,T]}\norm{w_t}_{H^2}^2 \right] + \E \left[ \int_0^{T}\norm{w_t}_{H^3}^2 dt \right] \le c \norm{w_0}_{H^2}^2,
\end{equation*}
which is exactly the desired result.
\end{proof}

We now address convergence in the variables $k$ and $m$ with two lemmas. These lemmas are proved together. 

\begin{lemma} \label{damping convergence}
    Let $\mathcal{S}$ be a filtered probability space and $\mathcal{W}$ a Cylindrical Brownian Motion over $\mathfrak{U}$ with respect to $\mathcal{S}$. For every $k$, let $w^k$ be the unique strong solution of (\ref{Ito form for w k}) with respect to $\mathcal{S}$, $u_0$ and $\mathcal{W}$, while $w$ is the unique strong solution of (\ref{Ito form for w}) with respect to $\mathcal{S}$, $u_0$ and $\mathcal{W}$. Then $(w^k) \longrightarrow w$ in $L^2\left(\Omega;L^\infty\left([0,T];L^2_{\sigma, h}\right)\right)$ as $k \longrightarrow \infty$.
\end{lemma}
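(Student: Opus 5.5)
We estimate the difference $v^k \coloneqq w^k - w$ directly in $L^2_{\sigma,h}$ and close the bound with the Stochastic Gr\"{o}nwall Lemma \ref{gronny}. Both $w^k$ and $w$ are strong solutions driven by the same $\mathcal{W}$ on the same $\mathcal{S}$ with the same initial datum $u_0$, so $v^k_0 = 0$. Subtracting (\ref{Ito form for w}) from (\ref{Ito form for w k}) gives
\begin{align*}
dv^k_t = &-\left(B(w^k_t,w^k_t) - B(w_t,w_t)\right)dt - A_h v^k_t\,dt - \sqrt{\tfrac{2\nu^k}{\varepsilon^k}}\,v^k_t\,dt - \left(\sqrt{\tfrac{2\nu^k}{\varepsilon^k}} - \sqrt{2\beta}\right)w_t\,dt\\
&- \mathcal{P}\mathcal{G}^h v^k_t\,d\mathcal{W}_t + \frac{1}{2}\sum_{i=1}^\infty \mathcal{P}\mathcal{G}^h_i\mathcal{G}^h_i v^k_t\,dt.
\end{align*}
We apply the $L^2$ energy identity, which is justified since $v^k \in L^2_tW^{2,2}_{\sigma,h}$ and we are in 2D.

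We treat the resulting terms as follows.
\begin{enumerate}
\item The nonlinear difference becomes $\inner{B(v^k,w)}{v^k}$ after the usual cancellation $\inner{B(w^k,v^k)}{v^k}=0$. It is bounded by $\norm{v^k}\norm{\nabla v^k}\norm{\nabla w}$ using Ladyzhenskaya's inequality, and then by Young's inequality by $\frac{1}{2}\norm{\nabla v^k}^2 + c\norm{w}_{H^1}^2\norm{v^k}^2$.
\item The damping term $-\sqrt{2\nu^k/\varepsilon^k}\norm{v^k}^2$ is nonpositive and is dropped.
\item The mismatch term is bounded by $\gamma_k\norm{w}\norm{v^k} \leq \gamma_k(\norm{w}^2+\norm{v^k}^2)$, where $\gamma_k \coloneqq |\sqrt{2\nu^k/\varepsilon^k}-\sqrt{2\beta}| \to 0$.
\item The It\^{o}--Stratonovich corrector and the quadratic variation together give $\sum_i\left(\inner{\mathcal{P}\mathcal{G}^h_i\mathcal{G}^h_iv^k}{v^k} + \norm{\mathcal{P}\mathcal{G}^h_iv^k}^2\right)$. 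We use the weak commutativity available in 2D with $\mathcal{P}_h$, together with the 2D analogue of (\ref{bigbound1NEW}) (or simply the classical estimate), to bound this by $c\norm{v^k}^2 + \frac12\norm{\nabla v^k}^2$.
\item For the martingale, the BDG inequality combined with (\ref{bigbound2NEW}) gives control by $\frac12\E\sup\norm{v^k}^2 + c\E\int\norm{v^k}^2$.
\end{enumerate}

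Collecting these, for stopping times $\theta\le\theta'$ we obtain the hypothesis of Lemma \ref{gronny} with
\[
\boldsymbol{\phi}=\norm{v^k}^2,\qquad \boldsymbol{\psi}=\norm{\nabla v^k}^2,\qquad \boldsymbol{\eta}=c(1+\norm{w}_{H^1}^2),\qquad \kappa = \gamma_k\int_0^T\E\norm{w_s}^2ds.
\]
Condition (\ref{boundingronny}) holds deterministically: (\ref{Estimate1}) applied to $w$, that is with damping $\sqrt{2\beta}$, where the same vorticity argument applies, bounds $\sup_t\norm{w_t}_{H^1}^2$ by $C\norm{u_0}_{H^1}^2$. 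This deterministic bound is exactly what lets us avoid localising to a high-probability set. The lemma then yields $\E\sup_{t\in[0,T]}\norm{v^k_t}^2 \le C\kappa \le C\gamma_k T\norm{u_0}_{H^1}^2 \to 0$, with $C$ independent of $k$.

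The main obstacle is making the stopping-time form of the estimate rigorous: the estimate must hold between arbitrary stopping times, with finiteness of the right-hand side. Finiteness follows from the deterministic $H^1$ bounds on both $w^k$ and $w$. The BDG step must also be carried out on $[\theta,\theta']$ and its $\sup$ term absorbed, which is legitimate since $\E\sup\norm{v^k}^2<\infty$ by the same deterministic bounds. A secondary point is checking that the noise bound in step 4 holds with constants independent of $k$, which is clear because the noise does not depend on $k$.
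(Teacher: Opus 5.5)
Your proposal is correct and uses the same energy estimate as the paper: the cancellation of $\langle B(w^k,v^k),v^k\rangle$, Ladyzhenskaya combined with the deterministic $H^1$ bound from (\ref{Estimate1}), dropping the damping, Young's inequality on the mismatch term, the 2D noise bounds, and BDG. The only difference is the final step: the paper applies the classical Gr\"onwall inequality to $t\mapsto\E\sup_{s\in[0,t]}\|v_s\|^2$, which works because $\sup_t\|w_t\|_{H^1}$ is bounded by a deterministic constant, so the stopping-time form of Lemma \ref{gronny} and the bookkeeping you flag as the main obstacle are not needed.
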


\begin{lemma} \label{ic convergence}
   Let $\mathcal{S}$ be a filtered probability space and $\mathcal{W}$ a Cylindrical Brownian Motion over $\mathfrak{U}$ with respect to $\mathcal{S}$. For every $k$, let $w^{k,m}$ be the unique strong solution of (\ref{Ito form for w k}) with respect to $\mathcal{S}$, $w^m_0$ and $\mathcal{W}$, while $w^k$ is the unique strong solution of (\ref{Ito form for w k}) with respect to $\mathcal{S}$, $u_0$ and $\mathcal{W}$. Then $(w^{k,m}) \longrightarrow w^k$ in $L^2\left(\Omega;L^\infty\left([0,T];L^2_{\sigma, h}\right)\right)$ as $m \longrightarrow \infty$.
\end{lemma}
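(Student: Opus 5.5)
We prove Lemmas \ref{damping convergence} and \ref{ic convergence} together, by one $L^2$ energy estimate for the difference of two solutions of the 2D equation driven by the same $\mathcal{W}$, closed with the Stochastic Gr\"{o}nwall Lemma \ref{gronny}. In general, let $v^1, v^2$ be strong solutions of (\ref{Ito form for w k}) with damping coefficients $\gamma_1, \gamma_2 \geq 0$ (including $\gamma = \sqrt{2\beta}$, which is (\ref{Ito form for w})) and initial data $v^1_0, v^2_0$. Set $d = v^1 - v^2$. Then
$$dd_t = -\left(B(v^1_t,v^1_t) - B(v^2_t,v^2_t)\right)dt - A_h d_t\,dt - \gamma_1 d_t\,dt - (\gamma_1-\gamma_2)v^2_t\,dt - \mathcal{P}\mathcal{G}^h d_t\,d\mathcal{W}_t + \tfrac12\sum_i \mathcal{P}\mathcal{G}^h_i\mathcal{G}^h_i d_t\,dt,$$
since the noise is linear. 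We apply the energy identity in $L^2_{\sigma,h}$, which is justified because strong solutions lie in $L^2_tH^2_x$.

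The terms are handled as follows. The viscous term gives $-2\|\nabla^h d\|^2$, and the damping term $-2\gamma_1\|d\|^2$ is dropped. The mismatch term is bounded by $2|\gamma_1-\gamma_2|\,\|v^2\|\|d\| \le |\gamma_1-\gamma_2|^2\|v^2\|^2 + \|d\|^2$. For the nonlinearity, write $B(v^1,v^1)-B(v^2,v^2) = B(v^1,d) + B(d,v^2)$. The first piece vanishes against $d$. The second satisfies the 2D Ladyzhenskaya bound $|\inner{B(d,v^2)}{d}| \le c\|d\|\|\nabla^h d\|\|\nabla v^2\| \le \|\nabla^h d\|^2 + c\|v^2\|_{H^1}^2\|d\|^2$. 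By (\ref{Estimate1}), the coefficient $\boldsymbol{\eta} = c\|v^2\|_{H^1}^2$ has a \emph{deterministic} time integral bound, exactly as hypothesis (\ref{boundingronny}) requires. If $v^2$ is the solution with smooth data $w^m_0$, we instead swap roles so that the $H^1$-controlled solution with data $u_0$ carries the coefficient; either choice works, since (\ref{Estimate1}) holds uniformly in the damping coefficient for data $u_0$.

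For the noise, the It\^{o}--Stratonovich corrector together with the quadratic variation gives $\sum_i\left(\inner{\mathcal{P}\mathcal{G}^h_i\mathcal{G}^h_i d}{d} + \|\mathcal{P}\mathcal{G}^h_i d\|^2\right)$. In 2D we can use the weak commutativity $\mathcal{P}\mathcal{G}^h_i\mathcal{P} = \mathcal{P}\mathcal{G}^h_i$, so (\ref{bigbound1NEW}) with $\nu^\alpha$ absent applies: the sum is bounded by $c\|d\|^2 + \tfrac12\|\nabla^h d\|^2$ after choosing $\delta$ small and summing $\|\xi_i\|^2_{W^{1,\infty}}$. For the martingale, BDG and (\ref{bigbound2NEW}) give, between stopping times $\theta \le \theta'$, a bound $\tfrac12\mathbbm{E}\sup_{[\theta,\theta']}\|d\|^2 + c\,\mathbbm{E}\int_\theta^{\theta'}\|d\|^2$.

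Collecting these, the hypothesis of Lemma \ref{gronny} holds with $\boldsymbol{\phi} = \|d\|^2$, $\boldsymbol{\psi} = \|\nabla^h d\|^2$, $\boldsymbol{\eta} = c(1+\|v^2\|_{H^1}^2)$ and $\kappa = |\gamma_1-\gamma_2|^2\, T\, C\|u_0\|_{H^1}^2$. For the $\kappa$ term we used the deterministic bound (\ref{Estimate1}) on $\sup_t\|v^2_t\|^2$; in Lemma \ref{ic convergence} the dampings agree, so $\kappa = 0$. The lemma then yields
$$\mathbbm{E}\sup_{t\in[0,T]}\|v^1_t - v^2_t\|^2 \le C\left(\|v^1_0-v^2_0\|^2 + |\gamma_1-\gamma_2|^2\right),$$
with $C$ independent of $k$ and $m$. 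For Lemma \ref{damping convergence}, take $\gamma_1 = \sqrt{2\nu^k/\varepsilon^k}$, $\gamma_2 = \sqrt{2\beta}$ and equal data; then $\gamma_1 \to \gamma_2$ gives the convergence. For Lemma \ref{ic convergence}, take equal dampings and data $w^m_0$, $u_0$; then $w^m_0 \to u_0$ in $L^2$ gives the convergence, uniformly in $k$.

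We expect the main obstacle to be the careful justification of the nonlinear term's coefficient. We need it to be deterministically integrable, so that the Stochastic Gr\"{o}nwall argument applies without restricting to high-probability sets. This is exactly why the $H^1$-bounded solution must be the one appearing in $\boldsymbol{\eta}$, and why the uniformity of (\ref{Estimate1}) in the damping parameter is essential. A secondary technical point is localising with stopping times to guarantee the finiteness required in Lemma \ref{gronny}; this follows from the pathwise regularity of strong solutions.
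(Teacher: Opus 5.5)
Your proposal is correct and follows essentially the paper's proof: the same difference energy estimate with the splitting $B(v^1,d)+B(d,v^2)$, the deterministic $H^1$ bound of Lemma \ref{uniform bounds on w} controlling the coefficient, BDG for the martingale, and a Gr\"onwall closure (the paper uses the ordinary Gr\"onwall inequality since $\sup_t\|v^2_t\|_{H^1}$ is deterministically bounded, and a closed $L^2$ noise bound instead of your $\delta$-absorbed version, but these differences are cosmetic). The only slip is bookkeeping: after the factor $2$ from the energy identity, your Young's inequality for the nonlinear term uses up the whole dissipation $2\|\nabla^h d\|^2$, so the extra $\frac12\|\nabla^h d\|^2$ from the noise is not absorbed; taking a smaller Young parameter, e.g. $\frac14\|\nabla^h d\|^2$, fixes this.
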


\begin{proof}
    The proof follows by establishing continuity in the damping parameter and initial data through a Gr\"onwall argument. To ease the notations, consider $w$ to be the solution to
    \begin{equation*}
        dw_t= -A_hw_t dt - B(w_t,w_t)dt - \beta w_tdt - \PP\GG^h w_t d\mathcal{W}_t + \frac{1}{2}\sum_i \PP(\Gi ^h)^2w_t dt,
    \end{equation*}
    with initial data $w_0\in W^{1,2}_{\sigma, h}$, and let $\tilde w$ solve the same equation with parameter $\tilde \beta$ and initial data $\tilde w_0$. Let further $v:= w-\tilde w$. Then the difference process satisfies the equation:
    \begin{equation*}
        dv_t = -A_h v_t dt - B(w_t,v_t)dt - B(v_t,\tilde w_t)dt -\beta v dt-(\beta - \tilde \beta)\tilde w dt -\PP\GG^h v_t d\mathcal{W}_t + \frac{1}{2}\sum_i \PP(\Gi ^h)^2v_t dt,
    \end{equation*}
    with $v_0=w_0-\tilde w_0$. The energy identity now yields:
    \begin{align*}
        d\norm{v_t}^2 = &-2\norm{\nabla^h v_t}^2 dt -2 \inner{B(v_t,\tilde w_t)}{v_t}dt-2\beta \norm{v_t}^2dt -2(\beta - \tilde \beta)\inner{\tilde w_t}{v_t}dt\\
        & -2 \inner{\PP \GG^hv_t}{v_t}d\mathcal{W}_t\\
        &+ \sum_i \left( \inner{\PP(\Gi ^h)^2v_t}{v_t} + \norm{\PP\Gi^hv_t}^2\right)dt.
    \end{align*}
    For the terms in the first line, we drop the $v$ damping term, and estimate the rest as follows:
    \begin{equation*}
        \left|\inner{B(v_t,\tilde w_t)}{v_t} \right| \le c\norm{\nabla^h \tilde w_t}\norm{v_t}_{L^4}^2 \le c \norm{v_t}\norm{v_t}_{H^1}\le \frac{1}{2}\norm{v_t}_{H^1}^2 + c\norm{v_t}^2,
    \end{equation*}
    where we have exploited the deterministic, uniform in time bound on $\|\tilde w\|_{H^1}$ from \ref{uniform bounds on w}, and a $2$-d Ladyzhenskaya inequality. For the remaining term we have
    \begin{equation*}
        \left|(\beta - \tilde \beta)\inner{\tilde w_t}{v_t} \right|\le |(\beta - \tilde \beta)|^2 + c\|v_t\|^2,
    \end{equation*}
    by Cauchy-Schwarz followed by a Young's inequality.\\

    The martingale term, and the remaining noise terms are dealt with exactly the same as the ones in the proof of lemma \ref{uniform bounds on w}, using the corresponding bounds:
    \begin{align*}
    \inner{\PP\GG_i^2f}{f} +  \norm{\PP\GG_if}_{}^2  &\leq c\norm{\xi_i}_{W^{2,\infty}}^2\norm{f}_{}^2,\\
    \inner{\PP\GG_i f}{f}_{}^2 &\leq c\norm{\xi_i}^2_{W^{1,\infty}}\norm{f}^4_{},
    \end{align*}
    which were also proved in [\cite{goodair2025closed}]. For the martingale term, we have:
    \begin{align*}
        \E \left[ \sup_{s\in[0,t]}\left| \int_0^s\inner{\PP \GG^hv_r}{v_r}d\mathcal{W}_r \right|\right] &\le c \E \left[ \left( \int_0^t \left|\inner{\PP \GG^hv_s}{v_s}\right|^2ds \right)^{\frac{1}{2}} \right] \\
        &\le c \E \left[ \left( \int_0^t \norm{v_s}^4 ds \right)^{\frac{1}{2}} \right] \\
        &\le c \E \left[\sup_{s\in [0,t]}\norm{v_s} \left( \int_0^t \norm{v_s}^2 ds \right)^{\frac{1}{2}} \right] \\
        &\le \frac{1}{2} \E \left[ \sup_{s\in [0,t]}\norm{v_s}^2 \right] + c \E \left[\int_0^t \norm{v_s}^2 ds \right],
    \end{align*}
    and for the remaining ones
    \begin{align*}
        \sum_i \left(\inner{\PP(\Gi^h)^2v_t}{v_t} +  \norm{\PP \Gi^hv_t}^2 \right) \le c \norm{v_t}^2.
    \end{align*}
    Collecting all the estimates, we have:
    \begin{align*}
        \E \left[ \sup_{t\in[0,t']}\norm{v_t}^2 \right]  &\le c \E\left[\norm{v_0}^2\right] + c|\beta - \tilde \beta|^2 + c\int_0^{t'} \E\left[ \sup_{s\in[0,t]}\norm{v_s}^2 \right]dt.
    \end{align*}
    Applying Gr\"onwall's lemma, we obtain
    \begin{equation*}
        \E \left[ \sup_{t\in[0,T]}\norm{v_t}^2 \right] \le c\left(\E\left[\norm{v_0}^2\right] + |\beta - \tilde \beta|^2\right),
    \end{equation*}
    which proves both lemmas.
\end{proof}
\subsection{Selection of Martingale Weak Solutions} \label{subs selection martingale weak}
\begin{proposition} \label{prop choosing spaces}
There exists a filtered probability space $\mathcal{S}$ and Cylindrical Brownian Motions $\mathcal{W}$, $\left(\mathcal{W}^n\right)$ over $\mathfrak{U}$ with respect to $\mathcal{S}$, such that:
\begin{enumerate}
    \item For every $k$, there exists a process $u^k$ such that $(u^k,\mathcal{W})$ is a martingale weak solution of (\ref{some main equation 2 Ito k}) with respect to $\mathcal{S}$ and $u_0$;
    \item \label{item2} $u^k$ is obtained as the $n \rightarrow \infty$ limit of Galerkin Approximations $u^{k,n}$ satisfying
    \begin{align} \nonumber
    u^{k,n}_t &= \mathcal{P}_nu_0 - \int_0^t\mathcal{P}_nB(u^{k,n}_s, u^{k,n}_s) ds - \int_0^t \mathcal{P}_n A_h u^{k,n}_s\, ds - \nu^k\int_0^t \mathcal{P}_n A_{z} u^{k,n}_s\, ds \\ & - \int_0^t\mathcal{P}_n\mathcal{P} \frac{e_3 \wedge u^{k,n}_s}{\varepsilon^k}ds-  \int_0^t \mathcal{P}_n\mathcal{P} \tilde{\mathcal{G}} u^{k,n}_s \, d\mathcal{W}^n_s + \frac{1}{2}\int_0^t\sum_{i=1}^\infty \mathcal{P}_n\mathcal{P}\tilde{\mathcal{G}}_i\mathcal{P}\tilde{\mathcal{G}}_iu^{k,n}_s ds. \nonumber
\end{align}
Precisely, $(u^{k,n}) \rightarrow u^k$ in the weak* topology of $L^2\left(\Omega; L^\infty\left([0,T];L^2_{\sigma}\right) \right)$ and the weak topology of $L^2\left(\Omega; L^2\left([0,T];W^{1,2}_{\sigma}\right) \right)$;
\item \label{item2.5}
The solutions $(u^{k,n})$ above, enjoy an additional uniform (in $n$) bound on \\ $L^4\left(\Omega; L^\infty\left([0,T];L^2_{\sigma}\right) \cap L^2\left([0,T];W^{1,2}_{\sigma} \right) \right)$;
\item \label{item3} For every $k$ and $m$, the unique strong solution $w^{k,m}$ of (\ref{Ito form for w k}) with respect to $\mathcal{S}$, $w^m_0$ and $\mathcal{W}$ is the $n \rightarrow \infty$ limit of the unique strong solutions $(w^{k,m,n})$ of (\ref{Ito form for w k}) with respect to $\mathcal{S}$, $w^m_0$ and $(\mathcal{W}^n)$: convergence holds in the topology of $L^2\left(\Omega; C\left([0,T];W^{1,2}_{\sigma,h}\right) \cap L^2\left([0,T];W^{2,2}_{\sigma,h}\right) \right)$. 
\end{enumerate}

\end{proposition}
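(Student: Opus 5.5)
We follow the approach of [\cite{breit-hofmanova2016}]: apply Skorokhod's theorem once, to the whole family of Galerkin approximations indexed by $(k,n)$, together with the limit-equation solutions indexed by $(k,m,n)$.

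\textbf{Step 1: Galerkin systems and uniform bounds.} We fix an original filtered probability space carrying a single Cylindrical Brownian Motion $\bar{\mathcal{W}}$. For each $k,n$ the Galerkin system in item \ref{item2} is a finite dimensional SDE with locally Lipschitz coefficients, so it has a unique solution $\bar u^{k,n}$ up to explosion. Globality and the uniform bounds come from the energy identity. The Coriolis term cancels, since $\inner{\mathcal{P}(e_3\wedge u)}{u}=0$ for $u\in L^2_\sigma$. The nonlinear term vanishes. The corrector plus quadratic variation is controlled by (\ref{bigbound1NEW}) with $\delta$ small: the $\delta\norm{\nabla^h u}^2$ part is absorbed by $A_h$, and $\delta(\nu^k)^{2\alpha}\norm{\partial_3u}^2\le \delta\nu^k\norm{\partial_3 u}^2$ is absorbed by $\nu^kA_z$, because $2\alpha>1$ and $\nu\le1$. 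The martingale term is handled with BDG and (\ref{bigbound2NEW}). Applying Itô to $\norm{u}^4$ in the same way gives the $L^4_\Omega$ bound of item \ref{item2.5}, uniform in $n$. The bound on $\int\norm{\nabla^h u}^2$ is uniform in $k$, but the bound on $\int\norm{\partial_3u}^2$ is only uniform in $n$ for fixed $k$, which is all we need. A fractional Sobolev-in-time estimate in a negative space, for example $W^{\gamma,2}([0,T];(W^{1,2}_\sigma)')$ via Kolmogorov/BDG, then gives tightness of the laws of $u^{k,n}$ in $L^2([0,T];L^2_\sigma)\cap C([0,T];(W^{1,2}_\sigma)')$ for each fixed $k$. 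On the same space we also solve (\ref{Ito form for w k}) strongly with data $w^m_0$, by Lemma \ref{strong existence for 2D NS}, giving $\bar w^{k,m}$. These are deterministic-type functionals of the noise, so their laws are tight in $C([0,T];W^{1,2}_{\sigma,h})\cap L^2([0,T];W^{2,2}_{\sigma,h})$ on a Polish space.

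\textbf{Step 2: Skorokhod.} The countable family $(\bar u^{k,n})_{k,n}$, $(\bar w^{k,m})_{k,m}$, $\bar{\mathcal{W}}$ forms one random element of a countable product of Polish spaces. Its laws, indexed by $n$, are tight. Skorokhod--Jakubowski gives a new space $\mathcal{S}$ and variables $(u^{k,n})_k,(w^{k,m,n})_{k,m},\mathcal{W}^n$ with the same joint laws, converging a.s. along a subsequence in $n$ to limits $(u^k)_k,(w^{k,m}),\mathcal{W}$. Here $\mathcal{W}$ does not depend on $k$, which is exactly what item 1 needs. Equality of joint laws, together with the standard martingale identification of Itô integrals, shows three things: each $\mathcal{W}^n$ is a Cylindrical Brownian Motion; $u^{k,n}$ solves the Galerkin system driven by $\mathcal{W}^n$; and $w^{k,m,n}$ is the strong solution with respect to $\mathcal{W}^n$, by pathwise uniqueness and the Yamada--Watanabe argument.

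\textbf{Step 3: Passage to the limit.} The a.s. convergence, Vitali with the $L^4$ bound, and weak/weak* compactness from the bounds in Step 1 yield the convergence in item \ref{item2}. We then pass to the limit $n\to\infty$ in each term of Definition \ref{definitionofspacetimeweakmartingale}, using test functions in the span of the basis first and density afterwards. The stochastic integral converges by the standard lemma on convergence of Itô integrals under a.s. convergence of both integrand and Brownian motion. For item \ref{item3}, the limit $w^{k,m}$ solves (\ref{Ito form for w k}) with $\mathcal{W}$, by the same identification, and so coincides with the unique strong solution. To upgrade a.s. convergence to $L^2_\Omega$ convergence we use uniform integrability: the deterministic bound (\ref{Estimate1}) covers the $C_tH^1$ part, and (\ref{Estimate2}) with $m$ fixed covers the $L^2_tH^2$ part.

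\textbf{Main obstacle.} The delicate point is Step 2. We must verify that the joint law of the infinitely many indexed processes transfers all the equations simultaneously, so that a single $\mathcal{W}$ works for every $k$. We must also ensure that the filtration generated by all the limits keeps $\mathcal{W}$ a Brownian motion, which requires checking the independence of increments with respect to this larger filtration.
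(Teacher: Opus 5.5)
Your proposal is correct and follows essentially the same route as the paper: uniform energy bounds using \eqref{bigbound1NEW}, with the error terms absorbed into the horizontal viscosity and, since $2\alpha>1$, the vertical viscosity; then a single Breit--Hofmanov\'a Skorokhod step applied to the family $(u^{k,n},w^{k,m},\mathcal{W})_{k,m}$ indexed by $n$; and finally a deterministic $H^1$-level bound that upgrades the a.s.\ convergence of $w^{k,m,n}$ to $L^2_\Omega$. Two small remarks: It\^{o}'s formula for $\norm{u}^4$ alone gives $\E\sup_t\norm{u_t}^4$ but not $\E\bigl(\int_0^T\norm{u_s}_{H^1}^2ds\bigr)^2$, which you get (as the paper does) by squaring the integrated energy inequality and applying BDG; and the deterministic bound \eqref{Estimate1}, applied with datum $w^m_0$, already controls the $L^2_tH^2$ part, so uniform integrability is immediate without \eqref{Estimate2}.
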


\begin{proof}
The existence of a martingale weak solution to (\ref{some main equation 2 Ito k}), as a limit of Galerkin Approximations, was given in the isotropic case and without rotation on a bounded domain in [\cite{goodair2024weak}] Theorem 5.1. Anisotropy and alteration of the domain does not disturb the proof in a significant way, as one only suffers the worse bound (\ref{bigbound1NEW}) which is entirely sufficient for the proof in [\cite{goodair2024weak}] by combining with the viscous term. Likewise, the rotational term is a bounded linear function of the solution and does not affect the proof. \\

Let us now address the uniform (in $n$) bound in $L^4\left(\Omega; L^\infty\left([0,T];L^2_{\sigma}\right) \cap L^2\left([0,T];W^{1,2}_{\sigma} \right) \right)$. This is fairly standard, but we sketch the argument for completeness. Applying the energy identity, we deduce
\begin{align*}
    \norm{u^{k,n}_t}^2 + \int_0^t\norm{\nabla^hu^{k,n}_s}^2 + \nu_k\norm{\partial_3^2u^{k,n}_s}^2 ds \le &c\norm{u^{k,n}_0}^2 + c\left| \int_0^t \inner{\mathcal{P}_n\mathcal{P} \tilde{\mathcal{G}} u^{k,n}_s}{u^{k,n}_s} d\mathcal{W}^n_s \right| +\\ &+ c \int_0^t \sum_i \inner{\mathcal{P}_n\mathcal{P}\tilde{\mathcal{G}}_i\mathcal{P}\tilde{\mathcal{G}}_iu^{k,n}_s}{u^{k,n}_s} + \norm{\mathcal{P}_n\mathcal{P} \tilde{\mathcal{G}}_i u^{k,n}_s}^2 ds.
\end{align*}
Using the bound, \eqref{bigbound1NEW} and taking $\delta$ small enough, we see
\begin{align*}
    \norm{u^{k,n}_t}^2 + \int_0^t\norm{\nabla^hu^{k,n}_s}^2 + \nu_k \norm{\partial_3^2u^{k,n}_s}^2 ds \le &c\norm{u^{k,n}_0}^2 + c\left| \int_0^t \inner{\mathcal{P}_n\mathcal{P} \tilde{\mathcal{G}} u^{k,n}_s}{u^{k,n}_s} d\mathcal{W}^n_s \right| +\\ &+ c_\delta \int_0^t \norm{u^{k,n}_s}^2 ds.
\end{align*}
We now take supremums in time, square the inequality and take expectations to obtain (with some elementary bounds and an application of the BDG inequality at the last step)
\begin{align*}
    \E \left[ \left(\sup_{t'\le t}\norm{u^{k,n}_{t'}}^2 +  \int_0^t\norm{\nabla^hu^{k,n}_s}^2 + \nu_k \norm{\partial_3^2u^{k,n}_s}^2 ds\right)^2 \right] \le &c\E \left[ \norm{u^{k,n}_0}^4 \right] \\ &+c \E \left[ \sup_{t' \le t}\left| \int_0^{t'} \inner{\mathcal{P}_n\mathcal{P} \tilde{\mathcal{G}} u^{k,n}_s}{u^{k,n}_s} d\mathcal{W}^n_s \right|^2 \right] \\ &+ c_\delta \E \left[ \int_0^t \norm{u^{k,n}_s}^4 ds \right] \\
    &\le c\E \left[ \norm{u^{k,n}_0}^4 \right] + c  \int_0^t \E \left[ \sup_{t'\le s}\norm{u^{k,n}_{t'}}^4 \right] ds.
\end{align*}
Gr\"onwall's lemma now implies
\begin{equation*}
    \E \left[ \left( \sup_{t\le T}\norm{u^{k,n}_t}^2  + \int_0^T\norm{\nabla^hu^{k,n}_s}^2 + \nu_k \norm{\partial_3^2u^{k,n}_s}^2 ds\right)^2 \right] \le c\E \left[ \norm{u^{k,n}_0}^4 \right],
\end{equation*}
where the constant $c$ is uniform both in $k,n$, and due to the uniform boundedness of the initial data for the Galerkin equations, this establishes the desired estimate.\\

The remaining point of this proposition is that the martingale weak solution $(u^k, \mathcal{W})$ is constructed so that $\mathcal{W}$ is independent of $k$, as well as the convergence $(w^{k,m,n}) \longrightarrow w^{k,m}$.
The strategy allowing to achieve this, is presented in [\cite{breit-hofmanova2016}].
The first step is to fix a probability space with a Cylindrical Brownian Motion $\mathcal{W}$. Then, we construct the probabilistically strong solution to the equations for $u^{k,n},w^{k,m}$ driven by this $\mathcal{W}$. Tightness is a consequence of the standard uniform estimates used in the proof referenced above. Now the key idea is to look at the following collection of variables indexed by $n$:
\begin{equation*}
    \left\{ \left( u^{k,n},w^{k,m}, \mathcal{W} \right)_{k,m\in \mathbb{N}}\,|\, n\in \mathbb{N} \right\}.
\end{equation*}
Passing to a subsequence, which we don't relabel, we get that this family converges in law to the family $(u^k,w^{k,m}, \mathcal{W})_{k,m \in \mathbb{N}}$. Applying the Skorokhod embedding theorem, we now get the family
\begin{equation*}
    \left\{ \left( \tilde{u}^{k,n},\tilde{w}^{k,m,n}, \tilde{\mathcal{W}}^n \right)_{k,m\in \mathbb{N}}\,|\, n\in \mathbb{N} \right\}
\end{equation*}
converging almost surely to $(\tilde u^k, \tilde w^{k,m},\tilde{ \mathcal{W}})_{k,m \in \mathbb{N}}$. The topologies in which this happens are exactly the ones implied by the available uniform bounds leading to tightness for $u^{k,n}$. For $\tilde w^{k,m,n}$, we get almost sure convergence in $C\left([0,T];W^{2,2}_{\sigma,h}\right) \cap L^2\left([0,T];W^{3,2}_{\sigma,h}\right)$ due to the regularity of the initial condition, as well as  $L^2\left(\Omega; C\left([0,T];W^{1,2}_{\sigma,h}\right) \cap L^2\left([0,T];W^{2,2}_{\sigma,h}\right) \right)$ due to the deterministic bound from Lemma \ref{uniform bounds on w} allowing to extend to the probabilistic $L^2$ mode of convergence. The identification of the equations solved by the variables is completely standard, so we do not reproduce it. From now on, we work on this common probability space, and drop the tildes. 
\end{proof}

\subsection{Proof of the Main Result} \label{subs proof of main}

\begin{proof}[Proof of Theorem \ref{main result}]
    We choose the filtered probability space $\mathcal{S}$ and Cylindrical Brownian Motions $\mathcal{W}$, $(\mathcal{W}^n)$ as specified in Proposition \ref{prop choosing spaces}. Furthermore, we introduce a sequence of more regular initial conditions $(w^m_0)$, $w^m_0 \in W^{2,2}_{\sigma,h}$ which converges to $u_0$ in $W^{1,2}_{\sigma,h}$. Let us now fix some notation for the solutions and their approximations, as required in the proof:
    \begin{itemize}
        \item $(u^k)$ and $(u^{k,n})$ are as in Proposition \ref{prop choosing spaces};
        \item $w$ is the unique strong solution of (\ref{Ito form for w}) with respect to $\mathcal{S}$, $u_0$ and $\mathcal{W}$;
        \item $(w^k)$ are the unique strong solutions of (\ref{Ito form for w k}) with respect to $\mathcal{S}$, $u_0$ and $\mathcal{W}$;
        \item $(w^{k,m})$ are the unique strong solutions of (\ref{Ito form for w k}) with respect to $\mathcal{S}$, $(w^m_0)$ and $\mathcal{W}$;
        \item $(w^{k,m,n})$ are the unique strong solutions of (\ref{Ito form for w k}) with respect to $\mathcal{S}$, $(w^m_0)$ and $(\mathcal{W}^n)$.
    \end{itemize}
We begin the proof by establishing sufficiency of working at the level of the  sequences. We have that
    \begin{align*}
        \mathbbm{E}\left[\norm{u^k - w}_{L^\infty([0,T];L^2_{\sigma})}^2 \right] &\leq 2 \mathbbm{E}\left[\norm{u^k - w^k }_{L^\infty([0,T];L^2_{\sigma})}^2\right] + 2\mathbbm{E}\left[\norm{w^k - w }_{L^\infty([0,T];L^2_{\sigma})}^2\right]
    \end{align*}
so by Lemma \ref{damping convergence}, 
\begin{align*}
       \lim_{k 
       \rightarrow \infty} \mathbbm{E}\left[\norm{u^k - w}_{L^\infty([0,T];L^2_{\sigma})}^2 \right] &\leq 2\lim_{k \rightarrow \infty} \mathbbm{E}\left[\norm{u^k - w^k }_{L^\infty([0,T];L^2_{\sigma})}^2\right].
    \end{align*}
Moreover,
$$\mathbbm{E}\left[\norm{u^k - w^k }_{L^\infty([0,T];L^2_{\sigma})}^2\right] \leq 2\mathbbm{E}\left[\norm{u^k - w^{k,m} }_{L^\infty([0,T];L^2_{\sigma})}^2\right] + 2\mathbbm{E}\left[\norm{w^{k,m} - w^k }_{L^\infty([0,T];L^2_{\sigma})}^2\right]$$
and due to the weak* convergence specified in item \ref{item2} and the strong convergence in \ref{item3} of Proposition \ref{prop choosing spaces},
$$\mathbbm{E}\left[\norm{u^k - w^{k,m} }_{L^\infty([0,T];L^2_{\sigma})}^2\right] \leq \liminf_{n \rightarrow \infty}\mathbbm{E}\left[\norm{u^{k,n} - w^{k,m,n} }_{L^\infty([0,T];L^2_{\sigma})}^2\right].$$
Therefore, we have that
\begin{align*}
    &\mathbbm{E}\left[\norm{u^k - w^k }_{L^\infty([0,T];L^2_{\sigma})}^2\right]\\
    &\leq 2\lim_{m \rightarrow \infty} \liminf_{n \rightarrow \infty}\mathbbm{E}\left[\norm{u^{k,n} - w^{k,m,n} }_{L^\infty([0,T];L^2_{\sigma})}^2\right] + 2\lim_{m \rightarrow \infty}\mathbbm{E}\left[\norm{w^{k,m} - w^k }_{L^\infty([0,T];L^2_{\sigma})}^2\right]\\
    &= 2\lim_{m \rightarrow \infty} \liminf_{n \rightarrow \infty}\mathbbm{E}\left[\norm{u^{k,n} - w^{k,m,n} }_{L^\infty([0,T];L^2_{\sigma})}^2\right]
\end{align*}
having applied Lemma \ref{ic convergence}. In total,
\begin{equation} \label{in total}
   \lim_{k 
       \rightarrow \infty} \mathbbm{E}\left[\norm{u^k - w}_{L^\infty([0,T];L^2_{\sigma})}^2 \right] \leq 4\lim_{k 
       \rightarrow \infty}\lim_{m \rightarrow \infty}\liminf_{n \rightarrow \infty}\mathbbm{E}\left[\norm{u^{k,n} - w^{k,m,n} }_{L^\infty([0,T];L^2_{\sigma})}^2\right]
\end{equation}
so the computation boils down to treating $\norm{u^{k,n} - w^{k,m,n} }^2$. At this point we will drop the superscripts, thus renaming the variables:
\begin{equation*}
    u^{k,n} \mapsto u, \quad w^{k,m,n} \mapsto w, \quad \mathcal{W}^n \mapsto \mathcal{W},
\end{equation*}
and the dependence on parameters will be tracked carefully in the constants, to ensure the correct dependencies. We would like to carry out an energy estimate for the difference, but in order to do this, we need to look at the difference corrected by the boundary corrector. We thus define
$$v:= u-w-\mathcal{B}[w].$$
To be able to write down the energy identity for $v$, we need to first understand the semimartingale decomposition of $\mathcal{B}[w]$. Due to the specific form \eqref{eq:corrector-def}, the deterministic linear map $\mathcal{B}[\cdot]$ commutes with the horizontal derivatives. In combination with $w$ only depending on the horizontal variables, we have
\begin{equation}\label{eq:dB}
\begin{aligned}
d\BB={}&-\BB[B(w,w)]\,dt-A_h\BB\,dt
-\sqrt{\frac{2\nu}{\varepsilon}}\BB\,dt\\
&+\frac12\sum_i\BB[\PP(\GG_i^h)^2w]\,dt
-\sum_i\BB[\PP\GG_i^hw]\,dW^{i}.
\end{aligned}
\end{equation}
With this decomposition in place, we can apply the energy identity to $v$, understood as $u-w- \mathcal{B}[w]$:
\begin{align}
d\|v\|^2=&
-2\langle\PP_nB(u,u)-B(w,w)-\BB[B(w,w)],v\rangle\,dt\nonumber\\
&-2\langle\PP_nA_hu-A_hw-A_h\BB,v\rangle\,dt\nonumber\\
&-2\nu\langle\PP_nA_zu,v\rangle\,dt
-\frac2\varepsilon\langle\PP_n\PP(e_3\wedge u),v\rangle\,dt\nonumber\\
&+2\sqrt{\frac{2\nu}{\varepsilon}}\langle w+\BB,v\rangle\,dt\nonumber\\
&+\sum_i\langle
\PP_n\PP\Gt_i\PP\Gt_iu-\PP(\GG_i^h)^2w-\BB[\PP(\GG_i^h)^2w],v\rangle\,dt
\nonumber\\
&+\sum_i\|\PP_n\PP\Gt_iu-\PP\GG_i^hw-\BB[\PP\GG_i^hw]\|^2\,dt
\nonumber\\
&-2\sum_i\langle
\PP_n\PP\Gt_iu-\PP\GG_i^hw-\BB[\PP\GG_i^hw],v\rangle\,dW^{i}.
\label{eq:exact}
\end{align}
We now wish to bound the right hand side by terms that can be treated by an application of a (stochastic) Gr\"onwall lemma \ref{gronny}. This is somewhat delicate, as one has to exploit various exact cancellations permitted by the specific boundary layer corrector, as well as top order derivative cancellation between the It\^o-Stratonovich correction and the quadratic variation term.\\

\noindent In order to see the above features, we need to isolate the error coming from the Galerkin projections. To this end we note
$$\PP_nv=v+(I-\PP_n)(w+\BB),$$ and consequently
$$\langle\PP_n f,v\rangle
=\langle f,v\rangle+
\langle f,(I-\PP_n)(w+\BB)\rangle.$$
For the nonlinear term, expand \(u=v+w+\BB\):
$$ B(u,u)-B(w,w)
=B(u,v)+B(v,w)+B(\BB,w)+B(u,\BB),$$
which (exploiting the skew-symmetry of $B$) gives
\begin{equation} \label{eq:galerkin-res-1}
    \langle\PP_nB(u,u)-B(w,w),v\rangle
=\langle B(v,w)+B(\BB,w)+B(u,\BB),v\rangle\\
+\langle B(u,u),(I-\PP_n)(w+\BB)\rangle.
\end{equation}
In both the horizontal and vertical viscous terms we can drop the projection, as we can choose the basis for the Stokes operator, that also diagonalises the horizontal Laplacian. In this case:
$$\langle\PP_nA_hu-A_hw-A_h\BB,v\rangle
=\|\nabla^hv\|^2,$$as well as
$$\langle\PP_nA_zu,v\rangle
=\|\partial_3v\|^2+\langle A_z\BB,v\rangle.$$
As for the Coriolis term, we get
\begin{equation}\label{eq:rotation-exact}
\langle\PP_n\PP(e_3\wedge u),v\rangle
=\langle e_3\wedge\BB,v\rangle
+\langle e_3\wedge u,(I-\PP_n)(w+\BB)\rangle,
\end{equation}
as $\langle e_3\wedge v,v\rangle=0$ due to skew symmetry, and $\langle e_3\wedge w,v\rangle=0$ due to a Taylor-Proudman theorem asserting that $ e_3 \wedge f$ is a gradient if and only if the (mean free) $f$ depends only on the horizontal variables.\\
\noindent
In the It\^o-Stratonovich correction term, we note that $ \PP\Gt_i\PP\Gt_iu = \PP\Gt_i\PP\Gt_i v + \PP (\GG^h_i)^2 w + \PP\Gt_i\PP\Gt_i \mathcal{B} $, so that we have
\begin{equation}\label{eq:ito-exact}
\begin{aligned}
&\langle
\PP_n\PP\Gt_i\PP\Gt_iu-\PP(\GG_i^h)^2w-\BB[\PP(\GG_i^h)^2w],v\rangle=\\
&\quad=
\langle\PP\Gt_iv,\Gt_i^*v\rangle+\langle\PP\Gt_i\BB,\Gt_i^*v\rangle
-\langle\BB[\PP(\GG_i^h)^2w],v\rangle +
\langle\PP\Gt_iu,\Gt_i^*(I-\PP_n)(w+\BB)\rangle.
\end{aligned}
\end{equation}
For the quadratic variation term, we note that
\begin{equation*}\label{eq:diffusion-split}
\begin{aligned}
&\PP_n\PP\Gt_iu-\PP\GG_i^hw-\BB[\PP\GG_i^hw]\\
&\quad=
\PP_n\PP\bigl(\Gt_iv+\Gt_i\BB-\BB[\PP\GG_i^hw]\bigr)-
(I-\PP_n)\bigl(\PP\GG_i^hw+\BB[\PP\GG_i^hw]\bigr),
\end{aligned}
\end{equation*}
where the two terms are orthogonal. This along with bounding the norm of $\PP_n$ by one, gives
\begin{equation}\label{eq:qv}
\begin{aligned}
&\|\PP_n\PP\Gt_iu-\PP\GG_i^hw-\BB[\PP\GG_i^hw]\|^2\\
&\quad\le
\|\PP\Gt_iv+\PP\Gt_i\BB-\BB[\PP\GG_i^hw]\|^2+
\|(I-\PP_n)(\PP\GG_i^hw+\BB[\PP\GG_i^hw])\|^2.
\end{aligned}
\end{equation}
Finally, for the integrand of the martingale term we use that 
\begin{equation}\label{eq:mart-split}
\begin{aligned}
&\langle\PP_n\PP\Gt_iu-\PP\GG_i^hw-\BB[\PP\GG_i^hw],v\rangle\\
&\quad=
\langle\Gt_iv+\Gt_i\BB-\BB[\PP\GG_i^hw],v\rangle+\langle\Gt_iu,(I-\PP_n)(w+\BB)\rangle.
\end{aligned}
\end{equation}
Let us now collect all the Galerkin projection remainder terms, that is the terms from equations \eqref{eq:galerkin-res-1}, \eqref{eq:rotation-exact}, \eqref{eq:ito-exact}, \eqref{eq:qv} and \eqref{eq:mart-split} into $R_i$ for $i \in \{1,2,3,4,5\}$, where $R_5$ is an It\^o integrand.
We can now update the energy balance to
\begin{align}\label{eq:EE_to_est}
    d\|v\|^2\le& -2\langle B(v,w)+B(\BB,w)+B(u,\BB) -\BB[B(w,w)],v\rangle dt\nonumber\\
    &-2 \|\nabla^hv\|^2 dt\nonumber\\
    &-2\nu \|\partial_3v\|^2dt -2\nu\langle A_z\BB,v\rangle dt -\frac{2}{\varepsilon} \langle e_3\wedge\BB,v\rangle dt\nonumber\\
    &+2\sqrt{\frac{2\nu}{\varepsilon}}\langle w+\BB,v\rangle\,dt\nonumber\\ 
    &+\sum_i \left( \langle\PP\Gt_iv,\Gt_i^*v\rangle dt+\langle\PP\Gt_i\BB,\Gt_i^*v\rangle dt-\langle\BB[\PP(\GG_i^h)^2w],v\rangle dt \right) \nonumber \\
    &+\sum_i\|\PP\Gt_iv+\PP\Gt_i\BB-\BB[\PP\GG_i^hw]\|^2 dt \nonumber \\
    &-2 \sum_i\langle\Gt_iv+\Gt_i\BB-\BB[\PP\GG_i^hw],v\rangle dW^i \nonumber\\
    &+ \sum_{i=1}^4 R_i \, dt + R_5 d\mathcal{W}_t.
\end{align}
We are now in a good setup to start estimating terms on the right hand side "group by group". We start with the nonlinear terms  i.e. the first line of \eqref{eq:EE_to_est}, and treat each term separately.\\

\noindent For the first term, we use \eqref{aux-est2} componentwise, with $f=v$ and $g=\partial w$, followed by Young's inequality ($\delta$ to be chosen later, and $\|w\|_{H^1}$ absorbed into the absolute constant due to the deterministic bound \eqref{Estimate1}):
\begin{equation}\label{eq:Bvw}
|\langle B(v,w),v\rangle| \le c \|w\|_{H^1}\|v\| \|\nabla^hv\|
\le\delta\|\nabla^hv\|^2+c_{\delta}\|v\|^2.
\end{equation}
The second term is treated by a horizontal H\"older with exponents $4,2,4$ followed by a vertical Cauchy-Schwarz and an application of \eqref{eq:ladyzh}:
\begin{equation}\label{eq:BBw}
\begin{aligned}
|\langle B(\BB,w),v\rangle|
&\le c\|\mathcal M^h\|_{L_z^2}
\|w\|_{L_h^4}\|\nabla^hw\|_{L_h^2}\|v\|_{L_z^2L_h^4}\\
&\le c(\varepsilon\nu)^{1/4}(\|v\|\|\nabla^hv\|)^{\frac{1}{2}}\le c(\varepsilon\nu)^{1/4}(\|v\| + \|\nabla^hv\|)\\
&\le \delta \|\nabla^h v\|^2 + \|v\|^2 + c_\delta (\varepsilon \nu)^{\frac{1}{2}}.
\end{aligned}
\end{equation}
For the next term, we expand $B(u,\BB)=B(v,\BB)+B(w,\BB)+B(\BB,\BB)$ and estimate each separately. Starting with $B(v,\BB)$, we decompose it further to look at different horizontal and vertical parts separately: 
\begin{align*}
\langle B(v,\BB),v\rangle
=&\langle v^h\cdot\nabla^h\BB^h,v^h\rangle
+\langle v^h\cdot\nabla^h\BB^3,v^3\rangle\\
&+\langle v^3\partial_3\BB^h,v^h\rangle
+\langle v^3\partial_3\BB^3,v^3\rangle
\end{align*}
For the first term, we apply \eqref{aux-est2}, as $\| \mathcal{M}^h\|_{L^\infty_z} \le c$, followed by Young's inequality:
\begin{align*}
    \left|\langle v^h\cdot\nabla^h\BB^h,v^h\rangle \right| \le c \|w\|_{H^1} \| v \| \| \nabla^h v\| \le \delta \| \nabla^h v\|^2 + c_\delta \|v\|^2.
\end{align*}
We do the same for the next term, obtaining
\begin{align*}
    \left| \langle v^h\cdot\nabla^h\BB^3,v^3\rangle\right| &\le c(\varepsilon\nu)^{1/2}\|w\|_{H^2}\|v\| \|\nabla^hv\| \le \delta\|\nabla^hv\|^2+
 c_\delta \varepsilon \nu \|w\|_{H^2}^2\|v\|^2.
\end{align*}
For the third term, we start again with a horizontal H\"older
\begin{equation*}
    \left| \langle v^3\partial_3\BB^h,v^h\rangle \right| \le c \int_{[0,1]} \|v^3(\cdot,z)\|_{L^2_h} |\partial_3 \mathcal{M}^h| \|v^h(\cdot,z)\|_{L^4_h} dz.
\end{equation*}
Now, we inspect $\|v^3(\cdot,z)\|_{L^2_h}$ for $z\leq \frac{1}{2}$, the case $z\geq \frac{1}{2}$ is symmetric. Note that, due to the divergence free condition we have:
\begin{align*}
    \|v^3(\cdot,z)\|_{L^2_h} &= \norm{ \int_0^z \partial_3 v^3(\cdot, z') dz' }_{L^2_h} = \norm{ \int_0^z \divh v^h(\cdot, z') dz' }_{L^2_h} \\
    &\le \int_0^z \norm{\nabla^h v(\cdot, z')}_{L^2_h} dz'\le \sqrt{d(z)}\|\nabla^hv\|
\end{align*}
where in the second line we have used the Minkowski integral inequality, followed by Cauchy-Schwarz. Returning to the term at hand, we have applying Cauchy-Schwarz in $z$ twice:
\begin{align*}
    \left| \langle v^3\partial_3\BB^h,v^h\rangle \right| &\le c \|\nabla^hv\|\int_{[0,1]} \sqrt{d(z)} |\partial_3 \mathcal{M}^h| \|v^h(\cdot,z)\|_{L^4_h} dz \\
    &\le c \|\nabla^hv\| \|v\|_{L^2_zL^4_h} \left( \int_0^1 d(z) |\partial_3\mathcal{M}^h|^2 dz \right)^{\frac{1}{2}}\\
    &\le c \|\nabla^hv\| \|v\|_{L^2_zL^4_h} (\|\partial_3\mathcal{M}^h\|_{L^2_z}\|d\partial_3\mathcal{M}^h\|_{L^2_z} )^{\frac{1}{2}} \\
    &\le c \|\nabla^hv\| \|v\|_{L^2_zL^4_h} \le\  c \|\nabla^hv\|^{\frac{3}{2}} \|v\|^{\frac{1}{2}} \le \delta \| \nabla^h v\|^2 + c_\delta \|v\|^2,
\end{align*}
where the last estimate follows by Young's inequality with exponents $4, \frac{4}{3}$.
\noindent
For the last term, we have from \eqref{aux-est2}:
\begin{equation*}
    \left| \langle v^3\partial_3\BB^3,v^3\rangle \right| \le c \|v\|\|\nabla^h v\| \le \delta \| \nabla^h v\|^2 + c_\delta \|v\|^2,
\end{equation*}
as $\|\partial_3 \mathcal{M}^z\|_{L^\infty_z}\le c$ and $\|\curl w \|_{L^2_h}\le c$.
Collecting the above, we have obtained
\begin{equation}\label{eq:BvB}
    |\langle B(v,\BB),v\rangle| \le 4\delta\|\nabla^h v\|^2 + c_\delta\|v\|^2 + c_\delta \varepsilon\nu \|w\|_{H^2}^2\|v\|^2.
\end{equation}
For \(B(w,\BB)\), horizontal H\"older with exponents \(4,2,4\) followed by vertical Cauchy-Schwarz gives along with the corrector bounds \eqref{eq:matrix}:
\begin{equation*}
|\langle B(w,\BB),v\rangle|
\le c(\varepsilon\nu)^{1/4}\|w\|_{H^2}
 \|v\|^\frac{1}{2}\|\nabla^hv\|^\frac{1}{2}.
\end{equation*}
Two applications of Young's inequality give
\begin{equation}\label{eq:BwB}
    |\langle B(w,\BB),v\rangle| \le \delta \|\nabla^h v\|^2 + c_\delta(\varepsilon \nu)^{\frac{1}{2}}\|w\|_{H^2}^2 + c\|v\|^2.
\end{equation}
We finally move to the $B(\BB,\BB)$ term, which we decompose into $$\langle B(\BB,\BB), v\rangle = \langle \BB^h \cdot \nabla^h \BB, v\rangle + \langle \BB^3 \partial_3 \BB^h, v^h \rangle + \langle \BB^3 \partial_3 \BB^3, v^3 \rangle.$$
The first term of the three is bounded in exactly the same way as $\langle B(w,\BB),v\rangle$ bounding $\mathcal{M}^h$ uniformly, so
$$|\langle \BB^h \cdot \nabla^h \BB, v\rangle| \le \delta \|\nabla^h v\|^2 + c_\delta(\varepsilon \nu)^{\frac{1}{2}}\|w\|_{H^2}^2 + c\|v\|^2.$$
For the remaining two terms, we have:
$$
\begin{aligned}
|\langle\BB^3\partial_3\BB^h,v^h\rangle|
&\le c\|\mathcal M^z\|_{L_z^\infty}
\|\partial_3\mathcal M^h\|_{L_z^2}
\|\curl w\|_{L_h^2}\|w\|_{L_h^4}\|v\|_{L_z^2L_h^4}\\
&\le c(\varepsilon\nu)^{1/4}(\|v\|+\|\nabla^hv\|),\\
|\langle\BB^3\partial_3\BB^3,v^3\rangle|
&\le\|\mathcal M^z\|_{L_z^2}
\|\partial_3\mathcal M^z\|_{L_z^\infty}
\|\curl w\|_{L_h^4}^2\|v\|\\
&\le c(\varepsilon\nu)^{1/4}\|w\|_{H^2}\|v\|,
\end{aligned}
$$
where we have used the $2$-d Ladyzhenskaya inequality in the last line. Applying Young's inequality as before and collecting the above we have:
\begin{equation}\label{eq:BBB}
    |\langle B(\BB,\BB),v\rangle| \le \delta \|\nabla^h v\|^2 + c_\delta(\varepsilon \nu)^{\frac{1}{2}}(\|w\|_{H^2}^2 +1)+ c\|v\|^2.
\end{equation}
Next we treat the last of the nonlinear terms, along with the $\BB$ damping term. For this purpose, we will appeal to the crude bound $\|B(w,w)\|_{L^2_h} \le c \|w\|_{L^4_h} \|\nabla^hw\|_{L^4_h} \le c\|w\|_{H^2},$ and the estimate \eqref{eq:Bdual}:
\begin{align}\label{eq:Bnonlinear}
\left|\left\langle\BB[B(w,w)]+
\sqrt{2\nu/\varepsilon}\,\BB,v\right\rangle\right|
&\le c(\varepsilon\nu)^{1/4}(1+\|w\|_{H^2})
 (\|v\|+\|\nabla^hv\|) \nonumber\\
 & \le \delta \|\nabla^h v\|^2 + c_\delta(\varepsilon \nu)^{\frac{1}{2}}(\|w\|_{H^2}^2 +1)+ c\|v\|^2,
\end{align}
where the last line is just an application of Young's inequality.\\

\noindent We now deal with the remaining deterministic terms, in which we have to exploit the corrector cancellations \eqref{eq:profiles}.\\

\noindent By \eqref{eq:profiles}, the horizontal contributions of \(\BB_1\) and
\(\BB_2\) cancel exactly against rotation and
\(\sqrt{2\nu/\varepsilon}\,w\). The remaining horizontal terms are
\begin{align*}
    \left|\frac{2}{\varepsilon}\langle e_3 \wedge \BB_3^h, v\rangle \right| + \left|\frac{2}{\varepsilon}\langle e_3 \wedge \BB_4^h, v\rangle \right| &+ \left|\nu\langle A_z \BB_4^h, v\rangle \right| 
    \le c\left( \varepsilon^{-1}\| \BB_3[w]^h\| + \varepsilon^{-1}\| \BB_4[w]^h\| + \nu \| \partial^2_3 \BB_4[w]^h\| \right)\|v\|\\
    &\le c\left[
    \varepsilon^{-1}e^{-1/\sqrt{2\varepsilon\nu}}
    +\nu(\varepsilon\nu)^{-1/4}
    +\varepsilon^{-1}(\varepsilon\nu)^{3/4}
    +\nu \right]\|w\| \|v\|.
\end{align*}
We further note that due to $\frac{\nu}{\varepsilon}$ being bounded, we have $\varepsilon^{-1}e^{-1/\sqrt{2\varepsilon\nu}}
\le c(\varepsilon\nu)^{-1/2}
e^{-1/\sqrt{2\varepsilon\nu}}
\le c(\varepsilon\nu)^{1/4},$ as well as $\nu\le c(\varepsilon\nu)^{1/2}
\le c(\varepsilon\nu)^{1/4}$. Combining with the above, we have:
$$\left|\frac{2}{\varepsilon}\langle e_3 \wedge \BB_3^h, v\rangle \right| + \left|\frac{2}{\varepsilon}\langle e_3 \wedge \BB_4^h, v\rangle \right| + \left|\nu\langle A_z \BB_4^h, v\rangle \right| \le c (\varepsilon \nu)^{\frac{1}{4}}\|v\|.$$
For the vertical component, integration by parts,
\eqref{eq:B3derivative}, and \(\partial_3v^3=-\divh v^h\) give
\[
|\nu\langle\partial_3^2\BB^3,v^3\rangle|
\le\nu\|\partial_3\BB^3\|\|\partial_3v^3\|
\le c\nu(\varepsilon\nu)^{1/4}\|w\|_{H^1}\|\nabla^hv\|.
\]
We have thus proved
\begin{equation}\label{eq:Ekman-residual}
\begin{aligned}
&\left|\left\langle
\nu\PP\partial_3^2\BB
-\frac1\varepsilon\PP(e_3\wedge\BB)
+\sqrt{\frac{2\nu}{\varepsilon}}w,v
\right\rangle\right|\\
&\quad\le c(\varepsilon\nu)^{1/4}\|v\|
+c\nu(\varepsilon\nu)^{1/4}\|\nabla^hv\|\\
&\quad\le\delta\|\nabla^hv\|^2
+c\|v\|^2+c_{\delta}(\varepsilon\nu)^{1/2}.
\end{aligned}
\end{equation}
Collecting all the bounds on the deterministic terms (and making $\delta$ smaller by a factor of $100$), we see that they're bounded by an expression:
\begin{equation*}
\delta\|\nabla^hv\|^2+
 c_{\delta}(1+\|w\|_{H^2}^2)\|v\|^2+
 c_{\delta}(\varepsilon\nu)^{1/2}(1+\|w\|_{H^2}^2).
\end{equation*}
Thus, our energy inequality now takes the form:
\begin{align*}
    d\|v\|^2\le
    &-2 \|\nabla^hv\|^2 dt -2\nu \|\partial_3v\|^2 dt \\
    &+\sum_i \left( \langle\PP\Gt_iv,\Gt_i^*v\rangle dt+\langle\PP\Gt_i\BB,\Gt_i^*v\rangle dt-\langle\BB[\PP(\GG_i^h)^2w],v\rangle dt  \right) \\
    &+\sum_i\|\PP\Gt_iv+\PP\Gt_i\BB-\BB[\PP\GG_i^hw]\|^2 dt  \\
    &-2 \sum_i\langle\Gt_iv+\Gt_i\BB-\BB[\PP\GG_i^hw],v\rangle dW^i \nonumber\\
     &+(\delta\|\nabla^hv\|^2+
 c_{\delta}(1+\|w\|_{H^2}^2)\|v\|^2+
 c_{\delta}(\varepsilon\nu)^{1/2}(1+\|w\|_{H^2}^2)) dt\\
    &+ \sum_{i=1}^4 R_i \, dt + R_5 d\mathcal{W}_t.
\end{align*}
We now move to estimating the terms coming from the It\^o-Stratonovich corrector along with the quadratic variation terms.
Expanding the square and grouping terms, we have to estimate the following:
\begin{align*}
    &\sum_i\bigl(\langle\PP\Gt_iv,\Gt_i^*v\rangle+
    \|\PP\Gt_iv\|^2\bigr)\nonumber \\
    &-\sum_i\langle\BB[\PP(\GG_i^h)^2w],v\rangle\nonumber\\
    &+\sum_i\langle\PP\Gt_i\BB,\Gt_i^*v\rangle
    +2\sum_i\langle\PP\Gt_iv,
     \PP\Gt_i\BB-\BB[\PP\GG_i^hw]\rangle
    +\sum_i\|\PP\Gt_i\BB-\BB[\PP\GG_i^hw]\|^2.
\end{align*}
For the first term, we use \eqref{bigbound1NEW}:
$$\sum_i\bigl(\langle\PP\Gt_iv,\Gt_i^*v\rangle+
    \|\PP\Gt_iv\|^2\bigr) \le \delta(\|\nabla^hv\|^2+\nu\|\partial_3v\|^2)+c_\delta\|v\|^2,$$ as $\nu^{2\alpha}\le \nu$.

\noindent For the second term, we use the bound \eqref{eq:Bdual}, along with the standard estimates on $\Gt$, to get
$$\begin{aligned}
\sum_i|\langle\BB[\PP(\GG_i^h)^2w],v\rangle|
&\le \sum_i c(\varepsilon\nu)^{1/4}\|\PP(\GG_i^h)^2w\|_{L^2_h}
 (\|v\|+\|\nabla^hv\|)\\
&\le c(\varepsilon\nu)^{1/4}\|w\|_{H^2}
 (\|v\|+\|\nabla^hv\|)\\
&\le\delta\|\nabla^hv\|^2+c_\delta\|v\|^2
 +c_\delta(\varepsilon\nu)^{1/2}\|w\|_{H^2}^2.
\end{aligned}$$
Finally for the last line, we use the bounds \eqref{eq:Bnorm} along with Lemma \ref{bounds on split noise} to get:
\begin{align}
   \label{halfstuff} \|\PP\Gt_i\BB\| 
    &\le c\|\xi_i\|_{W^{1,\infty}}
    \bigl((\varepsilon\nu)^{1/4}\|w\|_{H^2}
    +\nu^\alpha(\varepsilon\nu)^{-1/4}\|w\|_{H^1}\bigr)\\ \nonumber
    &\le c\|\xi_i\|_{W^{1,\infty}}
    \bigl((\varepsilon\nu)^{1/4}\|w\|_{H^2}
    +\nu^{\alpha-\frac{1}{2}}\bigr)\\ \nonumber
    \|\BB[\PP\GG_i^hw]\| 
    &\le c\|\xi_i\|_{W^{1,\infty}}(\varepsilon\nu)^{1/4}\|w\|_{H^2}.
\end{align}
For the first two terms in the last line, we apply Cauchy-Schwarz to bound those terms by
$$c\bigl((\varepsilon\nu)^{1/4}\|w\|_{H^2}
+\nu^{\alpha-\frac{1}{2}}\bigr)
\bigl(\|\nabla^hv\|+\nu^\alpha\|\partial_3v\|+\|v\|\bigr) \le \delta(\|\nabla^hv\|^2 +\nu\|\partial_3v\|^2) +c_\delta(\|v\|^2
 +(\varepsilon\nu)^{1/2}\|w\|_{H^2}^2 + \nu^{2\alpha-1}).$$
Finally, the last term is handled similarly, and bounded by
\begin{equation*}
\begin{aligned}
\sum_i\bigl(\|\Gt_i\BB\|^2+\|\BB[\PP\GG_i^hw]\|^2\bigr)
&\le c\bigl((\varepsilon\nu)^{1/2}\|w\|_{H^2}^2
+\nu^{2\alpha-1}\bigr).
\end{aligned}
\end{equation*}
Collecting all the terms, we get that the It\^o-Stratonovich correction terms and quadratic variation terms are bounded by 
\begin{equation}\label{eq:noise-total}
\begin{aligned}
&\delta(\|\nabla^hv\|^2+\nu\|\partial_3v\|^2)+c_\delta\|v\|^2+c_\delta\bigl((\varepsilon\nu)^{1/2}\|w\|_{H^2}^2
+\nu^{2\alpha-1}\bigr)
\end{aligned}
\end{equation}
up to again changing $\delta$ by a factor. Updating our energy inequality, we now have
\begin{align}\label{eq:EE-last}
    d\|v\|^2\le
    &-2 \|\nabla^hv\|^2 dt\nonumber -2\nu \|\partial_3v\|^2 dt\\
    &-2 \sum_i\langle\Gt_iv+\Gt_i\BB-\BB[\PP\GG_i^hw],v\rangle dW^i \nonumber\\
     &+(\delta(\|\nabla^hv\|^2 + \nu\|\partial_3 v\|^2)+
 c_{\delta}(1+\|w\|_{H^2}^2)\|v\|^2+
 c_{\delta}(\varepsilon\nu)^{1/2}(1+\|w\|_{H^2}^2) + c_\delta \nu^{2\alpha -1})dt \nonumber\\
    &+ \sum_{i=1}^4 R_i \, dt + R_5 d\mathcal{W}_t.
\end{align}
We therefore move to bounding the stochastic integral using the BDG inequality. Anticipating the use of stochastic Gr\"onwall, we carry out the estimate between two arbitrary stopping times $0\le \theta \le \theta' \le T.$ Using the estimates appearing in the step above to control the quadratic variation term, along with \eqref{bigbound2NEW}, we have
\begin{equation*}
    \sum_i|\langle\Gt_iv+\Gt_i\BB-\BB[\PP\GG_i^hw],v\rangle|^2
    \le c\|v\|^4+
    c\bigl((\varepsilon\nu)^{1/2}\|w\|_{H^2}^2
    +\nu^{2\alpha - 1}\bigr)\|v\|^2.
\end{equation*}
The BDG inequality therefore gives
\begin{align*}
&\E\left[ \sup_{\theta\le r\le\theta'}\left|
2\sum_i\int_\theta^r
\langle\Gt_iv+\Gt_i\BB-\BB[\PP\GG_i^hw],v\rangle\,dW_s^{i}
\right| \right]\nonumber\\
&\quad\le c\E\left[\left(\sup_{\theta\le r\le\theta'}\|v_r\|^2\right)^{1/2}
 \left\{\int_\theta^{\theta'}\|v_s\|^2\,ds
 +c\bigl((\varepsilon\nu)^{1/2}
 +\nu^{2\alpha -1}\bigr)\right\}^{1/2}\right]
\nonumber\\
&\quad\le \delta \E\left[ \sup_{\theta\le r\le\theta'}\|v_r\|^2 \right]
 +c_\delta\E\left[ \int_\theta^{\theta'}\|v_s\|^2\,ds \right]
 +c_{\delta}\bigl((\varepsilon\nu)^{1/2}
 +\nu^{2\alpha-1}\bigr).
\end{align*}
Taking the supremum over $[\theta,\theta']$ and expectations in the inequality \eqref{eq:EE-last}, and taking $\delta\le \frac{1}{2}$ we have
\begin{align} \label{eq:EE-pre-G}
     \E\left[ \sup_{\theta\le r\le\theta'}\|v_r\|^2\right] &+ \E \left[ \int_\theta^{\theta'} (\|\nabla^hv_s\|^2 + \nu\|\partial_3 v_s\|^2) \, ds \right] \le \nonumber\\ 
     &\le c\E \left[ \|v_\theta\|^2 \right] + c\E \left[\sum_{i=1}^4 \int_0^T |R_i|\, ds \right]+ c\E \left[ \sup_{s \in [0,T]} \left| \int_0^sR_5 d \mathcal
     {W}_s\right|\right] + c_\delta((\varepsilon \nu)^{\frac{1}{2}} + \nu^{2\alpha-1})\nonumber\\
     &+ c_\delta \E\left[ \int_\theta^{\theta'}(1+\|w_s\|^2_{H^2})\|v_s\|^2\,ds\right],
\end{align}
where we have bounded $\E \int_\theta^{\theta'} \|w_s\|^2_{H^2} \, ds \le \E \int_0^T  \|w_s\|^2_{H^2} \, ds \le c.$
Let us now deal with the $R_i$ terms. We keep in mind, that we want those contributions to vanish as $n\to \infty$ while keeping $k,m$ fixed. 
\begin{align*}
    |R_1| \le c |\langle B(u,u),(I-\PP_n)(w+\BB[w])\rangle| &\le c \|u\|_{L^3}\|\nabla u\|
             \|(I-\PP_n)(w+\BB[w])\|_{L^6}\\
    &\le c\|u\|^{1/2}\|u\|_{H^1}^{3/2}
             \|(I-\PP_n)(w+\BB[w])\|_{H^1},
\end{align*}
where we used interpolation of \(L^3\) between \(L^2\) and \(L^6\), and a Sobolev embedding. We further note that
\begin{align*}
    \int_0^{T}\|u_n\|^{1/2}\|u_n\|_{H^1}^{3/2}\,ds
    &\le T^{1/4}(\sup_{s\le T}\|u_n(s)\|^2)^{1/4}
    \left(\int_0^T\|u_n\|_{H^1}^2\,ds\right)^{3/4}\\
    &\le c_T\left(\sup_{s\le T}\|u_n(s)\|^2+
    \int_0^T\|u_n\|_{H^1}^2\,ds\right),
\end{align*}
where we applied H\"older's inequality in the first line, and Young's in the second. Altogether, applying Cauchy-Schwarz and exploiting the additional probabilistic $L^4$ bound from \ref{item2.5}, we have 
\begin{equation}\label{eq:R1}
    \E \left[ \int_0^T |R_1|\,ds \right] \le c_k \left(\E\left[ \sup_{s\le T}
\|(I-\PP_n)(w_s+\BB[w_s])\|_{H^1}^2 \right]\right)^{1/2}.
\end{equation}
The terms $R_2$ and $R_3$ are bounded using simpler combination of the above tools, so we skip the details and simply give
\begin{equation}\label{eq:R2}
     \E \left[ \int_0^T |R_2|\,ds \right]  \le c \varepsilon^{-1}  \left(\E \left[ \sup_{s\le T}
\|(I-\PP_n)(w_s+\BB[w_s])\|_{L^2}^2\right]\right)^{1/2},
\end{equation}
as well as
\begin{equation}\label{eq:R3}
     \E \left[ \int_0^T |R_3|\,ds \right]  \le c \left(\E\left[ \sup_{s\le T}
\|(I-\PP_n)(w_s+\BB[w_s])\|_{H^1}^2 \right] \right)^{1/2}.
\end{equation}
We keep 
\begin{equation}
     \E \left[ \int_0^T |R_4|\,ds \right]= \E \left[ \int_0^T \sum_i \|(I-\PP_n)(\PP\GG_i^hw_s+\BB[\PP\GG_i^hw_s])\|^2 ds\right] .
\end{equation}
For the final term we apply the BDG inequality using the bound $$\sum_i|\langle\Gt_iu,(I-\PP_n)(w+\BB[w])\rangle|^2
\le c\|u\|^2\|(I-\PP_n)(w+\BB[w])\|_{H^1}^2.$$
We have:
\begin{align} \label{eq:R5}
    \E \left[ \sup_{s\in [0,T]} \left| \int_0^s R_5 d\mathcal{W}_s \right| \right] &\le c \E \left[ \left( \int_0^T\|u_s\|^2\|(I-\PP_n)(w_s+\BB[w_s])\|_{H^1}^2ds \right)^{\frac{1}{2}}\right] \nonumber\\
    & \le c \left(\E\left[ \sup_{s\le T}
    \|(I-\PP_n)(w_s+\BB[w_s])\|_{H^1}^2 \right]\right)^{1/2}.
\end{align}
Collecting \eqref{eq:R1}-\eqref{eq:R5}, we arrive at
\begin{align*}
    \E \left[ \sum_{i=1}^4 \int_0^T |R_i|\, ds \right]+ \E \left[ \sup_{s \in [0,T]} \left| \int_0^sR_5 d \mathcal
     {W}_s\right|\right] &\le c_k \left(\E \left[ \sup_{s\le T}
    \|(I-\PP_n)(w_s+\BB[w_s])\|_{H^1}^2\right]\right)^{1/2} +\\
    &+ \E \left[ \int_0^T \sum_i \|(I-\PP_n)(\PP\GG_i^hw_s+\BB[\PP\GG_i^hw_s])\|^2 ds\right].
\end{align*}
We will now argue that the right hand side goes to zero, as $n \to \infty$, for $k,m$ fixed. To this end, we note that $I-\PP_n$ converges strongly (in the operator sense) to $0$ over both $L^2_\sigma$ and $H^1$. We now note, that for a fixed function, $f\in C([0,T]; H^2)$, we have that the image of $[0,T]$ under $t\to f_t + \BB[f_t]$ is compact in $H^1$, and its image under $t \to  \PP\GG_i^hf_t+\BB[\PP\GG_i^hf_t]$ is compact in $ L^2_\sigma$ for each $i$. Thus, as pointwise convergence is uniform on compact sets, we have 
$$\sup_{t\le T}\|(I-\PP_n)(f_t+\BB[f_t])\|_{H^1}^2
\to 0, \text{ as $n \to \infty$},$$
and 
$$\sup_{t\le T} \sum_i \|(I-\PP_n)( \PP\GG_i^hf_t+\BB[\PP\GG_i^hf_t])\|_{L^2}^2
\to 0, \text{ as $n \to \infty$},$$
where taking the sum over $i$ is allowed, thanks to the assumptions on the $\xi_i$'s.
We further note, that using the corrector bounds \eqref{eq:Bnorm}
we have 
\begin{align*}
    \sup_{t\le T}\|(I-\PP_n)(f_t+\BB[f_t])\|_{H^1}^2 & \le c_{\varepsilon,\nu} \sup_{t\in[0,T]}\| f_t\|_{H^2}^2, \\
    \sup_{t\le T}\sum_i \|(I-\PP_n)( \PP\GG_i^hf_t+\BB[\PP\GG_i^hf_t])\|_{L^2}^2& \le c_{\varepsilon,\nu} \sup_{t\in[0,T]}\| f_t\|_{H^2}^2.
\end{align*}
We finally note, that the law of $w=w^{k,m,n}$ does not depend on $n$, and hence we can take the expectation with respect to that law (in particular by replacing $f$ by $w^{k,m}$), and pass the $n \to \infty$ limit through the expectation by the DCT. This application of the DCT is permitted, due to the additional regularity in the initial data, in particular for a fixed $k,m$, lemma \ref{uniform bounds on w} shows that $\E\left[ \sup_{t\in[0,T]}\| w^{k,m}_t\|_{H^2}^2\right] < \infty$. We summarise the above in writing: 
\begin{equation}
    \E\left[ \sum_{i=1}^4 \int_0^T |R_i|\, ds \right]+ \E \left[ \sup_{s \in [0,T]} \left| \int_0^sR_5 d \mathcal
     {W}_s\right| \right]\le o_n^{k,m},
\end{equation}
where $o_n^{k,m} \to 0$ as $n\to \infty$ for fixed $k,m$.

\noindent Coming back to \eqref{eq:EE-pre-G}, we have:
\begin{align} \label{eq:EE-Gready}
     \E\left[ \sup_{\theta\le r\le\theta'}\|v_r\|^2\right] + \E \left[ \int_\theta^{\theta'} (\|\nabla^hv_s\|^2 + \nu\|\partial_3 v_s\|^2) \, ds\right] &\le \nonumber c\E \left[ \|v_\theta\|^2\right] + o_n^{k,m} + c_\delta((\varepsilon \nu)^{\frac{1}{2}} + \nu^{2\alpha-1})\nonumber\\
     &+ c_\delta \E \left[ \int_\theta^{\theta'}(1+\|w_s\|^2_{H^2})\|v_s\|^2\,ds\right] .
\end{align}
Seeing that $$\int_0^T(1+\|w_s\|_{H^2}^2)\,ds\le c$$ with a deterministic constant, we apply the Stochastic Gr\"onwall lemma to obtain
\begin{equation}\label{eq:gronny-bound}
     \E\left[ \sup_{r\in [0,T]}\|v_r\|^2\right] + \E \left[ \int_0^{T} (\|\nabla^hv_s\|^2 + \nu\|\partial_3 v_s\|^2) \, ds \right]\le c_{\delta}\left( \E \left[ \|v_0\|^2\right] + o_n^{k,m} + c_\delta((\varepsilon \nu)^{\frac{1}{2}} + \nu^{2\alpha-1}) \right).
\end{equation}
Coming back to the beginning of the proof, denoting $v^{k,m,n} := u^{k,n} - w^{k,m,n} - \BB[w^{k,m,n}]$, we have
\begin{align*}
    \lim_{k 
       \rightarrow \infty} &\mathbbm{E}\left[\norm{u^k - w}_{L^\infty([0,T];L^2_{\sigma})}^2 \right] 
       \leq 4\lim_{k 
       \rightarrow \infty}\lim_{m \rightarrow \infty}\liminf_{n \rightarrow \infty}\mathbbm{E}\left[\norm{u^{k,n} - w^{k,m,n} }_{L^\infty([0,T];L^2_{\sigma})}^2\right] \\
       &\le c\lim_{k 
       \rightarrow \infty}\lim_{m \rightarrow \infty}\liminf_{n \rightarrow \infty}\mathbbm{E}\left[\norm{v^{k,m,n}}_{L^\infty([0,T];L^2_{\sigma})}^2 + \norm{\BB[w^{k,m,n}]}_{L^\infty([0,T];L^2_{\sigma})}^2\right] \\
       &\le c_\delta \lim_{k 
       \rightarrow \infty}\lim_{m \rightarrow \infty}\liminf_{n \rightarrow \infty} \left( \E \left[ \|u_0^{n}-w_0^{m} - \BB[w_0^{m}]\|^2 \right] + o_n^{k,m} +(\varepsilon_k \nu_k)^{\frac{1}{2}} + \nu_k^{2\alpha-1} \right) \\
       &+  \lim_{k 
       \rightarrow \infty}\lim_{m \rightarrow \infty}\liminf_{n \rightarrow \infty} (\varepsilon_k \nu_k)^{\frac{1}{2}}\E\left[  \|w^{k,m,n}\|_{L^\infty([0,T];H^1)}^2\right]\\
       &\le c_\delta \lim_{k 
       \rightarrow \infty}\lim_{m \rightarrow \infty}\left( \E \left[ \|u_0-w_0^{m} - \BB[w_0^{m}]\|^2 \right] +(\varepsilon_k \nu_k)^{\frac{1}{2}} + \nu_k^{2\alpha-1} \right) \\
       &\le  c_\delta \lim_{k 
       \rightarrow \infty}\lim_{m \rightarrow \infty}\left( \E \left[ \|u_0-w_0^{m}\|^2 \right] +(\varepsilon_k \nu_k)^{\frac{1}{2}} + \nu_k^{2\alpha-1} \right) \\
       &\le  c_\delta \lim_{k 
       \rightarrow \infty}\left((\varepsilon_k \nu_k)^{\frac{1}{2}} + \nu_k^{2\alpha-1} \right) =0,
\end{align*}
where we have used the energy estimate \eqref{eq:gronny-bound} in line three and the bound \eqref{eq:Bnorm} in line four and six.

\end{proof}

As a closing remark, we comment on the case $ \alpha = \frac{1}{2}$ and $\beta = 0$ which can be treated with the same method. The necessity of $\frac{1}{2} < \alpha$ appeared in the proof in the control of $\norm{\tilde{\mathcal{G}}_i\mathcal{B}}$, for example in (\ref{halfstuff}). The key point is that the quantity $\nu^{\alpha}(\varepsilon \nu)^{-\frac{1}{4}}$ must approach zero. We rewrite this as
$$\nu^{\alpha - \frac{1}{2}}\left(\frac{\nu}{\varepsilon}\right)^{\frac{1}{4}}.$$
In the proof, we use that $\frac{\nu}{\varepsilon}$ is bounded and that $\nu^{\alpha - \frac{1}{2}}$ approaches zero because $\frac{1}{2} < \alpha$. In the case $\beta = 0$, then $\alpha = \frac{1}{2}$ is permitted because $\frac{\nu}{\varepsilon}$ tends to zero. The rest of the proof is identical.\\

\textbf{Acknowledgements:} We would like to thank Franco Flandoli for generous and fruitful discussions on this problem.\\

\textbf{Competing Interests:} The authors report that there are no competing interests to declare.\\

\textbf{Data Availability Statement:} There is no associated data.

\addcontentsline{toc}{section}{References} 
\bibliographystyle{newthing}
\bibliography{Biblio}

\end{document}